\definecolor{green}{RGB}{0,127,0}
\definecolor{red}{RGB}{191,0,0}
\theoremstyle{plain}
\newtheorem{lemma}{Lemma}[section]
\newtheorem{theorem}[lemma]{Theorem}
\newtheorem{corollary}[lemma]{Corollary}
\newtheorem{proposition}[lemma]{Proposition}
\newtheorem{question}[lemma]{Question}
\newtheorem{conjecture}[lemma]{Conjecture}
\theoremstyle{remark}
\newtheorem*{remark}{Remark}
\newtheorem{definition}[lemma]{Definition}
\newcommand{\N}{\mathbb{N}}
\newcommand{\Sym}[1]{\mathfrak{S}_{#1}}
\newcommand{\M}{\mathcal{M}}
\newcommand{\QQ}{\mathbb{Q}}
\newcommand{\OOO}{\mathcal{O}}
\newcommand{\xx}{\bm{x}}
\newcommand{\qq}{\mathfrak{q}}
\newcommand{\Y}{\mathcal{Y}}
\def\la{\lambda}
\def\a{\alpha}
\def\si{\sigma}
\newcommand{\yy}{\bm{y}}
\newcommand{\zz}{\bm{z}}
\DeclareMathOperator{\Sp}{Sp}
\DeclareMathOperator{\Symm}{Sym}
\author[M.~Dołęga]{Maciej Dołęga}
\address{
Wydział Matematyki i Informatyki, 
Uniwersytet im.~Adama Mickiewicza, 
Collegium Mathematicum,
Umultowska 87, 
61-614 Poznań, 
Poland, \newline \indent Instytut Matematyczny,
Uniwersytet Wrocławski,  \mbox{pl.\ Grunwaldzki~2/4,} 50-384
Wrocław, Poland}
\email{maciej.dolega@amu.edu.pl}
 \thanks{
MD is supported from {\it NCN}, grant UMO-2015/16/S/ST1/00420.}
\keywords{Map enumeration; Jack symmetric functions; $b$-conjecture}
\subjclass[2010]{Primary 05C10; Secondary 05E05, 05C30,
20C30}
\title[$b$-conjecture for one-face maps]{Top degree part in $b$-conjecture for unicellular bipartite maps}
\begin{document}

\maketitle

\begin{abstract}
Goulden and Jackson (1996) introduced, using Jack symmetric functions, some multivariate generating series $\psi(\bm{x}, \bm{y}, \bm{z}; 1, 1+\beta)$ with an additional parameter $\beta$ that may be interpreted as a continuous deformation of the rooted bipartite maps generating series.
Indeed, it has the property that for $\beta \in \{0,1\}$,
it specializes to the rooted, orientable (general, i.e.~orientable or not, respectively) bipartite maps generating series.
They made the following conjecture:
coefficients of $\psi$ are polynomials in $\beta$ with positive integer coefficients that can be written as a multivariate generating series of rooted, general bipartite maps,
where the exponent of $\beta$ is an integer-valued statistic
that in some sense ``measures the non-orientability'' of
the corresponding bipartite map.

We show that except for two special values of $\beta = 0,1$ for which the
combinatorial interpretation of the coefficients of $\psi$ is known,
there exists a third special value $\beta = -1$ for which the
coefficients of  $\psi$ indexed by two partitions $\mu,\nu$, and one
partition with only one part are given by rooted, orientable bipartite
maps with arbitrary face degrees and black/white vertex degrees given
by $\mu$/$\nu$, respectively. We show that this evaluation
corresponds, up to a sign, to a top-degree part of the coefficients of
$\psi$. As a consequence, we introduce a collection of integer-valued
statistics of maps $(\eta)$ such that the top-degree of the
multivariate generating series of rooted bipartite maps with only one
face (called \emph{unicellular}) with respect to $\eta$ gives the
top-degree of the appropriate coefficients of $\psi$. Finally, we show
that the $b$-conjecture holds true for all rooted, unicellular bipartite maps of genus at most $2$.
\end{abstract}

\section{Introduction}
\subsection{Maps}

Roughly speaking, a \emph{map} is a graph drawn on a certain topological
surface such as the sphere or the Klein bottle (see Section~\ref{subsect:Maps} for precise definitions). This simple object carries both combinatorial and
geometric informations so 
it turned out that maps appear naturally in many
different contexts. In particular, they have deep connections with various branches of discrete mathematics, algebra, or physics (see e.g.~\cite{LandoZvonkin2004, Eynard:book} and references therein). 
One of the major steps in the study of maps is developing methods for
their enumeration (either by generating functions, matrix integral
techniques, algebraic combinatorics, or bijective methods) which is
now a well established domain on its own. Moreover, in many areas in
mathematics and physics map enumeration is crucial. We refer
to \cite{Schaeffer2015} -- a great
introductory text on the enumeration of maps -- which shows how studying enumerative properties of maps is of great
importance in many different contexts. 

In this paper we will focus on an interesting connection,
explored mainly by Goulden and Jackson \cite{GouldenJackson1996},
between map enumeration and symmetric functions theory. This connection led to a twenty years old open problem, called \emph{$b$-conjecture}, that will be the main subject of this paper.

\subsection{Enumeration of bipartite maps in terms of Jack polynomials}

We define a \emph{map} as a connected graph embedded into a \emph{surface} (i.~e.~compact, connected $2$-manifold without boundary) in a way that the \emph{faces} (connected components of the complement of the
graph) are simply connected. A \emph{hypermap} (and, by duality,
\emph{bipartite map}) is a face two-colored map (vertex two-colored
map), so that each edge separates faces (vertices) of different
colors. A map is rooted by distinguishing \emph{the root}, that is the
unique side and the beginning
of the selected edge. A rooted hypermap (bipartite map, by duality)
has the black root face (vertex, respectively), by convention, where root
face (vertex, respectively) is the unique face (vertex, respectively)
incident to the root. An example of a bipartite map is illustrated in~\cref{fig:MapExampleOrient}.

\newcommand{\faceA}{red!50}
\newcommand{\faceB}{blue}
\newcommand{\faceAfill}{red!10}
\newcommand{\faceBfill}{blue!20}

\begin{figure}
\centering
\begin{tikzpicture}[scale=0.6,
black/.style={circle,thick,draw=black,fill=white,inner sep=4pt},
white/.style={circle,draw=black,fill=black,inner sep=4pt},
connection/.style={draw=black,black,auto},
faceAs/.style={\faceA, dashed,  line width=6pt},
]
\scriptsize

\begin{scope}
\clip (0,0) rectangle (10,10);

\fill[pattern color=\faceAfill,pattern=north west lines] (0,0) rectangle (10,10);
\fill[pattern color=\faceBfill, pattern=north east lines] (5,5) rectangle (8,8);

\draw (2,7)  node (b1)     [black] {};
\draw (12,3) node (b1prim) [black] {};
\draw (3,2)  node (w1) [white] {};
\draw (-7,8) node (w1prim) [white] {};

\draw (5,5)  node (AA)     [black] {};
\draw (8,5)  node (BA)     [white] {};
\draw (5,8)  node (AB)     [white] {};
\draw (8,8)  node (BB)     [black] {};
\draw (3,9)  node (C)     [black] {};

\draw[connection]         (w1)      to node {$$} node [swap] {$$} (AA);
\draw[line width=2pt,-left to]         (w1)      to node [pos=0.6] {$$} node [swap,pos=0.6] {$$} (3.5,2.75);
\draw[connection]         (AA)      to node {$$} node [swap] {$$} (AB);
\draw[connection]         (AB)      to node {$$} node [swap] {$$} (BB);
\draw[connection]         (BB)      to node {$$} node [swap] {$$} (BA);
\draw[connection]         (BA)      to node {$$} node [swap] {$$} (AA);
\draw[connection]         (AB)      to node {$$} node [swap] {$$} (C);

\draw[connection]         (w1)      to node [pos=0.5] {$$} node [swap,pos=0.425] {$$} (b1prim);
\draw[connection]         (w1prim)  to node [pos=0.87] {$$} node [swap,pos=0.95] {$$} (b1);
\draw[connection]         (w1)      to node [pos=0.625] {$$}  node [swap,pos=0.5] {$$} (b1);

\end{scope}

\draw[very thick,decoration={
    markings,
    mark=at position 0.666  with {\arrow{>}}},
    postaction={decorate}]  
(0,0) -- (10,0);

\draw[very thick,decoration={
    markings,
    mark=at position 0.666  with {\arrow{>}}},
    postaction={decorate}]  
(10,10) -- (0,10);

\draw[very thick,decoration={
    markings,
    mark=at position 0.666  with {\arrow{>>}}},
    postaction={decorate}]  
(10,0) -- (10,10);

\draw[very thick,decoration={
    markings,
    mark=at position 0.666  with {\arrow{>>}}},
    postaction={decorate}]  
(0,10) -- (0,0);

\end{tikzpicture}
\caption{Example of a \emph{rooted, bipartite non-orientable map} with
  the face distribution $\tau = (12,4)$, the black vertex distribution
  $\nu = (3,3,2)$, and the white vertex distribution $\mu =
  (3,2,2,1)$. Faces are indicated by blue and red shaded regions and
  the root is indicated by a thick beginning of the selected edge with a distinguished side. The map is drawn on a projective plane: the left side of the square should be glued to the right side,
as well as bottom to top, as indicated by the arrows. }
\label{fig:MapExampleOrient}
\end{figure}
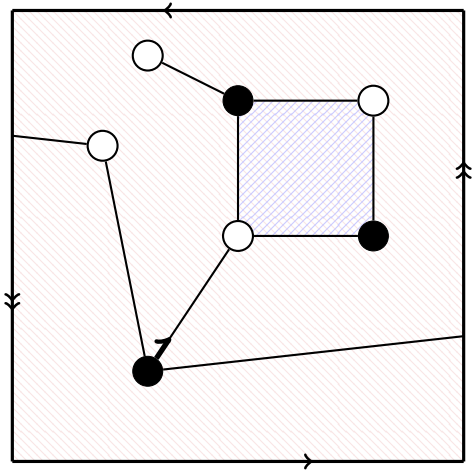

Let $m^\tau_{\mu,\nu}$ ($\widetilde{m}^\tau_{\mu,\nu}$, respectively) 
be the number of rooted hypermaps on orientable (all, respectively)
surfaces, such that $\mu$ lists the degrees of black faces, $\nu$ lists the degrees of white faces and
$\tau$ lists the degrees divided by two of vertices (since a map is face two-colored, all vertex degrees are even numbers). By duality $m^\tau_{\mu,\nu}$ ($\widetilde{m}^\tau_{\mu,\nu}$, respectively) 
is also the number of rooted bipartite maps on orientable (all, respectively)
surfaces, such that $\mu$ lists the degrees of black vertices (we say $\mu$ is a \emph{black vertex distribution}), $\nu$ lists the degrees of white vertices ($\nu$ is a \emph{white vertex distribution}) and
$\tau$ lists the degrees divided by two of faces ($\tau$ is a \emph{face distribution}). 
As standard in enumerative combinatorics,
we will consider the multivariate generating series (m.g.s, for short) for these objects:

\begin{equation}
\label{eq:OrientableGF}
M(\bm{x}, \bm{y}, \bm{z}; t) = \sum_{n \geq 1}t^n\sum_{|\tau| = |\mu|= |\nu| = n} m^\tau_{\mu,\nu} p_\tau(\bm{x}) p_\mu(\bm{y}) p_\nu(\bm{z}),
\end{equation}

\begin{equation}
\label{eq:NonOrientableGF}
\widetilde{M}(\bm{x}, \bm{y}, \bm{z}; t) = \sum_{n \geq 1}t^n\sum_{|\tau| = |\mu|= |\nu| = n} \widetilde{m}^\tau_{\mu,\nu} p_\tau(\bm{x}) p_\mu(\bm{y}) p_\nu(\bm{z}),
\end{equation}

where $p_\tau(\bm{x})$ is a \emph{power-sum symmetric function}, i.e.
\[ p_\tau(\bm{x}) = \prod_i p_{\tau_i}(\bm{x}), \qquad p_k(\bm{x}) =
x_1^k+x_2^k+\cdots \text{ for } k \geq 1,\]
and $|\mu|:=\mu_1+\mu_2+\cdots$ denotes the size of the list $\mu$.
The use of the power-sum symmetric functions as formal variables in the above m.g.s is justified by the remarkable relation, explored by Goulden and Jackson, between bipartite maps enumeration, and symmetric functions theory. Let $J_\la^{(\a)}(\xx)$ be the Jack symmetric function indexed by a partition $\la$
in the infinite alphabet $\xx$ and let $\langle \cdot,
\cdot\rangle_\a$ be the $\a$-deformation of the Hall scalar product on
the space of symmetric functions (see \cref{subsec:Jack} for a precise definition). We also use $\Y$ for the set of all integer partitions. 
Goulden and Jackson defined in their article \cite{GouldenJackson1996} a family of coefficients
$\left(h_{\mu,\nu}^{\tau}(\a-1)\right)_{\mu,\nu,\tau}$ by the following equation:
\begin{multline}
\psi(\bm{x}, \bm{y}, \bm{z}; t, \a) := \a t\frac{\partial}{\partial_t} \log \left( 
\sum_{\la \in \Y} \frac{
J_\la^{(\a)}(\xx) \, J_\la^{(\a)}(\yy) \, J_\la^{(\a)}(\zz)
\, t^{|\la|}}
{\langle J_\la, J_\la \rangle_\a}
\right) \\
 = \sum_{n \ge 1} t^n \left( 
 \sum_{\mu,\nu,\tau \vdash n} h_{\mu,\nu}^{\tau}(\a-1) \,
p_\mu(\xx) \, p_\nu(\yy) \, p_\tau(\zz) \right), 
\label{EqDefH}
\end{multline}
where $\mu,\nu,\tau \vdash n$ means that $\mu$, $\nu$ and $\tau$
are three partitions of size $n$.

This rather involved definition is motivated by the below described combinatorial
interpretations of $\psi$ for two particular values of $\a$.

\begin{theorem}[\cite{JacksonVisentin1990, GouldenJackson1996a}]
\label{theo:MapSeries}
The following equalities of the m.g.s. hold true:
\[ M(\bm{x}, \bm{y}, \bm{z}; t) = \psi(\bm{x}, \bm{y}, \bm{z}; t, 1), \qquad \widetilde{M}(\bm{x}, \bm{y}, \bm{z}; t) = \psi(\bm{x}, \bm{y}, \bm{z}; t, 2). \]
\end{theorem}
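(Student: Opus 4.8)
The plan is to reduce the two evaluations to the classical enumeration of maps by factorizations, handling the two special values in parallel but over different algebraic structures: the symmetric group $S_n$ for $\alpha=1$, and the Gelfand pair $(S_{2n},H_n)$, with $H_n$ the hyperoctahedral group, for $\alpha=2$. First I would set up the combinatorial model. A rooted bipartite map on an orientable surface with $n$ edges is encoded by a pair $(\sigma_1,\sigma_2)\in S_n\times S_n$ acting transitively on $\{1,\dots,n\}$, where the cycles of $\sigma_1$, of $\sigma_2$, and of $\sigma_1\sigma_2$ record the black vertices, the white vertices, and the faces, so that $m^\tau_{\mu,\nu}=\tfrac{1}{(n-1)!}$ times the number of transitive pairs with cycle types $\mu,\nu,\tau$; the factor $(n-1)!$ reflects that rooting by a black-incident edge-side kills the simultaneous-conjugation symmetry. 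For general (locally orientable) maps the analogous model replaces permutations by pairs of perfect matchings of $\{1,\dots,2n\}$, equivalently by elements of the double cosets $H_n\backslash S_{2n}/H_n$, which carry no orientation, and this yields $\widetilde{m}^\tau_{\mu,\nu}$.

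Next I would express the \emph{disconnected} versions of these counts through harmonic analysis. By the Frobenius formula, the number of pairs $(\sigma_1,\sigma_2)$ (not required to be transitive) with prescribed types $\mu,\nu$ and product in a fixed class $C_\tau$ equals $\tfrac{|C_\mu||C_\nu|}{n!}\sum_{\lambda\vdash n}\tfrac{\chi^\lambda_\mu\chi^\lambda_\nu\chi^\lambda_\tau}{\dim\lambda}$. In the Gelfand-pair case the same role is played by the zonal spherical functions of $(S_{2n},H_n)$, producing a sum over $\lambda$ of a product of three spherical-function values divided by the appropriate norm; the point is that matchings, unlike permutations, encode no local orientation, so the double-coset connection coefficients count \emph{all} bipartite maps.

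Then I would match this to the Jack side. Using $p_\rho=\sum_\lambda\chi^\lambda_\rho\,s_\lambda$ together with the specializations $J^{(1)}_\lambda=H_\lambda s_\lambda$ and $\langle J^{(1)}_\lambda,J^{(1)}_\lambda\rangle_1=H_\lambda^2$, where $H_\lambda=n!/\dim\lambda$ is the hook-length product, the series inside the logarithm in \eqref{EqDefH} becomes $\sum_\lambda H_\lambda\,s_\lambda(\bm x)s_\lambda(\bm y)s_\lambda(\bm z)\,t^{|\lambda|}$ at $\alpha=1$; expanding each Schur function in power sums turns it, coefficient by coefficient, into the generating series of the disconnected factorization counts above. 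The analogous computation at $\alpha=2$ uses the identification of $J^{(2)}_\lambda$ with the zonal polynomial and the known value of $\langle J^{(2)}_\lambda,J^{(2)}_\lambda\rangle_2$, reproducing the double-coset counts. Finally, by the exponential formula the logarithm selects the transitive (equivalently, connected) objects, so $\log$ of each series enumerates connected maps weighted by $1/n!$; applying $\alpha t\,\partial_t$ multiplies the coefficient of $t^n$ by $\alpha n$, supplying exactly the factor $n$ needed to distinguish a root edge (turning $1/n!$ into $1/(n-1)!$) when $\alpha=1$, and the additional factor $2$ demanded by the matching model when $\alpha=2$. Comparing with \eqref{eq:OrientableGF} and \eqref{eq:NonOrientableGF} then gives the two claimed identities.

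I expect the main obstacle to be the case $\alpha=2$: verifying that $(S_{2n},H_n)$ is a Gelfand pair whose zonal spherical functions are precisely the Jack polynomials at $\alpha=2$, and that the connection coefficients of its Hecke algebra enumerate locally orientable bipartite maps with the correct rooting normalization, is substantially more delicate than the symmetric-group bookkeeping of the orientable case, and is where the genuinely non-orientable combinatorics enters.
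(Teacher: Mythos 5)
Your proposal is correct, but note that the paper does not prove this theorem at all: it is quoted from \cite{JacksonVisentin1990, GouldenJackson1996a}, and your sketch is precisely the argument of those references --- the permutation-pair encoding plus the Frobenius formula and $J^{(1)}_\lambda = H_\lambda s_\lambda$ for $\alpha=1$, and the perfect-matching encoding plus the Gelfand pair $(S_{2n},H_n)$ and zonal polynomials $J^{(2)}_\lambda$ for $\alpha=2$, with the logarithm extracting transitive objects and $\alpha t\partial_t$ supplying the rooting normalization. So your route coincides with the source of the cited result rather than with anything proved in the paper itself.
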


In other words, \Cref{theo:MapSeries} says that for any partitions $\mu,\nu,\tau \vdash n$ the coefficient $h_{\mu, \nu}^\tau(0) = m_{\mu, \nu}^\tau$ counts some rooted, orientable, bipartite maps and $h_{\mu, \nu}^\tau(1) = \widetilde{m}_{\mu, \nu}^\tau$ counts some rooted, general (i.e. orientable or not), bipartite maps.
Thus we may wonder whether, for a general $\beta := \a - 1$,
the quantity $h_{\mu, \nu}^\tau(\beta)$ also admits 
a nice combinatorial description.
Note that $h_{\mu, \nu}^\tau(\beta)$ is \emph{a priori} a quantity
depending on a parameter $\a$, and describing it as a quantity depending on a different parameter $\beta := \alpha-1$ might seem be artificial.
However, it turned out that this shift seems to be a right one for finding a combinatorial interpretation of $h_{\mu, \nu}^\tau(\beta)$, as suggested by Goulden and Jackson \cite{GouldenJackson1996} in the following conjecture.

\begin{conjecture}[$b$-conjecture]
\label{conj:BConj}
For all partitions $\tau, \mu, \nu \vdash n \geq 1$ the quantity $h_{\mu, \nu}^\tau(\beta)$ can be expressed as:
\begin{equation}
\label{eq:StatisticOnMaps}
h_{\mu, \nu}^\tau(\beta) = \sum_{M}\beta^{\eta(M)},
\end{equation}
where the summation index runs over all rooted, bipartite maps $M$
with the face
distribution $\tau$, the black vertex distribution $\mu$, the white
vertex distribution $\nu$, and where $\eta(M)$ is a
nonnegative integer which is equal to $0$ if and only if $M$ is orientable.
\end{conjecture}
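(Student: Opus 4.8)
The plan is to attack the conjecture by isolating its three logically independent components: \emph{polynomiality} (that each $h_{\mu,\nu}^\tau(\beta)$ is a polynomial in $\beta$), \emph{integrality and positivity} of its coefficients, and the \emph{existence of a single map statistic} $\eta$ that realizes those coefficients. Polynomiality should come directly from the defining equation \eqref{EqDefH}: expanding the logarithmic derivative of the weighted sum of triple Jack products and using the known rational dependence of the relevant Jack quantities on $\a$, one expects $h_{\mu,\nu}^\tau$ to lie in $\Z[\beta]$, with the two evaluations at $\beta \in \{0,1\}$ pinned down by \cref{theo:MapSeries}. These two evaluations already force strong constraints on any candidate $\eta$: the constant term $h_{\mu,\nu}^\tau(0)=m_{\mu,\nu}^\tau$ must count exactly the orientable maps (so $\eta$ vanishes precisely on them), while $h_{\mu,\nu}^\tau(1)=\widetilde m_{\mu,\nu}^\tau$ must equal the total number of maps with the prescribed distributions.

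The central idea is to gain control of the \emph{whole} polynomial, not just its endpoints, by exploiting additional special values of $\beta$. First I would look for a third value --- the abstract suggests $\beta=-1$, i.e.\ $\a=0$ --- at which the Jack series degenerates to a tractable object, and show that the corresponding evaluation computes, up to sign, the \emph{top-degree part} of $h_{\mu,\nu}^\tau(\beta)$ in $\beta$. Since the degree in $\beta$ is governed by the maximal non-orientability of maps with the given distributions, identifying the leading coefficient pins down the range of $\eta$ and its extremal behaviour. To make this effective I would restrict to the \emph{unicellular} case $\tau=(n)$, where the sum over $\la$ in \eqref{EqDefH} simplifies substantially and the coefficients $h_{\mu,\nu}^{(n)}(\beta)$ become amenable to explicit manipulation.

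With both the constant term (orientable maps) and the leading term (the $\beta=-1$ evaluation) under control in the unicellular case, the remaining task is to interpolate: construct a statistic $\eta(M)$, built from the rooting together with the cross-cap/handle structure witnessing non-orientability, and verify \eqref{eq:StatisticOnMaps} degree by degree. For unicellular maps of genus at most $2$ the polynomial $h_{\mu,\nu}^{(n)}(\beta)$ has low degree, so the identity reduces to a finite, combinatorially controlled comparison between the coefficients extracted from \eqref{EqDefH} and the distribution of $\eta$ over the finitely many topological types; positivity and integrality then follow automatically once the matching statistic is exhibited.

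The hard part will be the interpolation step in full generality. Anchoring the polynomial at $\beta=0$ and at the new top-degree value $\beta=-1$ controls only its two ends; producing one uniform statistic $\eta$ that reproduces \emph{every} intermediate coefficient is precisely the open content of the conjecture, and I expect it to resist a direct attack beyond the unicellular, low-genus regime where the degree in $\beta$ is small enough to be handled explicitly.
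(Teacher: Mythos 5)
The statement you were asked to prove is the $b$-conjecture itself, and it is open: the paper does not prove it, and neither do you. What the paper establishes are partial results --- \cref{theo:UnhandledOnefaceMaps} (the identity \eqref{eq:StatisticOnMaps} holds for unicellular maps, $\tau=(n)$, at the three values $\beta\in\{-1,0,1\}$) and \cref{theo:LowGenera} (the full identity for unicellular maps with $\ell(\mu)+\ell(\nu)\geq n-3$, i.e.\ genus at most $2$). Your plan correctly reconstructs the skeleton of that partial-result strategy: anchor the polynomial at $\beta=0,1$ via \cref{theo:MapSeries}, identify the third special value $\beta=-1$ whose evaluation captures, up to sign, the top-degree part in the unicellular case, then interpolate in low genus where only finitely many coefficients are unknown. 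Your closing paragraph concedes that producing one statistic reproducing every coefficient ``is precisely the open content of the conjecture,'' which is exactly right: your proposal is a research plan aligned with the paper, not a proof of the statement, and no proof of the statement exists.

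Beyond that, two substantive gaps even relative to the partial results. First, polynomiality does not ``come directly from the defining equation \eqref{EqDefH}'': it is \cref{theo:Polynomiality}, a difficult prior result of the author with F\'eray, and it yields only \emph{rational} coefficients with a degree bound; integrality and positivity are unknown in general, so your expectation that $h_{\mu,\nu}^\tau\in\Z[\beta]$ is itself conjectural. Second, the paper's $\beta=-1$ evaluation is not obtained by degenerating the Jack series at $\alpha=0$; it runs through the marginal-sum identity (\cref{prop:marginalsum} and \cref{ptheo:marginalSumEta}, proved by a root-edge-deletion recursion), a sign-reversing involution built from edge twists (\cref{lem:involution}), and La Croix's structural form of the polynomial (\cref{lem:expansion}). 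Likewise the genus-$\leq 2$ case hinges on the delicate involution of \cref{lem:2handles} on maps with two handles, where naive twisting fails and the map must be resewn along two edges, followed by a non-degenerate three-point interpolation at $\beta\in\{-1,0,1\}$. Your plan names none of these mechanisms, so even as a route to the theorems the paper actually proves, the key lemmas are missing.
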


\subsection{\texorpdfstring{$b$}{b}-conjecture and main result}
\label{subsec:Introduction-BConjecture}

The above conjecture is still to be resolved, but some progress towards determining a suitable statistic $\eta$, based on the combinatorial
interpretation of the so-called \emph{marginal sums for maps}, has
been made in the last two decades. Note that there is a natural
bijection between the set of rooted maps with $n$ edges, which are not
necessarily bipartite, and the set of rooted, bipartite maps with the white vertex distribution given by $\nu = (2^n)$. In particular, the following m.g.s 
\[ \Psi(\bm{x}, \bm{y}; t, 1+\beta) := \sum_{n \geq 1} t^n \sum_{\mu, \tau \vdash 2n}h_{\mu, (2^n)}^\tau(\beta)p_\tau(\bm{x}) p_\mu(\bm{y}) \]
is of special interest, as $\Psi(\bm{x}, \bm{y}; t, 1)$ is the
m.g.s. for rooted orientable maps, and $\Psi(\bm{x}, \bm{y}; t, 2)$ is
the m.g.s. for all rooted maps. A formula for $\Psi(\bm{x}, \bm{y}; t,
1+\beta)$ involving the Selberg integral was found by Goulden, Harer and Jackson \cite{GouldenHarerJackson2001}, who suggested that using their formula it is possible to find a combinatorial interpretation of the following marginal sum
\[ l^r_\mu(\beta) = \sum_{\ell(\tau) = r}h_{\mu, (2^n)}^\tau(\beta)\]
in terms of a map statistic as in \cref{eq:StatisticOnMaps} (here $\mu,
\tau \vdash 2n$, and the summation is taken over all partitions $\tau$
which have precisely $r$ nonnegative parts; see
\cref{SubsecPartitions} for a precise definition). A weaker result was first established by Brown and Jackson \cite{BrownJackson2007}, who found some statistics $\eta$ of maps that describe the total marginal sum
\[ k_\mu(\beta) = \sum_{r \ge 1} l^r_\mu(\beta) = \sum_{\tau \vdash 2n}h_{\mu, (2^n)}^\tau(\beta). \]
A simpler description of $\eta$ was found by La Croix \cite{LaCroix2009}, who used it to give a combinatorial description of $l^r_\mu(\beta)$, as suggested by Goulden, Harer and Jackson.

However, not much was known about an algebraic or combinatorial structure of $h_{\mu, \nu}^\tau(\beta)$ for arbitrary partitions $\tau, \mu, \nu \vdash n$, until very recently we have proved in a joint paper with F\'eray \cite{DolegaFeray2016} the following theorem:

\begin{theorem}
\label{theo:Polynomiality}
For all partitions $\tau, \mu, \nu \vdash n \geq 1$ the quantity $h_{\mu, \nu}^\tau(\beta)$ is a polynomial in $\beta$ of degree $2+n-\ell(\tau) - \ell(\mu) - \ell(\nu)$ with rational coefficients.
\end{theorem}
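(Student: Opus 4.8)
The plan is to turn the analytic definition \eqref{EqDefH} of $h_{\mu,\nu}^\tau$ into a finite sum of Jack characters and then to analyze that sum inside the deformed algebra of shifted symmetric functions $\Pola$. As a first step I would expand each Jack polynomial in the power-sum basis,
\[
J_\la^{(\a)} = \sum_{\rho \vdash |\la|} \theta^\la_\rho(\a)\, p_\rho,
\]
and recall that $j_\la := \langle J_\la^{(\a)}, J_\la^{(\a)}\rangle_\a$ is an explicit product of linear factors in $\a$ indexed by the cells of $\la$, so that $1/j_\la \in \QQ(\a)$. Substituting these expansions into \eqref{EqDefH}, expanding the logarithm as a signed sum over set partitions (the usual connected/disconnected correspondence), applying $\a t\,\partial_t$ and extracting the coefficient of $t^n\,p_\mu(\xx)\,p_\nu(\yy)\,p_\tau(\zz)$, one writes $h_{\mu,\nu}^\tau(\a-1)$ as a finite $\QQ(\a)$-linear combination of products
\[
\prod_i \frac{\theta^{\la^{(i)}}_{\rho^{(i)}}(\a)\,\theta^{\la^{(i)}}_{\sigma^{(i)}}(\a)\,\theta^{\la^{(i)}}_{\pi^{(i)}}(\a)}{j_{\la^{(i)}}},
\]
summed over tuples $(\la^{(i)})$ of partitions, where $\rho^{(i)},\sigma^{(i)},\pi^{(i)}$ run over the distributions of $\mu,\nu,\tau$ among the factors. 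Since the coefficients of $J_\la^{(\a)}$ are polynomials in $\a$ (Knop--Sahi), each $\theta^\la_\rho(\a)$ is a polynomial, and so \emph{a priori} $h_{\mu,\nu}^\tau(\a-1)\in\QQ(\a)$; the whole content of the theorem is that it is in fact a polynomial of the prescribed degree.

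Second --- and this is where the main difficulty lies --- I would prove that all the denominators $j_\la$ cancel. The key is to read the $\la$-sums as structure constants of a $\QQ[\a]$-algebra rather than as mere rational functions. Concretely, the normalised Jack characters $\theta_\mu^{(\a)}\colon\la\mapsto (\text{const})\,\theta^\la_{\mu\cup 1^{|\la|-|\mu|}}(\a)$ are, for fixed $\mu$, elements of the filtered algebra $\Pola$ of $\a$-shifted symmetric functions, and they form a linear basis of it; this is the Lassalle--Dołęga--Féray polynomiality statement for Jack characters. Under this identification the sums $\sum_{\la\vdash n}\theta^\la_\mu\theta^\la_\nu\theta^\la_\tau/j_\la$ become the connection constants expressing a product of Jack characters back in the character basis, together with a scalar-product normalisation built from $\langle\cdot,\cdot\rangle_\a$. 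Because the multiplication table of $\Pola$ in the character basis has coefficients in $\QQ[\a]$, these constants are polynomials in $\a$, and the poles coming from $1/j_\la$ cancel globally. I expect the verification that the relevant scalar products are genuinely the structure constants of this $\QQ[\a]$-algebra (and not merely of its fraction field) to be the heart of the argument: it is the Jack-deformed analogue of the integrality of symmetric-group connection coefficients recovered at $\a=1$.

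Finally, for the degree statement I would work with two compatible gradations on $\Pola$: the $\star$-gradation under which $\deg\theta_\mu = |\mu|+\ell(\mu)$, and the degree in $\beta=\a-1$. Dołęga--Féray's description of the top $\star$-degree part of a Jack character in terms of free cumulants $R_2,R_3,\dots$ and the parameter $\gamma=\sqrt{\a}-1/\sqrt{\a}$ lets one compute, term by term in the structure-constant expansion of Step~1, the exact $\beta$-degree contributed by the characters $\theta_\mu,\theta_\nu,\theta_\tau$ against the degree lost to the normalisations $1/j_\la$ and to the constraint $\la\vdash n$. The bookkeeping yields the upper bound $\deg_\beta h_{\mu,\nu}^\tau \le 2+n-\ell(\tau)-\ell(\mu)-\ell(\nu)$. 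This number is, by Euler's formula, equal to $2-\chi$ for a bipartite map with $\ell(\mu)+\ell(\nu)$ vertices, $n$ edges and $\ell(\tau)$ faces, i.e.~twice the maximal genus; that topological reading is a useful consistency check, and it also explains why the bound is nonnegative.

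Exactness of the degree then reduces to the non-vanishing of the corresponding top-degree component, which the free-cumulant formula exhibits as an explicit nonzero polynomial. Putting the three steps together gives polynomiality in $\beta$ together with the exact degree $2+n-\ell(\tau)-\ell(\mu)-\ell(\nu)$. The orientable specialisation $\a=1$ of \Cref{theo:MapSeries}, where the leading term counts maps of maximal genus, provides a final sanity check on the leading coefficient.
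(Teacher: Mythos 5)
Your proposal cannot be compared against ``the paper's own proof'' because the paper contains none: \cref{theo:Polynomiality} is imported verbatim from the joint paper with F\'eray \cite{DolegaFeray2016}, and the proof there is genuinely different from your plan --- it represents $h_{\mu,\nu}^\tau$ as (conditional) cumulants of Jack-polynomial evaluations and proves polynomiality and degree bounds for those cumulants directly, rather than passing through the algebra $\Pola$ and its structure constants. Note also that what \cite{DolegaFeray2016} establishes is the degree \emph{upper bound} $\deg_\beta h_{\mu,\nu}^\tau \le 2+n-\ell(\mu)-\ell(\nu)-\ell(\tau)$, which is all the present paper ever uses (through La Croix's expansion in \cref{lem:expansion}); your final claim that the degree is exact rests on an asserted but unproved non-vanishing of a top-degree component.

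Beyond this, two steps of your plan have concrete gaps. First, the input invoked in your Step 2 --- that the multiplication table of $\Pola$ in the Jack-character basis has entries in $\QQ[\a]$ --- is not a known theorem in that form; you are conflating two different statements. What is known is (i) that each normalized Jack character is $\a$-shifted symmetric and that these characters form a basis of $\Pola$, and (ii) that the structure constants are polynomial in the variable $\sqrt{\a}-1/\sqrt{\a}$, but only for Lassalle's renormalization of the characters, which carries prefactors that are half-integer powers of $\a$ (this is a separate Dołęga--F\'eray paper, not \cite{DolegaFeray2016}). Translating (ii) into $\QQ[\a]$-polynomiality of your sums $\sum_{\la\vdash n}\theta^\la_\mu\theta^\la_\nu\theta^\la_\tau/j_\la$ requires ruling out both half-integer powers and poles at $\a=0$; the latter genuinely enter, since the orthogonality relation reads $\sum_{\la\vdash n}\theta^\la_\pi\theta^\la_\tau/j_\la=\delta_{\pi\tau}/\left(z_\tau\a^{\ell(\tau)}\right)$. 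Doing so is essentially equivalent to the Matchings--Jack polynomiality statement, i.e.\ a sibling of the theorem you are proving, so this step assumes the conclusion. Second, and more fundamentally, the degree bound cannot be obtained ``term by term'' in the set-partition expansion of the logarithm, as your Step 3 proposes. A term coming from a set partition with $k$ blocks is a product of $k$ coefficients of the un-logged series, and even granting each factor the optimal connected bound $n_i+2-\ell(\mu^i)-\ell(\nu^i)-\ell(\tau^i)$, the product has $\beta$-degree up to $n+2k-\ell(\mu)-\ell(\nu)-\ell(\tau)$; combinatorially these terms correspond to disconnected maps with $k$ components, whose total genus satisfies exactly this larger bound. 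For $k\ge 2$ this exceeds the target, so everything of degree greater than $n+2-\ell(\mu)-\ell(\nu)-\ell(\tau)$ must cancel \emph{across} different set partitions. Exhibiting that cancellation is the heart of the theorem, and it is precisely what the cumulant machinery of \cite{DolegaFeray2016} is designed to capture; your bookkeeping does not address it.
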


\medskip

In this paper we are focused on a combinatorial part of
$b$-conjecture, especially in the case of rooted, bipartite maps, with
only one face (called \emph{unicellular}). Let us fix a positive
integer $n$, and two partitions $\mu,\nu \vdash n$. According to
\cref{conj:BConj}, there exists some statistic $\eta$ on the set of
all rooted bipartite maps, such that the quantity
$h_{\mu,\nu}^{(n)}(\beta)$ is given by the m.g.s. of rooted,
unicellular, bipartite maps with the black (white, respectively) vertex distribution $\mu$ ($\nu$, respectively). We show that except two special values of $\beta = 0,1$ for which the combinatorial interpretation of $h_{\mu,\nu}^{(n)}(\beta)$ was known, there exists a third special value $\beta = -1$ for which we provide a combinatorial interpretation of $h_{\mu,\nu}^{(n)}(\beta)$. As a result we prove the following:

\begin{theorem}
\label{theo:UnhandledOnefaceMaps}
For all partitions $\mu, \nu \vdash n \geq 1$ 
\[h_{\mu, \nu}^{(n)}(\beta) = \sum_{\mathcal{M}}\beta^{\eta(\mathcal{M})},\]
holds true for $\beta \in \{-1,0,1\}$,
where the summation index runs over all rooted, bipartite unicellular
maps $\mathcal{M}$ with the black vertex distribution $\mu$, the white
vertex distribution $\nu$, and where $\eta(\mathcal{M})$ is a
nonnegative integer which is equal to $0$ if and only if $\mathcal{M}$ is orientable.
\end{theorem}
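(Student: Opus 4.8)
The plan is to treat the three evaluations separately, the values $\beta\in\{0,1\}$ being formal consequences of known results and $\beta=-1$ carrying all of the new content.

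For $\beta=0$ and $\beta=1$ I would argue directly from \Cref{theo:MapSeries}. Since the statistic $\eta$ is required only to be nonnegative and to vanish exactly on orientable maps, the sum $\sum_{\mathcal M}0^{\eta(\mathcal M)}$ retains precisely the terms with $\eta(\mathcal M)=0$, i.e.\ the orientable unicellular bipartite maps with black/white vertex distributions $\mu,\nu$; by \Cref{theo:MapSeries} their number is $m_{\mu,\nu}^{(n)}=h_{\mu,\nu}^{(n)}(0)$. Likewise $\sum_{\mathcal M}1^{\eta(\mathcal M)}$ counts all such maps, which is $\widetilde m_{\mu,\nu}^{(n)}=h_{\mu,\nu}^{(n)}(1)$. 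Neither identity uses anything about $\eta$ beyond its sign and its support, so these two cases are immediate.

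The core is $\beta=-1$, equivalently $\alpha=0$. Here I would first compute $h_{\mu,\nu}^{(n)}(-1)$ purely algebraically from \eqref{EqDefH}, by analysing the $\alpha\to0$ asymptotics of the Jack series. Two inputs are needed: that $J_\lambda^{(\alpha)}$ extends to a finite, explicit limit $J_\lambda^{(0)}$ (an elementary-symmetric-type polynomial governed by the conjugate partition $\lambda'$), and that the norm $\langle J_\lambda,J_\lambda\rangle_\alpha$ vanishes at $\alpha=0$ to order exactly $\lambda_1$ with a computable leading coefficient. Substituting these into $\psi=\alpha\,t\,\partial_t\log(\cdots)$, the prefactor $\alpha$ together with the logarithm controls the poles, and extracting the coefficient of $p_\mu(\xx)\,p_\nu(\yy)\,p_n(\zz)$—the single part $\tau=(n)$ prescribing a unique face—should identify $h_{\mu,\nu}^{(n)}(-1)$, up to an explicit sign, with the number of rooted \emph{orientable} bipartite maps of vertex type $\mu,\nu$ with \emph{arbitrary} face degrees. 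By \Cref{theo:Polynomiality} the degree of $h_{\mu,\nu}^{(n)}(\beta)$ is $1+n-\ell(\mu)-\ell(\nu)$, which equals the Euler-characteristic defect $d$ of a one-face map of this type; this is what lets one read the value at $-1$ as a signed top-degree contribution.

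It then remains to match this algebraic value with the combinatorial side $\sum_{\mathcal M}(-1)^{\eta(\mathcal M)}$, and this is where the statistic $\eta$ must be built. I would define $\eta$ from the root-edge/crosscap recursion so that $\eta\ge0$, $\eta=0$ exactly on orientable maps, and $\max\eta=d$, and then construct a sign-reversing involution on the non-orientable unicellular maps, changing $\eta$ by one, whose fixed points are in explicit bijection with the orientable arbitrary-face maps counted above. The alternating sum then collapses onto that bijective count, matching the algebraic value. I expect this to be the main obstacle: defining $\eta$ and the involution simultaneously, so that $\eta$ meets all three structural constraints while $\sum_{\mathcal M}(-1)^{\eta(\mathcal M)}$ reproduces the arbitrary-face orientable count—in effect trading ``one face carrying the non-orientability'' for ``several faces, orientable''. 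Once $\eta$ is pinned down in this way, the genus $\le 2$ portion of \Cref{conj:BConj} reduces to verifying the matching in bounded defect, a finite check carried out by the same recursion.
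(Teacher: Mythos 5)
Your handling of $\beta\in\{0,1\}$ coincides with the paper's (both are immediate from \cref{theo:MapSeries}), but the case $\beta=-1$, which carries all the content, has genuine gaps on both legs of your argument. On the algebraic leg, computing $h_{\mu,\nu}^{(n)}(-1)$ by letting $\alpha\to0$ in \cref{EqDefH} is not viable as sketched: inside the logarithm, the term indexed by $\lambda$ has a pole at $\alpha=0$ of order $\lambda_1$ (the norm $\langle J_\lambda,J_\lambda\rangle_\alpha$ vanishes to that order while the numerator stays finite), so the single prefactor $\alpha$ cannot control the singularity; finiteness of $\psi$ at $\alpha=0$ rests on delicate cancellations inside the logarithm, and you give no mechanism by which this limit would produce the count of rooted \emph{orientable} bipartite maps with \emph{arbitrary} face degrees. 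The paper never performs such an asymptotic analysis: it specializes $\zz=(1,0,0,\dots)$, which by \cref{eq:EvaluationInOnePart} collapses the Jack sum to one-part partitions and yields the exact identity $\sum_{\tau\vdash n}h_{\mu,\nu}^\tau(\beta)=(1+\beta)^{n+1-\ell(\mu)-\ell(\nu)}\sum_{\tau\vdash n}h_{\mu,\nu}^\tau(0)$ (\cref{prop:marginalsum}); since $\tau=(n)$ is the unique maximizer of $g=2+n-\ell(\mu)-\ell(\nu)-\ell(\tau)$, the top coefficient of this marginal sum is carried by $h_{\mu,\nu}^{(n)}$ alone, and that is how the arbitrary-face orientable count enters. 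Moreover, your step ``the degree equals the defect $d$, which lets one read the value at $-1$ as a signed top-degree coefficient'' does not follow from \cref{theo:Polynomiality} alone: it requires the structural expansion $h_{\mu,\nu}^\tau(\beta)=\sum_i a_{\mu,\nu;i}^\tau\,\beta^{g-2i}(\beta+1)^i$ of \cref{lem:expansion} (La Croix's result, made unconditional by polynomiality), and the parallel statement on the map side requires \cref{eq:HPolynomialForm} together with \cref{lem:LaCroixPolynomials}.

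On the combinatorial leg, the bookkeeping of your involution is wrong. A sign-reversing involution defined only on \emph{non-orientable} unicellular maps cannot give the right value: orientable unicellular maps of positive genus each contribute $+1$ to $\sum_{\mathcal M}(-1)^{\eta(\mathcal M)}$, yet the true value $(-1)^{g}\sum_\tau m_{\mu,\nu}^\tau$ is negative when $g$ is odd, so those orientable maps must themselves be paired off --- and indeed they can be, since they contain handles. The paper's involution (\cref{lem:involution}) acts on all maps whose root-deletion process contains at least one handle, orientable or not; the survivors are the \emph{unhandled} maps, each with $\eta=g$ forced by \cref{cor:EtaDegree}, whence $\left(H_\eta\right)_{\mu,\nu}^{(n)}(-1)=(-1)^{g}\#\widetilde{\M}_{\mu,\nu;0}^{(n)}$. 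The identification of this count with orientable arbitrary-face maps is then obtained not by a direct bijection but by comparing the algebraic marginal-sum identity (\cref{prop:marginalsum}) with its combinatorial counterpart (\cref{ptheo:marginalSumEta}, proved by a root-edge-deletion recursion); equating top-degree coefficients of the two marginal sums is what closes the proof. Your architecture --- evaluate both sides at $-1$ independently against a common third quantity --- could in principle work, but both evaluations are precisely the points left unjustified.
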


We show that the top-degree part of the polynomial $h_{\mu,
  \nu}^{(n)}(\beta)$ is equal, up to a sign, to its evaluation at
$\beta = -1$, thus we show that it is given by some rooted, bipartite,
unicellular maps with the black (white, respectively) vertex distribution
$\mu$ ($\nu$, respectively), which are called ``unhandled'' (the
origin of this terminology will be clear later, after we define an
appropriate statistic $\eta$; see \cref{sec:nonorientability}). We
also show that these maps are in a bijection with rooted,
\emph{orientable}, bipartite maps with the black (white, respectively)
vertex distribution $\mu$ ($\nu$, respectively) and with the arbitrary face distribution. 

\medskip

Finally, we show that $b$-conjecture holds true for an infinite family of rooted, unicellular bipartite maps of genus at most $2$:

\begin{theorem}
\label{theo:LowGenera}
For all partitions $\mu, \nu \vdash n \geq 1$ satisfying $\ell(\mu) +
\ell(\nu) \geq n-3$ and $\tau = (n)$ the $b$-conjecture holds true, i.e. 
\[ h_{\mu,\nu}^{(n)}(\beta) = \sum_{\mathcal{M}}\beta^{\eta(\mathcal{M})},\]
where the summation index runs over all rooted, bipartite unicellular
maps $\mathcal{M}$ with the black vertex distribution $\mu$, the white vertex distribution $\nu$, and $\eta(\mathcal{M})$ is a
nonnegative integer which is equal to $0$ if and only if $\mathcal{M}$ is orientable.
\end{theorem}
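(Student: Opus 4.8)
The plan is to exploit the two structural results the paper has already set up: the polynomiality of $h_{\mu,\nu}^{(n)}(\beta)$ from \Cref{theo:Polynomiality}, and the identification of the top-degree part with the evaluation at $\beta=-1$ (\Cref{theo:UnhandledOnefaceMaps}). The key observation is that the constraint $\ell(\mu)+\ell(\nu)\geq n-3$ forces the degree of the polynomial $h_{\mu,\nu}^{(n)}(\beta)$ to be very small. Indeed, by \Cref{theo:Polynomiality} with $\tau=(n)$ (so $\ell(\tau)=1$), the degree is
\[
\deg_\beta h_{\mu,\nu}^{(n)}(\beta) = 2 + n - 1 - \ell(\mu) - \ell(\nu) = 1 + n - \ell(\mu) - \ell(\nu) \leq 4.
\]
So under the hypothesis the polynomial has degree at most $4$, and in fact the genus of the unicellular maps contributing is bounded correspondingly (a unicellular bipartite map with vertex distributions $\mu,\nu$ lies on a surface of Euler characteristic $\ell(\mu)+\ell(\nu)-n+1$, so the hypothesis means genus at most $2$ in the non-orientable sense). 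First I would make this degree/genus dictionary precise, reducing the problem to controlling a polynomial of degree $\leq 4$.

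The second step is to pin down the polynomial from a small number of its values and leading behavior. The three evaluations $\beta\in\{-1,0,1\}$ are already handled by \Cref{theo:UnhandledOnefaceMaps}: we know $h_{\mu,\nu}^{(n)}(0)$ and $h_{\mu,\nu}^{(n)}(1)$ have the required map interpretation, and $h_{\mu,\nu}^{(n)}(-1)$ equals (up to sign) the top-degree coefficient data. The strategy is to show that a polynomial of degree at most $4$ with nonnegative integer coefficients is essentially forced by its values at these points together with the top-degree term, at least enough to guarantee that all coefficients are nonnegative integers and that they count unhandled maps by the statistic $\eta$. Concretely, I would write $h_{\mu,\nu}^{(n)}(\beta)=\sum_{j\geq 0} c_j \beta^j$ and argue that the known data determine the $c_j$ as signed sums of map-counts, then verify combinatorially that the resulting $c_j$ coincide with $\sum_{\eta(\mathcal{M})=j}1$ over unhandled unicellular maps.

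The third step, and the main obstacle, is the middle coefficients. The endpoints and the top degree are controlled by the existing theorems, but for degree exactly $3$ or $4$ there are interior coefficients $c_2$ (and possibly $c_3$) that are not pinned down by evaluation at $\{-1,0,1\}$ alone. The hard part will be showing these interior coefficients are themselves nonnegative integers realizing $\eta$; I expect this requires an explicit classification of rooted unicellular bipartite maps of the relevant small genera together with a direct computation of the $\eta$-generating polynomial, matched against $h_{\mu,\nu}^{(n)}(\beta)$ via an additional evaluation or via the explicit Jack-polynomial expansion of $\psi$ in low degree. A clean way to close the gap is to combine the degree bound with an independent computation of one further coefficient (e.g.\ the constant term $c_0=m_{\mu,\nu}^{(n)}$ counting orientable maps, known from \Cref{theo:MapSeries}) and the symmetry/positivity forced by the top-degree identification, reducing the unknowns to a number small enough that the remaining cases can be checked by hand or by the explicit structure of unicellular maps of genus $\leq 2$.
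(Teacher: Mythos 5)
Your first step (the degree bound $\deg_\beta h_{\mu,\nu}^{(n)}\leq 4$) and your use of the three evaluations at $\beta\in\{-1,0,1\}$ from \cref{theo:UnhandledOnefaceMaps} match the opening of the paper's argument, but there is a genuine gap exactly where you locate ``the main obstacle'': three evaluations of a polynomial of degree up to $4$ leave a two-parameter family undetermined, and none of your suggested ways to close it can work. An ``additional evaluation'' is not available, since no interpretation of $h_{\mu,\nu}^{(n)}(\beta)$ is known at any $\beta\notin\{-1,0,1\}$; the constant term $c_0=m_{\mu,\nu}^{(n)}$ is just the evaluation at $\beta=0$, so it adds no new information; and an ``explicit classification \dots checked by hand'' is not feasible, because $n$ and the vertex distributions are arbitrary, so the unicellular maps of genus at most $2$ in question form an infinite family.

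The paper closes this gap with two structural inputs that your proposal never invokes. First, \cref{lem:expansion} (La Croix's conditional result, made unconditional by \cref{theo:Polynomiality}) gives the refined shape $h_{\mu,\nu}^{(n)}(\beta)=\sum_{0\leq i\leq [g/2]} a_{\mu,\nu;i}^{(n)}\,\beta^{g-2i}(1+\beta)^i$, which for $g\leq 4$ cuts the unknowns from five to at most three. Second --- and this is the real work of Section 5 --- the \emph{same} shape must be proved for the map side $\left(H_{\eta_\OOO}\right)_{\mu,\nu}^{(n)}(\beta)$. By \cref{prop:a-ki} and \cref{lem:LaCroixPolynomials}, the contribution of maps with $i\leq 1$ handles automatically has the form $\widetilde{a}_i\,\beta^{g-2i}(1+\beta)^i$, but for $i=2$ one only gets a quadratic vanishing at $\beta=-1$, i.e.\ $(1+\beta)$ times a linear factor. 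To upgrade this to a multiple of $(1+\beta)^2$, the paper constructs an involution $\sigma_\OOO$ on two-handle unicellular maps satisfying $\eta_\OOO(\sigma_\OOO(M))=2-\eta_\OOO(M)$ (\cref{lem:2handles}), whose existence rests on the delicate surgery of \cref{lem:order} (re-gluing half-edges when twisting edges fails to produce the required map) and which works only for measures of non-orientability $\eta_\OOO$ arising from a set of orientations, not for an arbitrary $\eta$. Only once both sides are known to lie in the same three-parameter family do the evaluations at $\{-1,0,1\}$ form a non-degenerate linear system forcing $a_{\mu,\nu;i}^{(n)}=\widetilde{a}_i$ for all $i$. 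Without the palindromicity of the two-handle contribution, the interpolation you propose cannot be completed; this missing involution is the core of the paper's proof.
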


\subsection{Related problems}
\label{subsec:related}


A second related problem is the investigation of \emph{Jack
  characters} -- suitably normalized coefficients of the power-sum
symmetric function expansion of Jack polynomials. It was suggested by
Lassalle that a combinatorial description of these objects might
exist. This combinatorial setup was indicated by some polynomiality
and positivity conjectures that he stated
in a series of papers \cite{Lassalle2008a, Lassalle2009}.
Although these conjectures are not fully resolved, it was proven by us
together with \'Sniady \cite{DolegaFeraySniady2014} that in some special
cases bipartite maps together with some statistics that ``measures
their non-orientability'' give the desired combinatorial setup.
Even more, \'Sniady \cite{Sniady2015} found the top-degree part of the
Jack character indexed by a single partition with respect to some gradation. His result states that this top-degree part can be written
as a linear combination of certain functionals, where the index set is
the set of rooted, \emph{orientable}, bipartite maps with the
arbitrary face distribution. While conjecturally it should be
expressed as a linear combination of the same functionals, where the
index set is a set of some special rooted, unicellular, bipartite
maps. \'Sniady was able to find a bijection between these two index sets
\cite{SniadyPrivate}, which inspired us to investigate the
combinatorial side of $b$-conjecture in the case of unicellular maps,
presented in this paper.

\emph{Note added in revision:} After submission of the current paper,
the aforementioned result of \'Sniady appeared in a joint paper with Czy\.zewska-Jankowska \cite{CzyzewskaJankowskaSniady2016}.
\medskip

We cannot resist stating that there must be a deep connection between all these problems, and understanding it would be of great interest.

\subsection{Organization of the paper}
In \cref{sec:preliminaries} we describe all necessary definitions and
background. Then, we introduce a family of statistics of the maps and
we study their properties in
\cref{sec:nonorientability}. \cref{sec:unicellular} is devoted to the
proof of \cref{theo:UnhandledOnefaceMaps} and its consequence, which
says that the family of statistics presented in the previous section describes the top-degree part of the polynomial $h_{\mu\nu}^{(n)}(\beta)$ associated with unicellular maps. In \cref{sec:lowGenera} we introduce some special subfamily of the statistics presented in \cref{sec:nonorientability}, we study their properties and we give a proof of \cref{theo:LowGenera}. We finish this paper by stating some concluding remarks and questions in \cref{sec:conclude}.

\section{Preliminaries}
\label{sec:preliminaries}

  \subsection{Partitions}
  \label{SubsecPartitions}
  We call $\lambda := (\lambda_1, \lambda_2, \dots, \lambda_l)$ \emph{a partition} of $n$
  if it is a weakly decreasing sequence of positive
  integers such that $\la_1+\la_2+\cdots+\la_l = n$.
  Then $n$ is called {\em the size} of $\lambda$ while $l$ is {\em its length}.
  As usual we use the notation $\la \vdash n$, or $|\la| = n$, and $\ell(\la) = l$.
  We denote the set of partitions of $n$ by $\Y_n$ and we define a partial order on $\Y_n$,
called the {\em dominance order}, in the following way:
\[ \lambda \leq \mu \iff \sum_{i\leq j}\lambda_i \leq \sum_{i\leq j}\mu_i \text{ for any positive integer } j.\]
Given two partitions $\la \in \Y_n$ and $\mu \in \Y_m$ we can construct a new partition $\la \cup \mu \in \Y_{n+m}$ obtained by merging parts of $\la$ and $\mu$ and ordering them in a decreasing fashion.

\subsection{Jack polynomials}
\label{subsec:Jack}

In this section we recall the definition of Jack polynomials
and present several known results about them.
Since they are well-established (mostly in a seminal work of Stanley \cite{Stanley1989}),
we do not give any proof, but explicit references.
\bigskip

Consider the vector space $\Symm$ of the symmetric functions $\Lambda$
over the field of rational functions $\QQ(\alpha)$
and endow it with a scalar product $\langle \cdot, \cdot \rangle_{\a}$ defined on the power-sum symmetric functions basis
by the following formula (and then extended by bilinearity):
\[\langle p_\la, p_\mu \rangle_{\a} = z_\la \a^{\ell(\la)} \delta_{\la,\mu},\]
where 
\begin{equation}
\label{eq:NumericalFactor}
z_\la := \prod_{i \geq 1}i^{m_i(\la)}m_i(\la)!.
\end{equation}
Here, $m_i(\la)$ denotes the number of parts of $\la$ equal to $i$.
This is a classical deformation of the \emph{Hall scalar-product} (which corresponds to $\alpha=1$).

Now, Jack polynomials $J_\la^{(\a)}$ are symmetric functions with an additional parameter $\a$ uniquely determined 
(see \cite[Section VI,10]{Macdonald1995}) by the following conditions:
\begin{enumerate}
\item[(C1)]
$J_\la^{(\a)} = \sum_{\mu \leq \la}a^{\la}_\mu m_\mu,$ where
$\a^{\la}_\mu \in \QQ(\a)$;
\item[(C2)]
$[m_{1^{|\la|}}]J_\la^{(\a)} := \a^{\la}_{1^{|\la|}}= |\la|!$;
\item[(C3)]
$\langle J_\la^{(\a)}, J_\mu^{(\a)}\rangle_{\a} = 0$ for $\la \neq \mu$;
\end{enumerate}
where $m_\la$ denotes the monomial symmetric function.

\subsubsection{Basic properties}

We present here several well-known identities for Jack polynomials that will be useful for us later.
\begin{align}
\label{eq:ScalarProd}
\langle J_{(n)}, J_{(n)} \rangle_\a &= (1+\a)(1+2\a)\cdots(1+(n-1)\a) \a^n n!,\\
\label{eq:JackWithOnePart}
J^{(\a)}_{(n)} &= \sum_{\mu \vdash n}\frac{n! \a^{n-\ell(\mu)}}{z_\la}p_\mu,\\
\label{eq:EvaluationInOnePart}
J^{(\a)}_\lambda((t,0,0,\dots)) &= 
\begin{cases} (1+\a)(1+2\a)\cdots(1+(n-1)\a)t^n & \text{for } \lambda = (n),\\ 0  & \text{for } \ell(\lambda) > 1.\end{cases}
\end{align}
\cref{eq:ScalarProd} and \cref{eq:JackWithOnePart} are proved in \cite[Proposition 2.2]{Stanley1989}, and \cref{eq:EvaluationInOnePart} is a consequence of the monomial basis expansion given in 
\cite{KnopSahi1997}.

\subsection{Surfaces, graphs, and maps}
\label{subsect:Maps}

A \emph{map} is an embedding of a connected graph
$G$ into a \emph{surface} $S$ (i.e.~compact, connected, $2$-dimensional
manifold) in a way that the connected components of
$S\setminus G$, called \emph{faces}, are simply connected. Our \emph{graphs} may have loops and multiple
edges. Maps are always considered up to homeomorphisms. A map is
\emph{unicellular} if it has a single face. Unicellular maps are also
called \emph{one-face maps}. We will call a map \emph{orientable} if
the underlying surface is orientable; otherwise we will call it
\emph{non-orientable}. In this paper we will be mostly focused on
non-orientable maps.

\begin{figure}
\centering
\subfloat[]{
	\label{subfig:C1}
	\includegraphics[width=0.3\linewidth]{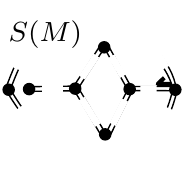}}
\quad
\subfloat[]{
	\label{subfig:M1}
	\includegraphics[width=0.3\linewidth]{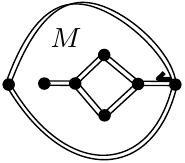}}
\caption{The map $M$ from \cref{fig:MapExampleOrient} is depicted on
  \protect\subref{subfig:M1} as a ribbon
  graph. \protect\subref{subfig:C1} shows stars $S(M)$ associated with $M$.}
\label{fig:CM}
\end{figure}

For the purposes of the current paper it is sometimes convenient to represent 
a map as a \emph{ribbon graph} as follows:
each vertex is represented as a small disc 
and each edge is represented by a thin strip connecting two discs 
in a way that a walk along the boundary of the ribbons corresponds 
to the walk along the boundary of the faces of a given map. To
draw such a picture it is helpful to do it in the following steps:
we first draw vertices (as small discs) together with thin strips attached around
them, which represent associated \emph{half-edges}. By
definition \emph{half-edges} are obtained by removing
middle-points of all the edges (each edge consists of exactly two half-edges). We call this data the \emph{stars of the
map $M$} and we denote it by $S(M)$. See \cref{subfig:C1} for an
example of $S(M)$ associated with the map $M$ from \cref{fig:MapExampleOrient}. Next, for each edge $e$ of $M$, we
can connect the borders of the strips representing two half-edges
belonging to $e$ in two possible ways (see \cref{fig:twist}). We
connect them in a way so that after
connecting all the strips from $S(M)$ the walk along the boundary of the ribbons corresponds 
to the walk along the boundary of the faces of a given
map. \cref{subfig:M1} presents the map $M$ from
\cref{fig:MapExampleOrient} represented as a ribbon graph.

A map is \emph{rooted} if it is equipped with a distinguished half-edge
called the \emph{root}, together with a distinguished side of this half-edge. The vertex incident to the
root is called the \emph{root vertex}, and the edge containing the root is
called the \emph{root edge}. There is an equivalent way to root a map by
choosing a corner (called the \emph{root corner}) and its orientation, where
a \emph{corner} in a map is an angular sector determined by a vertex, and two half-edges which are
consecutive around it. One can then define the root half-edge as the
one lying to the left of the root corner (viewed from the root vertex
and according to the root corner orientation). In this paper we will
use both conventions depending on the situation, and we will represent rooted maps by shading the root corner and/or by indicating the root that is incident to it.

The \emph{degree of a vertex} is the number of incident half-edges, or
equivalently the number of incident corners, while the \emph{degree of
  a face} is the number of corners lying in that face, or equivalently the number of edges incident
to it, with the convention that an edge incident to the same face on both sides
counts for two.

If $M$ is a map, we let $V(M)$, $E(M)$ and $F(M)$ be its
sets of vertices, edges and faces. Their cardinalities $v(M), e(M)$ and $f(M)$ satisfy the \emph{Euler formula}:
\begin{equation}
\label{eq:euler}
e(M) = v(M) + f(M) - 2 + 2g(M),
\end{equation}
where $g(M)$ denotes the \emph{genus} of a map $M$, that is the genus of the underlying surface. We recall that the Euler
characteristic of the surface is $2 - 2g(M)$, thus $g(M)$ is a nonnegative
integer when $M$ is orientable or half-integer when $M$ is
non-orientable. We also denote by $C(M)$ the
set whose elements are indexed by faces of $M$. For a fixed face
$f \in F(M)$ the associated element in $C(M)$ is the set of all
corners belonging to $f$.

\medskip

A map is \emph{bipartite} if its vertices can be colored in two colors in such a way that adjacent
vertices have different colors (say black and white). For a rooted, bipartite map, the color of its root vertex is always taken to be black, by convention.

Let $\mu,\nu,\tau$ be integer partitions. We say that a rooted bipartite map $M$ \emph{has type $(\mu,\nu;\tau)$} if $\mu$ lists the degrees of black vertices (we say \emph{$\mu$ is the black vertex distribution}), $\nu$ lists the degrees of white vertices (\emph{$\nu$ is the white vertex distribution}) and
$\tau$ lists the degrees of faces divided by two (\emph{$\tau$ is the face distribution}). 
We denote the set of rooted bipartite maps of type $(\mu,\nu;\tau)$ on
orientable (all, respectively) surfaces by $\M_{\mu,\nu}^\tau$ 
($\widetilde{\M}_{\mu,\nu}^\tau$, respectively). Note that all three
partitions $\mu,\nu,\tau$ have necessarily the same size $n$, which is
equal to the number of edges of the corresponding map, while its
lengths correspond to the number of black and white vertices and the
number of faces, respectively.

\medskip

\emph{From now on, all the maps are rooted, and bipartite, thus by
  saying a ``map'', what we really mean is a ``rooted, bipartite map''.}

\section{Measure of non-orientability in \texorpdfstring{$b$}{b}--conjecture}
\label{sec:nonorientability}

In this section we are going to construct a function that associates with a map $M$ a nonnegative integer $\eta(M)$ which, in some sense, measures its non-orientability. This ``measure of non-orientability'' gives, in some special cases, a correct answer to $b$-conjecture, i.e.~\cref{eq:StatisticOnMaps} holds true.

The construction presented in this section is due to La Croix \cite{LaCroix2009} who used it to prove that the following marginal sum
\[ l^r_\mu(\beta) = \sum_{\ell(\tau) = r}h_{\mu, (2^n)}^\tau(\beta)\]
can be expressed in the same form as the right hand side of
\cref{eq:StatisticOnMaps}, where the maps in the summation index are
not necessarily bipartite, have $r$ faces and the vertex distribution
$\mu$. The construction of La Croix was originally defined for all
(not necessarily bipartite) maps, but in this paper we are dealing
with the case of bipartite maps, thus in the following all maps will be \emph{rooted and bipartite}.

\subsection{Root-deletion procedure as measure of non-orientability}
\label{subsec:TypesOfEdges}

Let $e$ be the root edge of the map $M$. Note that by deleting $e$ from $M$ we create a new map or, possibly, two new maps and we canonically choose how to root them.
Recall that rooting a map is the same as distinguishing an oriented
corner (called the root corner), see \cref{subsect:Maps}. The root
corner of $M$ is contained in the unique corner $c$ of $M\setminus
\{e\}$ and we set it as the root corner of the connected component of $M\setminus \{e\}$ containing $c$ with an orientation inherited from the root corner of $M$. In the case where deleting $e$ from $M$ decomposes it into two connected components, we additionally distinguish the first corner in the root face of $M$ following the root corner and we notice that it is contained in the unique corner $c'$ of $M\setminus \{e\}$ that belongs to the different connected component of $M\setminus \{e\}$ than the corner $c$. We equip it with the same orientation as the root face of $M$ and we define it as the root corner of the second component of $M\setminus\{e\}$, see \cref{fig:MapWithBridge}.

\begin{figure}
\centering
\includegraphics[width=\linewidth]{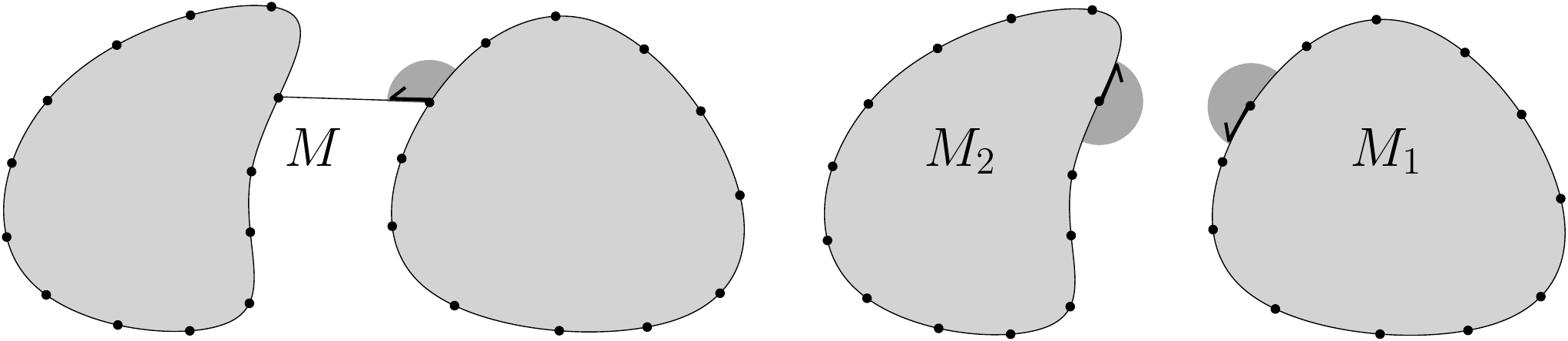}
\caption{A rooted map $M$ on the left hand side and two rooted maps $M_1$ and $M_2$ obtained from $M$ by removing its root edge. Root corners are indicated by dark grey areas.}
\label{fig:MapWithBridge}
\end{figure}

Now, we can classify the root edges $e$ of the map $M$ in the following manner:
\begin{itemize}
\item
if $e$ disconnects $M$ (i.e.~$M\setminus \{e\}$ has two connected
components), $e$ is called a \emph{bridge}; 
\item
otherwise $M\setminus \{e\}$ is connected and there are following possibilities:
\begin{itemize}
\item
the number of faces of $M\setminus \{e\}$ is smaller by $1$ than the
number of faces of $M$ -- in that case $e$ is called a \emph{border};
\item
the number of faces of $M\setminus \{e\}$ is equal to the number of
faces of $M$ -- in that case $e$ is called a \emph{twisted edge};
\item
the number of faces of $M\setminus \{e\}$ is greater by $1$ than the
number of faces of $M$ -- in that case $e$ is called a \emph{handle}.
\end{itemize}
\end{itemize}

\begin{figure}
\centering
\subfloat[]{
	\label{subfig:mapa1}
	\includegraphics[width=0.47\linewidth]{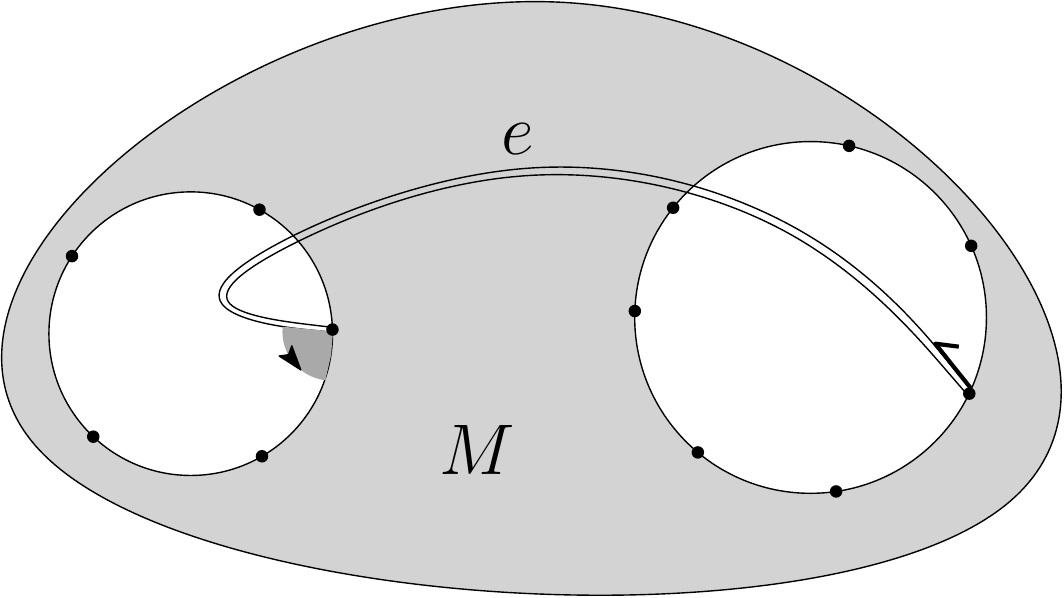}}
\quad
\subfloat[]{
	\label{subfig:mapa2}
	\includegraphics[width=0.47\linewidth]{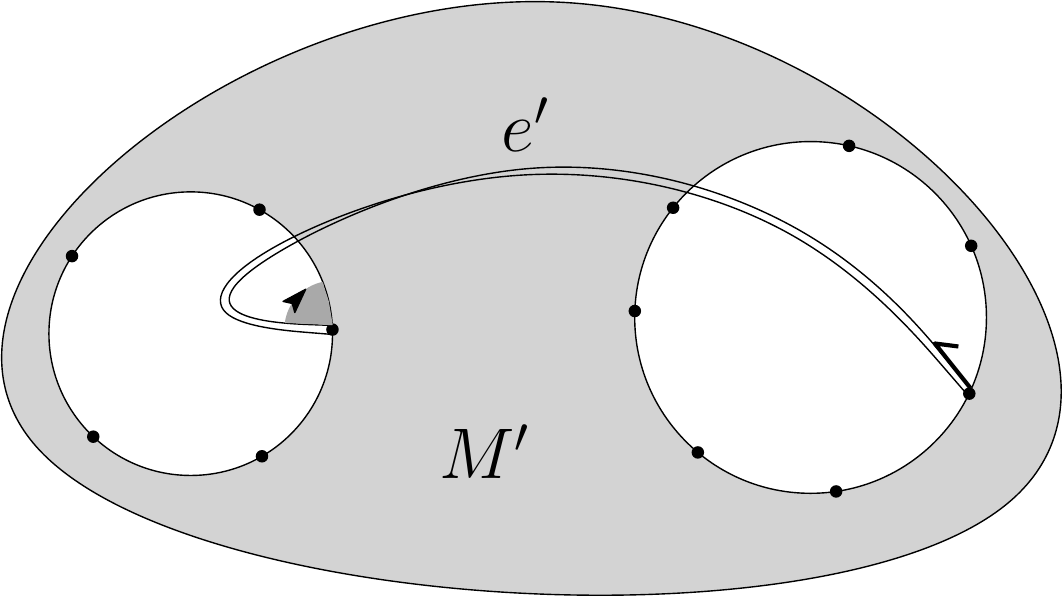}}
\caption{\protect\subref{subfig:mapa1} represents diagrammatically a map $M$, where the root edge $e$ is a handle, and \protect\subref{subfig:mapa2} represents diagrammatically a map $M'$ obtained from $M$ by twisting its root edge $e$, i.e.~the unique map $M'$ different from $M$ such that $M\setminus\{e\} = M'\setminus\{e'\}$, where $e'$ is the root edge of $M'$ and such that two distinct corners of $M\setminus\{e\}$ containing two half-edges of $e$ are the same as   two distinct corners of $M'\setminus\{e'\}$ containing two half-edges of $e'$. Two white areas on \protect\subref{subfig:mapa1} (\protect\subref{subfig:mapa2}, respectively) represents two different faces of $M\setminus\{e\} = M'\setminus\{e'\}$ merged by a root $e$ ($e'$, respectively). To help the reader noticing the difference between $M$ and $M'$, we shade in dark grey and orient according with the root orientation the first visited corner after the root corner in both maps.}
\label{fig:TwistedMaps}
\end{figure}

We are now ready to define a statistic $\eta$ introduced by La Croix.

\begin{definition}
\label{def:MeasureOfNonOrient}{\cite[Definition 4.1]{LaCroix2009}}
A \emph{measure of non-orientability}  is an invariant $\eta(M)$
defined for all rooted maps $M$ such that for any map $M$ the invariant $\eta(M)$ associated
with it satisfies the following properties:
\begin{itemize}
\item
If $M$ has no edges, then $\eta(M) = 0$; 
\item
Otherwise, let $e$ be the root edge of $M$. We have following possibilities:
\begin{itemize}
\item
$e$ is a \emph{bridge}. Then $\eta(M) = \eta(M_1)+\eta(M_2)$, where $M\setminus \{e\} = M_1 \cup M_2$;
\item
$e$ is a \emph{border}. Then $\eta(M) = \eta(M\setminus \{e\})$;
\item
$e$ is \emph{twisted}. Then $\eta(M) = \eta(M\setminus \{e\})+1$;
\item
$e$ is a \emph{handle}. Then there exists a unique map $M'$ with the
root edge $e'$ constructed by twisting the edge $e$ in $M$ such that $e'$ is a handle and such that $M\setminus \{e\} = M'\setminus \{e'\}$. See \cref{fig:TwistedMaps}. In this case we have
\[ \{\eta(M), \eta(M')\} = \{\eta(M\setminus \{e\}), \eta(M\setminus \{e\})+1\}. \]
Moreover, at most one of $M$ and $M'$ is orientable, and its measure of
non-orientability is equal to $0$ while a measure of
non-orientability of the other (nonorientable) map is equal to $1$.
\end{itemize}
\end{itemize}
\end{definition}

\begin{remark}
Note that there are many function $\eta$ satisfying all the conditions given by \cref{def:MeasureOfNonOrient}. Thus, the above definition gives a whole class of functions, and any such function is called a measure of non-orientability.
\end{remark}

Let $M$ be a rooted map. We label all its edges according to their
appearance in the root-deletion procedure. That is, the root edge of
$M$ has label $1$, the root edge of $M\setminus\{e\}$ has label $2$,
etc. Here, there must be a convention chosen in which connected component should be treated first, after removing a bridge. Our convention is that we first decompose the connected component with the root corner that contained the root corner of the previous map. That is, we first decompose the map $M_1$ from \cref{fig:MapWithBridge}.

From now on, we are going to use the following notation: for the rooted
map $M$, and for any $1 \leq i \leq e(M)$ we denote by $e_i(M)$ the
edge with the label $i$ and we set $M^{i+1}$
for the rooted map, which is the connected component of
$M^i\setminus\{e_i(M)\}$ containing $e_{i+1}(M)$. $M^1 := M$ and
$M^{e(M)+1}$ is the unique map with no edges, by convention.

For a given positive integer $n$ and partitions $\mu, \nu, \tau \vdash
n$ we can decompose the set $\widetilde{\mathcal{M}}_{\mu,\nu}^\tau$
of maps of type $(\mu,\nu;\tau)$ in the following manner:
\begin{equation}
\label{eq:HandlesDecomp}
\widetilde{\M}_{\mu,\nu}^\tau = \bigcup_{i \geq 0}\widetilde{\M}_{\mu,\nu;i}^\tau,
\end{equation}
where $\widetilde{\M}_{\mu,\nu;i}^\tau$ is the set of maps of type
$(\mu,\nu;\tau)$ such that exactly $i$ handles appeared during their
root-deletion process. In other words, it is the set of rooted maps
$M$ of type $(\mu,\nu;\tau)$ such that for all natural numbers $k \in \N$, except $i$, the root of $M^k$ is not a handle.
We call maps from the set $\widetilde{\M}_{\mu,\nu;0}^\tau$ \emph{unhandled}.
Finally, we denote the finite set $\{1,2,\dots,n\}$ by $[n]$. Here, we
present a classical, but important for us, relation between a genus of
a given map and its root-deletion procedure.

\begin{lemma}
\label{lem:NrOfHandles}
Let $M \in \widetilde{\M}_{\mu,\nu;i}^\tau$ be a rooted map with $n$ edges, and let $j(M)$ denotes the number of positive integers $j \in [n]$ for which the root of $M^j$ is twisted. Then the following equality holds true:
\begin{equation}
\label{eq:pomocnicze}
j(M) + 2i = 2g(M),    
\end{equation}
where $g(M)$ is a genus of the map $M$.
\end{lemma}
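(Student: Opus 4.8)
The plan is to argue by induction on the number of edges $n = e(M)$, following the recursive structure of the root-deletion procedure. In the base case $n = 0$ the map is a single vertex on the sphere, so $g(M) = 0$ and $j(M) = i = 0$, and \eqref{eq:pomocnicze} holds trivially. For the inductive step I would let $e = e_1(M)$ be the root edge and split into the four cases of \cref{def:MeasureOfNonOrient} according to whether $e$ is a bridge, a border, a twisted edge, or a handle, relating $g(M)$, $j(M)$ and $i$ to the corresponding quantities of the smaller map(s) obtained after deleting $e$.

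The computational heart is to read off, from the Euler formula \eqref{eq:euler}, how the genus changes under a single root-edge deletion. Writing $2g(M) = e(M) - v(M) - f(M) + 2$ and noting that deletion always leaves $v$ unchanged and lowers $e$ by one, one gets $2g(M) - 2g(M\setminus\{e\}) = 1 + \bigl(f(M\setminus\{e\}) - f(M)\bigr)$. By the very definitions of the edge types the bracket equals $-1$, $0$, $+1$ for a border, a twisted edge, and a handle respectively, giving genus jumps $0$, $1$, $2$. In the bridge case $M\setminus\{e\} = M_1 \sqcup M_2$ and one additionally checks that $v(M) = v(M_1) + v(M_2)$, $e(M) = e(M_1) + e(M_2) + 1$ and --- since a bridge is incident to a single face on both of its sides --- $f(M) = f(M_1) + f(M_2) - 1$; feeding this into \eqref{eq:euler} yields the additivity $2g(M) = 2g(M_1) + 2g(M_2)$.

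With these relations in hand the induction closes case by case. A border contributes nothing to $j$ or $i$ and does not change $2g$; a twisted edge adds $1$ to $j(M)$ and $1$ to $2g(M)$; a handle adds $1$ to $i$ and $2$ to $2g(M)$; in each of these three cases the classification of the remaining edges in the process for $M$ coincides with their classification in the standalone process for $M\setminus\{e\}$, so the inductive hypothesis applies directly. For a bridge, the root-deletion recursion on $M$ decomposes (by the convention fixing which component is treated first) into the independent recursions on $M_1$ and $M_2$, whence $j(M) = j(M_1)+j(M_2)$ and $i = i(M_1)+i(M_2)$; combining this with the genus additivity and the two inductive hypotheses gives \eqref{eq:pomocnicze}. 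The step I expect to require the most care is the bridge case: one must justify both the face-merging identity $f(M)=f(M_1)+f(M_2)-1$ and the claim that the labelled deletion process genuinely restricts to the two component processes, so that the statistics $j$ and $i$ are additive across the split.
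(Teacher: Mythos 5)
Your proposal is correct and takes essentially the same approach as the paper: induction on the number of edges, a case analysis over the four root-edge types, and the Euler formula \eqref{eq:euler} to read off the genus increment in each case (including the additivity $2g(M)=2g(M_1)+2g(M_2)$ and of $j$, $i$ in the bridge case). The only differences are cosmetic --- you start the induction at the edgeless map instead of the one-edge map, and you spell out the Euler-formula bookkeeping that the paper declares ``straightforward''.
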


\begin{proof}
We are going to prove that \cref{eq:pomocnicze} holds true for all maps by an induction on the number of edges of $M$.

It is straightforward to check that there is only one rooted, bipartite map with one edge. Its root edge is a bridge and it is planar (i.e. its genus is equal to $0$). Thus, \cref{eq:pomocnicze} holds true in this case.
Now, we fix $n \geq 2$ and we assume that \cref{eq:pomocnicze} holds true for all maps with at most $n-1$ edges. Let $M \in \widetilde{\M}_{\mu,\nu;i}^\tau$ be a map with $n$ edges and let $i(M)$ denotes the number of handles appearing in the root-deletion process of $M$, i.e.~ $i(M) = i$. We are going to analyze how the Euler characteristic varies during the root-deletion process. It is straightforward from the classification of root edges and from \cref{eq:euler} that we have following possibilities:
\begin{itemize}
\item
$e$ is a \emph{bridge}. Then $g(M) = g(M_1)+g(M_2), i(M) = i(M_1) + i(M_2)$, and $j(M) = j(M_1) + j(M_2)$, where $M\setminus \{e\} = M_1 \cup M_2$. Thus, by an inductive hypothesis
\begin{multline*}
j(M) + 2i(M) = j(M_1) + j(M_2) + 2\left(i(M_1) + i(M_2)\right) = 2g(M_1) + 2g(M_2) = 2g(M);    
\end{multline*}
\item
$e$ is a \emph{border}. Then $g(M) = g(M\setminus \{e\}), i(M) = i(M\setminus \{e\})$, and $j(M) = j(M\setminus \{e\})$. Thus, by an inductive hypothesis
\begin{multline*}
j(M) + 2i(M) = j(M\setminus \{e\}) + 2i(M\setminus \{e\}) = 2g(M\setminus \{e\}) = 2g(M);
\end{multline*}
\item
$e$ is \emph{twisted}. Then $g(M) = g(M\setminus \{e\})+1/2, i(M) = i(M\setminus \{e\})$, and $j(M) = j(M\setminus \{e\})+1$. Thus, by an inductive hypothesis
\begin{multline*}
j(M) + 2i(M) = j(M\setminus \{e\}) + 2i(M\setminus \{e\}) + 1= 2g(M\setminus \{e\}) + 1 = 2g(M);
\end{multline*}
\item
$e$ is a \emph{handle}. Then $g(M) = g(M\setminus \{e\})+1, i(M) = i(M\setminus \{e\})+1$, and $j(M) = j(M\setminus \{e\})+1$. Thus, by an inductive hypothesis
\begin{multline*}
j(M) + 2i(M) = j(M\setminus \{e\}) + 2i(M\setminus \{e\}) + 2= 2g(M\setminus \{e\}) + 2 = g(M).
\end{multline*}
\end{itemize}
Since these are all possible cases, we proved by induction that \cref{eq:pomocnicze} holds true for any rooted, bipartite map $M$, which finishes the proof.
\end{proof}

\begin{corollary}
\label{cor:EtaDegree}
Let $M \in \widetilde{\M}_{\mu,\nu;i}^\tau$ be a rooted map with $n$ edges. Then 
\begin{multline*}
0 \leq n+2 - \left(\ell(\mu)+\ell(\nu)+\ell(\tau)\right) - 2i
\leq \eta(M) \leq n+2 - \left(\ell(\mu)+\ell(\nu)+\ell(\tau)\right) - i.
\end{multline*}
\end{corollary}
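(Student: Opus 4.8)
The plan is to read the desired bound off the genus of $M$ together with the recursive structure of both the statistic $\eta$ and the root-deletion process. A map $M\in\widetilde{\M}_{\mu,\nu;i}^\tau$ has $e(M)=n$ edges, $v(M)=\ell(\mu)+\ell(\nu)$ vertices, and $f(M)=\ell(\tau)$ faces, so the Euler formula \cref{eq:euler} gives
\[ 2g(M) \;=\; n+2-\bigl(\ell(\mu)+\ell(\nu)+\ell(\tau)\bigr) \;=:\; D. \]
Feeding this into \cref{lem:NrOfHandles} yields $j(M)=2g(M)-2i=D-2i$, where $j(M)$ denotes the number of twisted edges encountered during the root-deletion of $M$. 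Since $j(M)$ is manifestly a nonnegative integer, this already gives the leftmost inequality $0\le D-2i$. Moreover, after substituting $j(M)=D-2i$, the two remaining inequalities become equivalent to
\[ j(M)\ \le\ \eta(M)\ \le\ j(M)+i, \]
so the whole statement reduces to this two-sided estimate.

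To establish it I would argue by induction on $e(M)$, in parallel with the proof of \cref{lem:NrOfHandles}, tracking the triple $\bigl(j(M),\,i,\,\eta(M)\bigr)$ through the four types of root edge classified just before \cref{def:MeasureOfNonOrient}. The base case of the edgeless map is immediate, as all three quantities vanish. For the inductive step, a border leaves $j$, $i$, and $\eta$ all unchanged; a twisted root edge raises both $j$ and $\eta$ by $1$ and fixes $i$; a handle fixes $j$, raises $i$ by $1$, and raises $\eta$ by either $0$ or $1$ (by \cref{def:MeasureOfNonOrient}); and a bridge makes each of $j$, $i$, and $\eta$ additive over the two components $M_1,M_2$ of $M\setminus\{e\}$. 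In every case the two target inequalities for $M$ follow from the inductive hypothesis applied to the smaller map(s) by a one-line arithmetic verification.

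Conceptually the estimate expresses that $\eta(M)$ always contains the fixed contribution $j(M)$ coming from the twisted edges, while each of the $i$ handles contributes an \emph{a priori} undetermined amount lying in $\{0,1\}$; ranging over these contributions across the $i$ handles produces exactly the interval between $j(M)$ and $j(M)+i$. The handle case is thus the only genuinely delicate point, being precisely where \cref{def:MeasureOfNonOrient} leaves the increment of $\eta$ unspecified, and this indeterminacy is what forces a window of width $i$ rather than a single value. The bridge case requires only the additivity of $\eta$ from \cref{def:MeasureOfNonOrient} together with the additivity of $j$ and of $i$ already recorded in the proof of \cref{lem:NrOfHandles}, and the border and twisted cases are immediate.
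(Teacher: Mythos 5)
Your proposal is correct and takes essentially the same route as the paper: both deduce $2g(M)=n+2-\bigl(\ell(\mu)+\ell(\nu)+\ell(\tau)\bigr)$ from \cref{eq:euler}, invoke \cref{lem:NrOfHandles} to get $j(M)=2g(M)-2i\ge 0$, and then conclude via the two-sided bound $j(M)\le\eta(M)\le j(M)+i$. The only difference is that the paper declares this last bound an ``obvious inequality'' coming straight from \cref{def:MeasureOfNonOrient}, whereas you supply the short induction on the number of edges that justifies it---a harmless elaboration of the same argument.
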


\begin{proof}
The Euler formula given by \cref{eq:euler} yields
\[2g(M) = 2 + e(M) - f(M) - v(M) = n + 2 - \left(\ell(\mu)+\ell(\nu)+\ell(\tau)\right).\]
Combining it with \cref{eq:pomocnicze} we have the following formula:
\[ 0 \leq j(M) = n + 2 - \left(\ell(\mu)+\ell(\nu)+\ell(\tau)\right) - 2i(M).\]
It is now enough to notice an obvious inequality which comes strictly
from the definition \cref{def:MeasureOfNonOrient} of $\eta$:
\begin{multline*}
0 \leq n + 2 - \left(\ell(\mu)+\ell(\nu)+\ell(\tau)\right) - 2i(M) = j(M) \\
\leq \eta(M) \leq j(M) + i(M) = n + 2 - \left(\ell(\mu)+\ell(\nu)+\ell(\tau)\right) - i(M),    
\end{multline*}
which finishes the proof.
\end{proof}

\subsection{Twist involution}

Let $M$ be a rooted map such that its root edge $e$ is a handle. We
recall that there exists the unique rooted map $M'$ different from
$M$ with the root edge $e'$ which is a handle, too, and such that the
half-edges belonging to $e$ are lying in the same corners of $M
\setminus \{e\} = M' \setminus \{e'\}$, as the half-edges belonging to
$e'$. Notice that the map $M'$ is, roughly speaking, obtained from $M$
by ``twisting'' its root. In this section we are going to formalize
and generalize the concept of ``twisting edges''.

\begin{definition}
\label{def:twist}
Let $M$ be a rooted map with $n$ edges and let us fix an integer $ i
\in [n]$. We denote by $h_i(M)$ the root of $M^i$ and by $h_i(M)'$
the second half-edge belonging to $e_i(M)$. Let $c_1$ and
$\tilde{c}_1$ be two corners adjacent to $h_i(M)$ and oriented towards
$h_i(M)'$. We denote by $\tau_iM$ the map whose ribbon graph is
obtained from the ribbon graph of $M$ by ``twisting'' the edge
$e_i(M)$. That is, by connecting the half-edges $h_i(M)$ and $h_i(M)'$
in the (unique!) different way than they are connected in $M$. One can
describe this construction in a more formal way as follows. Let $c_2$
($\tilde{c}_2$, respectively) be the unique oriented corner adjacent
to $h_i(M)'$, which is the first corner visited after $c_1$
($\tilde{c}_1$, respectively) -- see \cref{subfig:twist1}. There
exists a unique map $\tau_i M$ obtained from $M$ by replacing the
edge $e_i(M)$ by the edge $e_i'$ connecting $h_i(M)$ with $h_i(M)'$ in
$\tau_iM$such that the oriented corner adjacent to $h_i(M)'$ and visited after $c_1$ ($\tilde{c}_1$, respectively) is the corner $\tilde{c}_2$ ($c_2$, respectively) -- see \cref{subfig:twist2}. We call the operator $\tau_i$ \emph{twisting} of $i$-th edge of $M$.
\end{definition}

\begin{remark}
Note that $M$ and $\tau_i M$ are the same graphs (thus the sets
$E(M)$, and $E(\tau_i M)$ are the same, and it makes sense to compare
properties of an edge $e$ in $M$ to its properties in $\tau_i M$), but
it is not true in general that $\tau_i \tau_j M$ is the same map as
$\tau_j \tau_i M$. The following proposition resolves when the
twisting operators commute.
\end{remark}

\begin{figure}
\centering
\subfloat[]{
	\label{subfig:twist1}
	\includegraphics[width=0.47\linewidth]{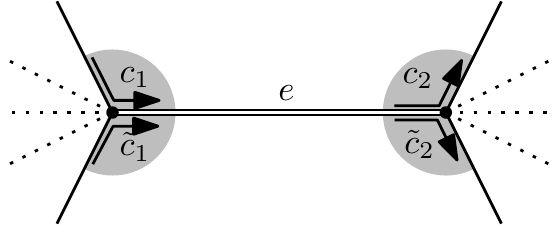}}
\quad
\subfloat[]{
	\label{subfig:twist2}
	\includegraphics[width=0.47\linewidth]{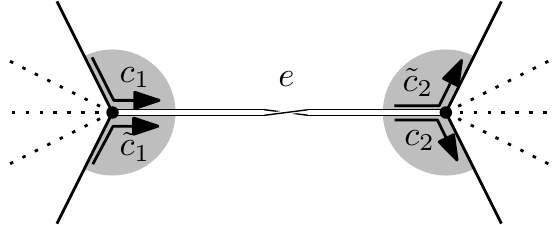}}
\caption{}
\label{fig:twist}
\end{figure}

\begin{proposition}
\label{prop:blabla}
Fix a positive integer $n$, partitions $\mu,\nu \vdash n$, and a map $M$ with $n$ edges. Then
\begin{enumerate}
\item 
for any $i \in [n]$ the operator $\tau_i$ is an involution on the set of maps with black (white, respectively) vertex distribution $\mu$ ($\nu$, respectively),
\item
let $I = \{i_1 < \cdots < i_k\}$ be a non-empty subset of $[n]$ such
that for all $i \in I$ the root edge of $M^i$ is not a bridge. Then, for any permutation $\sigma \in \Sym{I}$ the map $\tau_{\sigma(i_1)}\cdots\tau_{\sigma(i_k)}M$ is the same, the labels of the edges in $M$ and in $\tau_{\sigma(i_1)}\cdots\tau_{\sigma(i_k)}M$ coincide, and for any $j \in [n]$ the root edge of $M^j$ is a bridge iff the root edge of $(\tau_{i_1}\cdots \tau_{i_k} M)^j$ is a bridge.
\end{enumerate}
\end{proposition}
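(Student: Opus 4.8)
The plan is to work entirely within the ribbon-graph (band) model of \cref{subsect:Maps}, where twisting an edge has a transparent local meaning, and to reduce the whole statement to a single structural lemma about the root-deletion labeling. I first record three elementary facts about a twist $\tau_i$. \textbf{(F1)} Twisting the band $e_i(M)$ does not change the underlying graph: the half-edges stay in their corners, so the cyclic order of half-edges around every vertex, the incidences, the two-coloring and the connectivity of $M$ are all preserved. In particular $\tau_i$ preserves the black and white vertex distributions $\mu,\nu$, keeps the root vertex black, and preserves the bridge/non-bridge status of \emph{every} edge, since being a bridge is a connectivity property of the graph alone. \textbf{(F2)} A band can be attached in exactly two ways and $\tau_i$ exchanges them, so $\tau_i$ is locally an involution. \textbf{(F3)} Deleting a band destroys its twist: for any map $N$ carrying the edge $e_i$ one has $(\tau_iN)\setminus\{e_i\}=N\setminus\{e_i\}$, because only the attachment of the removed band differed. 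Part~(1) is then immediate once we know that the edge carrying label $i$ in $\tau_iM$ is again $e_i(M)$: combined with (F2) this gives $\tau_i\tau_iM=M$, and (F1) gives preservation of $\mu,\nu$. Thus the whole proposition rests on controlling the root-deletion labeling.

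The key lemma I would isolate is: \emph{twisting an edge $e$ does not alter the root-deletion procedure at any step at which $e$ is not the root edge.} Concretely, the labels, the rooted submaps $M^j$ (away from the band $e$), and the bridge classification agree for $M$ and for the map obtained by flipping $e$, for every step $j$ up to the one at which $e$ first becomes the root edge. I would prove this by induction on the deletion steps, tracking at step $j$ the root edge $e_j$, its bridge status (preserved by (F1)), and the root corner(s) of the resulting component(s). The only delicate point — and the step I expect to be the main obstacle — is the choice of root corner for the \emph{second} component when $e_j$ is a bridge, since its definition traverses the root face of $M^j$, a global object that a twist elsewhere can deform. The resolution is the observation that $e_j$ is the root edge of $M^j$ and is therefore incident to the root corner: following the root face from the root corner, the first corner one meets lies directly across the bridge $e_j$, hence in the other component, so it is determined solely by $e_j$ and the rotation at its far endpoint and is untouched by any twist of a band other than $e_j$. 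Every other rooting choice is purely local at the root vertex (the merging of corners after deleting $e_j$), so it too is insensitive to a twist elsewhere. The induction then runs through the non-bridge case (single component, local re-rooting) and the bridge case (two components, the first re-rooted locally and the second re-rooted across $e_j$), in each subcase carrying the flipped band along.

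Granting the lemma, Part~(1) follows at once: applied with $e=e_i(M)$, it shows that steps $1,\dots,i-1$ of the deletion of $\tau_iM$ reproduce the root edges $e_1,\dots,e_{i-1}$ and the correct component at each bridge, so that $e_i$ is reached as the root edge at step $i$, i.e. it carries label $i$ in $\tau_iM$; no assumption on $e_i$ is needed here. For a single $i\in I$ the lemma gives agreement of labels and bridge classifications up to step $i$, and since $e_i$ is non-bridge we have $M^{i+1}=M^i\setminus\{e_i\}$, which by (F3) coincides with $(\tau_iM)^{i+1}$; hence the two labelings and classifications agree for \emph{all} steps. This is the single-twist version of Part~(2).

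For the full statement I would introduce, for each $J\subseteq I$, the map $M_J$ obtained from $M$ by flipping exactly the bands $\{e_i : i\in J\}$. Because every edge indexed by $I$ is non-bridge, the same induction as above shows that the labeling and bridge classifications of $M_J$ agree with those of $M$ for every $J$ (at no step do we twist a bridge root edge, so the delicate second-component rooting is always read across an untwisted bridge). Consequently $M_J$ is unambiguously defined and the operator $\tau_i$ acting on $M_J$ flips precisely the band $e_i$, that is $\tau_i(M_J)=M_{J\,\triangle\,\{i\}}$. Since flips of distinct bands are commuting involutions, $\tau_{\sigma(i_1)}\cdots\tau_{\sigma(i_k)}M=M_I$ for every $\sigma\in\Sym{I}$, which is the asserted order-independence; the coincidence of the edge labels and of the bridge classifications between $M$ and $M_I=\tau_{i_1}\cdots\tau_{i_k}M$ is exactly what the lemma supplies along the way.
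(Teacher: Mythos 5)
Your proposal is correct, but it takes a genuinely different route from the paper's own proof. The paper handles part (1) by asserting, essentially without argument, that twisting the $i$-th edge does not change the labels of the first $i$ edges and can only disturb later labels when $e_i(M)$ is a bridge in $M^i$; part (2) is then an induction on $|I|$ in which the permuted product $\tau_{\sigma(i_1)}\cdots\tau_{\sigma(i_k)}M$ is rewritten as $\tau_{i_l}$ applied to a shorter product, and the inductive hypothesis is invoked to swap operators with distinct indices one at a time. You instead isolate a single deletion-step lemma --- twisting a band does not perturb the root-deletion procedure at any step before that band becomes the root edge --- prove it by induction on the steps of the procedure, and then package the multi-twist statement in the flip-set notation $M_J$, so that $\tau_i(M_J)=M_{J\,\triangle\,\{i\}}$ and order-independence follows from the toggling structure rather than from operator commutation. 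Your organization buys two things. First, your lemma makes explicit, and correctly resolves, the one genuinely delicate point that the paper passes over in silence: the re-rooting of the \emph{second} component after deleting a bridge is defined by walking the root face of $M^j$, a global object that a twist elsewhere could in principle deform; your observation that the first corner after the root corner lies directly across the bridge band itself, combined with the fact that bridge positions never lie in $I$ (so that band is never flipped), is precisely what makes the whole statement true. Second, the symmetric-difference bookkeeping replaces the paper's permutation-extraction induction --- in which the claim that swapping operators preserves labels is itself another appeal to the inductive hypothesis --- by a transparent toggling argument that yields all $k!$ orderings at once. The modest price is that your key lemma must be strengthened to simultaneous flips of several bands in order to define $M_J$ unambiguously, which you state and justify; the paper's induction on $|I|$ never needs to formulate that strengthening explicitly.
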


\begin{proof}
Let us fix $i \in [n]$. It is clear from the construction that the
operator $\tau_i$ preserves white and black vertex distributions. Now,
notice that twisting an $i$-th edge in any map $M$ does not change the
labels of the first $i$ edges and it may change the labels of the
other edges only if the edge $e_i(M)$ is a bridge in $M^i$. Thus, the
first $i$ labels of the edges in both maps $M$ and $\tau _i M$ are the
same, so $\tau_i^2 M = M$.

We are going to prove the second item by an induction on size of
the set $I$. We have already proved above that the inductive assertion
holds true for $|I| = 1$, so let us fix an integer $1 < k \leq n$. We assume that the inductive assertion holds true for all subsets $I
\subset [n]$ of size smaller then $k$. Let $I = \{i_1 < \cdots <
i_k\}$ be a non-empty subset of $[n]$ such that for all $i \in I$ the
root edge of $M^i$ is not a bridge and let $\si \in \Sym{I}$.
Then, there exists an integer $l \in [k]$ and a permutation $\pi \in \Sym{[k]\setminus\{l\}}$ such that
\[ \tau_{\sigma(i_1)}\cdots\tau_{\sigma(i_k)}M = \tau_{i_l} \left(
  \tau_{i_{\pi(1)}}\cdots\widehat{\tau_{i_{\pi(l)}}}\cdots\tau_{i_{\pi(k)}}
  M\right),\]
where we use a standard notation that the word $a_1\cdots
a_{i-1}\widehat{a_{i}} a_{i+1}\cdots a_n$ is obtained from the word $a_1\cdots
a_{i-1} a_{i} a_{i+1}\cdots a_n$ by removing the letter $a_i$.
If $l = 1$, then the labels in $M$ and $\left(
  \tau_{i_{\pi(1)}}\cdots\widehat{\tau_{i_{\pi(l)}}}\cdots\tau_{i_{\pi(k)}}
  M\right)$ coincide, and the root edge of $\left(
  \tau_{i_{\pi(1)}}\cdots\widehat{\tau_{i_{\pi(l)}}}\cdots\tau_{i_{\pi(k)}}
  M\right)^{i_1}$ is not a bridge since the root edge of $M^{i_1}$ is
not a bridge, by the inductive assertion. Thus, the labels in $M$, and in $\tau_{\sigma(i_1)}\cdots\tau_{\sigma(i_k)}M$ coincide, too.
If $l > 1$, then the inductive assertion says that
the labels of $M$ and $\tau_{j_1}\cdots\tau_{j_m} M$ are the same for
all subsets $\{j_1,\dots,j_m\} \subset I$ of size $m<k$, so
\begin{multline} 
\label{eq:takietam}
\tau_{i_l}\left(
  \tau_{i_{\pi(1)}}\cdots\widehat{\tau_{i_{\pi(l)}}}\cdots\tau_{i_{\pi(k)}}
  M\right) = \tau_{i_l}\left(\tau_{i_1}\cdots \widehat{\tau_{i_l}}\cdots \tau_{i_k} M\right) \\
= \tau_{i_1}\left(\tau_{i_l}\tau_{i_2}\cdots \widehat{\tau_{i_l}}\cdots \tau_{i_k} M\right) = \tau_{i_1}\cdots \tau_{i_k} M
\end{multline}
by the inductive hypothesis.
Moreover, again by the inductive hypothesis, the labels in the maps were not changed when we were swapped operators $\tau$ with different indices.
Finally, for any $j \in [n]$, the graphs $M^j$ and
$\left(\tau_{i_1}\cdots \tau_{i_k} M\right)^j$ are the same. Since being a bridge is the same as being a disconnecting edge of the graph, the proof is finished.
\end{proof}

\begin{lemma}
\label{lem:involution}
Let $\eta$ be a measure of non-orientability. Then, for all positive integers $n,i$ and partitions $\mu,\nu \vdash n$, there exists an involution $\si_\eta$ on the set $\widetilde{\M}_{\mu,\nu;i}^{(n)}$, which has the property that 
\[(-1)^{\eta(\si_\eta(M))} = (-1)^{\eta(M)+1}.\]
Moreover, for each $M \in \widetilde{\M}^{(n)}_{\mu,\nu; i}$ there exist natural numbers $1 \leq i_1 < \cdots < i_k \leq n$ such that $\si_\eta(M) = \tau_{i_k}\cdots\tau_{i_1}M$ and such that for each $j \in [k]$ the root of $M^{i_j}$ is not a bridge.
\end{lemma}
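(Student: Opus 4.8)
The plan is to construct $\si_\eta$ by induction on the number of edges $n$, classifying the root edge $e_1 = e_1(M)$ of $M$ according to \cref{subsec:TypesOfEdges}. Since $M$ is unicellular (the face distribution is $(n)$, so there is a single face), deleting a non-bridge root edge can never decrease the number of faces below $1$; hence $e_1$ is never a \emph{border}, and it is a handle, a twisted edge, or a bridge. Throughout I will use the elementary ``band'' description of face counts: attaching the ribbon of an edge $e$ at two corners lying on a single face boundary produces one face if the ribbon carries a half-twist and two faces otherwise, \emph{independently of the positions of the two corners}; while if the two corners lie on two distinct faces, then both ways of attaching $e$ merge them into one face. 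Equivalently, for a non-bridge edge $e$, twisting it preserves the number of faces exactly when $e$ is a handle, which is precisely the content of the handle clause of \cref{def:MeasureOfNonOrient}. Because $i\geq 1$, at least one handle occurs in the root-deletion process, which guarantees that the recursion below always terminates in the handle case.

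When $e_1$ is a handle I set $\si_\eta(M) := \tau_1 M$. By \cref{def:MeasureOfNonOrient}, $\tau_1 M$ again has $e_1$ as a handle, satisfies $\tau_1 M\setminus\{e_1\} = M\setminus\{e_1\}$, and obeys $\{\eta(M),\eta(\tau_1 M)\} = \{\eta(M\setminus\{e_1\}),\eta(M\setminus\{e_1\})+1\}$; hence $(-1)^{\eta(\tau_1 M)} = (-1)^{\eta(M)+1}$, the map stays unicellular with the same vertex distributions, and the number of handles in its root-deletion process is unchanged, so $\tau_1 M\in\widetilde{\M}^{(n)}_{\mu,\nu;i}$. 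As $\tau_1$ is an involution by \cref{prop:blabla} and $e_1$ remains the root handle, this is an involution in the handle case. When $e_1$ is twisted, $M^2 = M\setminus\{e_1\}$ is unicellular with $n-1$ edges and the same number $i$ of handles; I apply the inductive involution $\si_\eta$ on $\widetilde{\M}^{(n-1)}_{\mu',\nu';i}$ (for the vertex distributions $\mu',\nu'$ of $M^2$) and lift it to $M$ by shifting all twisting indices by one. The crucial point is that $\si_\eta(M^2)$ is again unicellular, so re-attaching the (unchanged, half-twisted) ribbon of $e_1$ to its single face produces again a single face, by the position-independence above; thus $e_1$ stays twisted, $\si_\eta(M)\setminus\{e_1\} = \si_\eta(M^2)$, and $\eta(\si_\eta(M)) = \eta(\si_\eta(M^2))+1$, which flips the parity by the inductive hypothesis. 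When $e_1$ is a bridge, $M\setminus\{e_1\} = M_1\sqcup M_2$ with both $M_j$ unicellular and $i = i_1+i_2\geq 1$; I apply the inductive involution to the first component carrying a handle (to $M_1$ if $i_1\geq 1$, otherwise to $M_2$) and leave the other component fixed. Re-attaching the bridge keeps it a bridge and keeps the map unicellular, the additivity $\eta = \eta(M_1)+\eta(M_2)$ flips the parity, and the choice of component is stable under $\si_\eta$ because the component handle-numbers are preserved, so this too is an involution.

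In every case $\si_\eta(M)$ is obtained from $M$ by composing twists $\tau_{i_j}$ at positions where the corresponding root edge is not a bridge (a handle and a twisted edge are never bridges, and the lifting in the twisted case and the restriction to a component in the bridge case only relabel these non-bridge indices), so the output has the asserted form $\tau_{i_k}\cdots\tau_{i_1}M$ with $1\leq i_1<\cdots<i_k\leq n$ and the root of each $M^{i_j}$ not a bridge; in fact this composition collapses to the single twist of the first handle occurring in the root-deletion order. Finally, the identity $(-1)^{\eta(\si_\eta(M))} = (-1)^{\eta(M)+1}$ shows that $\si_\eta$ has no fixed points, as required.

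I expect the main obstacle to be the well-definedness, namely checking that $\si_\eta$ really maps $\widetilde{\M}^{(n)}_{\mu,\nu;i}$ into itself and squares to the identity. Both issues reduce to controlling how twisting a deeper non-bridge edge affects the \emph{type} of the current root edge and the total face count. The clean resolution is the position-independence of the band attachment on a single face: twisting edges inside a unicellular submap cannot turn the current root edge from twisted into a border (which would destroy unicellularity), cannot turn a bridge into a non-bridge (its graph is unchanged), and therefore preserves both the face distribution $(n)$ and the handle-number $i$. The commutation and label-preservation statements of \cref{prop:blabla} are exactly what allow me to pass freely between $M$, its deletions $M^j$, and the twisted maps while keeping the indices $i_1<\cdots<i_k$ well defined and the classification of edges coherent along the induction.
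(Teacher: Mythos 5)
Your overall strategy (induction on $n$, case analysis on the type of the root edge, lifting the involution from $M^2$ by shifting the twisting indices by one) is the same as the paper's, but your treatment of the twisted case has a genuine gap, and it is exactly the point where the paper's proof must work harder. You claim that since $\si_\eta(M^2)$ is again unicellular, re-attaching the unchanged ribbon of $e_1$ to it must again produce a single face ``by position-independence of the band attachment'', so that $e_1$ stays twisted. This principle is false as you apply it. What is true is that, for a \emph{fixed} unicellular host map and a fixed pair of corners, one of the two ways of attaching an edge yields one face and the other yields two; but \emph{which} attachment does which is determined by the face walk of the host, not intrinsically by the ribbon. Twisting a deeper edge reverses a segment of the face walk, and if exactly one of the two attachment corners of $e_1$ lies in the reversed segment, the unchanged attachment of $e_1$ switches type between ``twisted'' and ``border''. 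The paper's own example in \cref{subsec:example} exhibits precisely this phenomenon: the edge $e_2(M)$ is a border in $M^2$, yet after twisting the deeper edge $e_3(M)$ the very same attachment becomes a twisted edge (see \cref{subfig:m2}); symmetrically, a twisted edge can become a border. Consequently your candidate $\si_\eta(M)$ may have two faces and fall outside $\widetilde{\M}^{(n)}_{\mu,\nu;i}$, and your closing remark that twisting edges inside a unicellular submap ``cannot turn the current root edge from twisted into a border (which would destroy unicellularity)'' is circular: it can, and then the constructed map simply fails to be unicellular.

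The paper repairs this by considering \emph{two} candidates in the twisted case, $M_1 = \tau_{i_k+1}\cdots\tau_{i_1+1}M$ and $M_2 = \tau_{i_k+1}\cdots\tau_{i_1+1}\tau_1 M$. Both yield the same map $\si_\eta(M^2)$ after root deletion, and exactly one of them is unicellular with twisted root edge (the other has two faces and its root is a border); $\si_\eta(M)$ is defined to be that one. This also shows that your parenthetical claim that the composition ``collapses to the single twist of the first handle'' is wrong: in general $\si_\eta(M)$ requires additional twists at twisted edges above the first handle. Note that the possible extra factor $\tau_1$ acts at a twisted edge, hence not at a bridge, so the ``moreover'' clause of \cref{lem:involution} is preserved. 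Your handle and bridge cases coincide with the paper's and are fine; only the twisted case needs this two-candidate correction.
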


\begin{proof}
We are going to construct $\si_\eta$ by induction on $n$. For $n = 1$ all rooted bipartite maps with $n$ edges are unhandled so we set $\si_\eta$ as an empty map. 

We denote by $i(M)$ the number of handles appearing in the
root-deletion process of $M$. We fix $n \geq 1$ and we assume that the
involution $\si_\eta$ is already defined for all unicellular maps with
at most $n$ edges which are not unhandled. Let $M$ be a unicellular
map with $n+1$ edges. Since $M$ has a unique face, $e_1(M)$ cannot be
a border so there are the following possibilities:
\begin{itemize}
\item
$e_1(M)$ is a handle. In this case we set $\si_\eta(M) = \tau_1 M$. Clearly $(-1)^{\eta(M)} = (-1)^{\eta(\si_\eta(M))+1}$ and $i(M) = i(\si_\eta(M))$;
\item
$e_1(M)$ is twisted. In this case $M^2$ is a unicellular map with
$n$ edges. Thus, by the inductive hypothesis there exist natural numbers
$1 \leq i_1 < \cdots < i_k \leq n$ such that $i(M^2) = i(\tau_{i_k}
\cdots \tau_{i_1}M^2)$ and such that 
\[(-1)^{\eta(\tau_{i_k}\cdots\tau_{i_1}M^2)} = (-1)^{\eta(M^2)+1}.\] 

Let us consider two rooted maps $M_1 = \tau_{i_k+1}\cdots\tau_{i_1+1}M$ and $M_2 = \tau_{i_k+1}\cdots\tau_{i_1+1} \tau_1 M$. Both maps $M_1$ and $M_2$ have a property that after deletion of its root we obtain the same rooted map $M_1^2 = M_2^2 = \tau_{i_k}\cdots\tau_{i_1}M^2$ (\cref{prop:blabla} asserts that the labels in $M_1^2$ and in $M_2^2$, respectively, correspond to the labels of $M_1$ and $M_2$, respectively, shifted by $1$). Thus, exactly one map from $M_1$ and $M_2$ is a map $M'$ with the unique face (and its root is twisted) while the second one has two faces (and its root is a border) and we set $\si_\eta(M) = M'$. Strictly from the construction, one has
\[(-1)^{\eta(\si_\eta(M))} = (-1)^{\eta(\si_\eta(M^2))+1} =
(-1)^{\eta(M^2)} = (-1)^{\eta(M) + 1} \]
and 
\[i(M) = i(M^2) = i(\si_\eta(M^2)) = i(\si_\eta(M)^2) = i(\si_\eta(M));\]
\item
$e_1(M)$ is a bridge. In this case $M^2 = M_1 \cup M_2$ is a disjoint
sum of two unicellular maps (and we recall the convention for
labeling: the edge $e_2(M)$ belongs to $M_1$). If $M_1$ is not
unhandled, then there exists a positive integer $k$ and natural
numbers $1 \leq i_1 < \cdots < i_k \leq e(M_1)$ such that
$i(\tau_{i_k}\cdots\tau_{i_1}M_1) = i(M_1)$ and such that 
\[(-1)^{\eta(\tau_{i_k}\cdots\tau_{i_1}M_1)} = (-1)^{\eta(M_1)+1}.\] 
In this case we set $\si_\eta(M) := \tau_{i_k+1}\cdots\tau_{i_1+1}M$
and since $\si_\eta(M)^2 = \si_\eta(M_1) \cup M_2$ it
is clear that $(-1)^{\eta(\si_\eta(M))} = (-1)^{\eta(M)+1}$ and
\[i(M) = i(M_1)+i(M_2) = i(\si_\eta(M_1)) + i(M_2) = i(\si_\eta(M)).\] 
If the map $M_1$ is unhandled, then the map $M_2$ is not unhandled and there exist a positive integer $k$ and natural numbers $1 \leq i_1 < \cdots < i_k \leq e(M_2)$ such that $i(\tau_{i_k}\cdots\tau_{i_1}M_1) = i(M_1)$ and such that 
\[(-1)^{\eta(\tau_{i_k}\cdots\tau_{i_1}M_2)} = (-1)^{\eta(M_2)+1}.\] 
In this case we set $\si_\eta(M) :=
\tau_{i_k+e(M_1)+1}\cdots\tau_{i_1+e(M_1)+1}M$. Since
$\left(\si_\eta(M)\right)^2 = M_1 \cup \si_\eta(M_2)$ it is clear
that $(-1)^{\eta(\si_\eta(M))} = (-1)^{\eta(M)+1}$ and 
\[i(M) = i(M_2) = i(\si_\eta(M_2)) = i(\si_\eta(M)).\] 
\end{itemize}
Now it is straightforward from the construction and from the
inductive hypothesis that if $\si_\eta(M)$ associated with the rooted
map $M$ is of the form $\tau_{i_k}\cdots\tau_{i_1}M$, then
$\si_\eta(\si_\eta(M))$ is of the same form,
i.e. $\si_\eta(\si_\eta(M)) =
\tau_{i_k}\cdots\tau_{i_1}\si_\eta(M)$. But for each $j \in [k]$ the
root edge of $M^{i_j}$ is not a bridge. Thus
\[ \si_\eta(\si_\eta(M)) = \tau_{i_k}\cdots\tau_{i_1}(\tau_{i_k}\cdots\tau_{i_1}M) = \tau_{i_k}\cdots\tau_{i_1}(\tau_{i_1}\cdots\tau_{i_k}M) = M,\]
where the last equalities come from \cref{prop:blabla}, which finishes the proof.
\end{proof}

\subsection{Algebraic properties of a measure of non-orientability}

Let $\eta$ be a measure of non-orientability and let $\mu,\nu,\tau$
be partitions of a positive integer $n$. We define the following statistic associated with $\eta$:
\begin{equation}
\label{eq:LaCroixMeasure}
\left(H_\eta\right)_{\mu, \nu}^\tau(\beta) := \sum_{M \in \widetilde{\M}^\tau_{\mu,\nu}}\beta^{\eta(M)}.
\end{equation}

The main purpose of this section is to investigate algebraic properties of $\left(H_\eta\right)_{\mu,\nu}^\tau$ that will be of the great importance in the proof of \cref{theo:UnhandledOnefaceMaps}. From now on, we fix a positive integer $n$, partitions $\mu,\nu,\tau \vdash n$, and a measure of non-orientability $\eta$.

\begin{proposition}
\label{prop:a-ki}
Let $g := n + 2 - \left(\ell(\mu) + \ell(\nu) + \ell(\tau)\right)$. Then, for any nonnegative integer $i$, the following quantity
\begin{equation}
\label{eq:a-ki}
\left(a_\eta\right)_{\mu,\nu;i}^\tau(\beta) := \sum_{M \in \widetilde{\M}^\tau_{\mu,\nu;i}}\beta^{\eta(M)+2i-g}
\end{equation}
is a polynomial in $\beta$ of degree at most $i$.
\end{proposition}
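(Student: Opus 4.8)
The plan is to prove Proposition~\ref{prop:a-ki} by combining two facts established earlier: the degree bounds coming from \Cref{cor:EtaDegree}, and the sign-reversing involution from \Cref{lem:involution}. First I would use \Cref{cor:EtaDegree}, which states that for $M \in \widetilde{\M}^\tau_{\mu,\nu;i}$ we have $\eta(M) \leq n + 2 - (\ell(\mu)+\ell(\nu)+\ell(\tau)) - i = g - i$, and also $\eta(M) \geq g - 2i$. Substituting the upper bound gives $\eta(M) + 2i - g \leq i$, so every exponent appearing in the sum defining $\left(a_\eta\right)_{\mu,\nu;i}^\tau(\beta)$ is at most $i$; substituting the lower bound gives $\eta(M) + 2i - g \geq 0$, so every exponent is nonnegative. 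This already shows that $\left(a_\eta\right)_{\mu,\nu;i}^\tau(\beta)$ is a genuine polynomial in $\beta$ (no negative powers) of degree at most $i$, which is exactly the assertion.

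In fact, the degree bound alone gives the full statement, so the involution may not be strictly needed for Proposition~\ref{prop:a-ki} as stated; however, I would still record the consequence of \Cref{lem:involution} because it sharpens the picture and will be what is actually used downstream. The point is that \Cref{lem:involution} provides, for the one-face case $\tau = (n)$, a sign-reversing involution $\si_\eta$ on $\widetilde{\M}^{(n)}_{\mu,\nu;i}$ satisfying $(-1)^{\eta(\si_\eta(M))} = (-1)^{\eta(M)+1}$ and preserving the number of handles $i$. This means that when one specializes $\beta = -1$, the terms pair up and cancel in complementary parities. I would therefore note that, writing $\left(a_\eta\right)_{\mu,\nu;i}^{(n)}(\beta) = \sum_{j=0}^{i} c_j \beta^j$, the involution forces cancellations among terms of a fixed parity of $\eta$, which constrains the coefficients $c_j$.

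The main technical point to be careful about is the translation between the exponent $\eta(M)$ in the generating series and the shifted exponent $\eta(M) + 2i - g$ in the definition of $a$. Here I would use \Cref{lem:NrOfHandles}, namely $j(M) + 2i = 2g(M)$, together with the Euler-formula identity $2g(M) = n + 2 - (\ell(\mu)+\ell(\nu)+\ell(\tau)) = g$ valid for every $M \in \widetilde{\M}^\tau_{\mu,\nu;i}$ of this fixed type. This shows $j(M) = g - 2i$ is \emph{constant} across the whole set $\widetilde{\M}^\tau_{\mu,\nu;i}$, so the lower bound $\eta(M) \geq j(M) = g - 2i$ is uniform, confirming nonnegativity of the shifted exponent, while the upper bound $\eta(M) \leq j(M) + i = g - i$ is likewise uniform, confirming the degree bound of $i$. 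The only subtlety worth flagging is that $g$ as \emph{defined} in the proposition (via lengths of partitions) coincides with $2g(M)$ via Euler's formula; once this identification is made, the argument is a direct substitution into the inequalities of \Cref{cor:EtaDegree}, with no genuine obstacle remaining.
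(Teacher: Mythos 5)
Your proof is correct and follows essentially the same route as the paper: the paper's own proof is precisely the one-line observation that \cref{cor:EtaDegree} gives $0 \leq \eta(M) + 2i - g \leq i$ for every $M \in \widetilde{\M}^\tau_{\mu,\nu;i}$, so all exponents in \cref{eq:a-ki} lie between $0$ and $i$. Your additional remarks about \cref{lem:involution} and the identification $g = 2g(M)$ via \cref{lem:NrOfHandles} are accurate but not needed here (the latter is already built into the proof of \cref{cor:EtaDegree}, and the former only enters later, in \cref{lem:LaCroixPolynomials}).
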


\begin{proof}
It is a direct consequence of \cref{cor:EtaDegree}, which says that for any map $M \in \widetilde{\M}^\tau_{\mu,\nu;i}$ the following inequalities hold: 
\[ 0 \leq \eta(M) + 2i - g \leq i.\]
\end{proof}

\begin{corollary}
The quantity $\left(H_\eta\right)_{\mu, \nu}^\tau(\beta)$ is a polynomial in $\beta$ with positive integer coefficients. Moreover, it has the following form:
\begin{equation}
\label{eq:HPolynomialForm}    
\left(H_\eta\right)_{\mu, \nu}^\tau(\beta) = \sum_{0 \leq i \leq [g/2]} \left(a_\eta\right)_{\mu,\nu;i}^\tau(\beta) \beta^{g-2i},
\end{equation}
where $g := n + 2 - \left(\ell(\mu) + \ell(\nu) + \ell(\tau)\right)$.
\end{corollary}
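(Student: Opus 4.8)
The plan is to reorganize the defining sum \cref{eq:LaCroixMeasure} according to the handle decomposition \cref{eq:HandlesDecomp} and then recognize each homogeneous block as the polynomial $\left(a_\eta\right)_{\mu,\nu;i}^\tau$ introduced in \cref{prop:a-ki}. First I would use that \cref{eq:HandlesDecomp} is a \emph{disjoint} union, so that
\[
\left(H_\eta\right)_{\mu, \nu}^\tau(\beta) = \sum_{i \geq 0}\ \sum_{M \in \widetilde{\M}^\tau_{\mu,\nu;i}}\beta^{\eta(M)}.
\]
Then, directly from the definition \cref{eq:a-ki}, multiplying through by $\beta^{g-2i}$ gives the identity $\beta^{g-2i}\left(a_\eta\right)_{\mu,\nu;i}^\tau(\beta) = \sum_{M \in \widetilde{\M}^\tau_{\mu,\nu;i}}\beta^{\eta(M)}$, so the inner sum over each block is exactly $\beta^{g-2i}\left(a_\eta\right)_{\mu,\nu;i}^\tau(\beta)$.

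Next I would cut the range of the outer summation down to $0 \leq i \leq [g/2]$. On the one hand $i \geq 0$ by definition; on the other hand, the genus computation $2g(M) = n+2-\left(\ell(\mu)+\ell(\nu)+\ell(\tau)\right) = g$ carried out in the proof of \cref{cor:EtaDegree}, together with \cref{lem:NrOfHandles}, shows that any $M \in \widetilde{\M}^\tau_{\mu,\nu;i}$ satisfies $j(M) = g - 2i$ with $j(M) \geq 0$. Hence $i \leq g/2$, i.e. $\widetilde{\M}^\tau_{\mu,\nu;i} = \emptyset$ as soon as $i > [g/2]$. This truncates the outer sum to a finite one and yields precisely the claimed form \cref{eq:HPolynomialForm}.

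Finally, for the assertion on the coefficients: by \cref{prop:a-ki} (equivalently, by \cref{cor:EtaDegree}) each $\left(a_\eta\right)_{\mu,\nu;i}^\tau(\beta)$ is a sum of monomials $\beta^{\eta(M)+2i-g}$ whose exponents lie in $\{0,1,\dots,i\}$, hence a polynomial with nonnegative integer coefficients; since $g - 2i \geq 0$ throughout the truncated range, every summand $\beta^{g-2i}\left(a_\eta\right)_{\mu,\nu;i}^\tau(\beta)$ has nonnegative integer coefficients, and so does their sum. The whole argument is a bookkeeping manipulation of facts already established, so I do not expect a genuine obstacle; the only point requiring care is the emptiness of the blocks $\widetilde{\M}^\tau_{\mu,\nu;i}$ for $i > [g/2]$, as this is exactly what guarantees that \cref{eq:HPolynomialForm} is a finite sum and therefore a bona fide polynomial rather than a formal power series.
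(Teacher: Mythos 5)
Your proof is correct and takes essentially the same route as the paper: decompose the defining sum \cref{eq:LaCroixMeasure} according to \cref{eq:HandlesDecomp} and identify each block with $\beta^{g-2i}\left(a_\eta\right)_{\mu,\nu;i}^\tau(\beta)$ directly from \cref{eq:a-ki}. The paper's own proof is in fact terser, presenting the identity as immediate from the definitions, whereas you additionally justify the truncation of the sum to $i \leq [g/2]$ (emptiness of $\widetilde{\M}^\tau_{\mu,\nu;i}$ beyond that range, via \cref{cor:EtaDegree}) and the nonnegativity and integrality of the coefficients, both of which the paper leaves implicit.
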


\begin{proof}
Strictly from the definition of $\left(H_\eta\right)_{\mu, \nu}^\tau(\beta)$ given by \cref{eq:LaCroixMeasure}, one has the following formula:
\[ \left(H_\eta\right)_{\mu, \nu}^\tau(\beta) = \sum_{i \geq 0}\sum_{M \in \widetilde{\M}^\tau_{\mu,\nu;i}}\beta^{\eta(M)} = \sum_{0 \leq i \leq [g/2]} \left(a_\eta\right)_{\mu,\nu;i}^\tau(\beta) \beta^{g-2i},\]
where the last equality is simply a definition of $a_{\mu,\nu;i}^\tau(\beta)$ given by \cref{eq:a-ki}.
\end{proof}

\begin{proposition}
\label{lem:LaCroixPolynomials}
For any positive integer $i \geq 1$ one has
\[ \left(a_\eta\right)_{\mu,\nu;i}^{(n)}(-1) = 0.\]
\end{proposition}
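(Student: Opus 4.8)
The plan is to use the sign-reversing involution $\si_\eta$ constructed in \cref{lem:involution}. Recall that for the partition $\tau = (n)$ (the unicellular case), that lemma produces, for each positive integer $i$ and partitions $\mu,\nu \vdash n$, an involution $\si_\eta$ on the set $\widetilde{\M}_{\mu,\nu;i}^{(n)}$ satisfying $(-1)^{\eta(\si_\eta(M))} = (-1)^{\eta(M)+1}$. The key additional feature I would exploit is that $\si_\eta$ preserves the number of handles: inspecting the construction in the proof of \cref{lem:involution}, one sees that in every case ($e_1(M)$ a handle, twisted, or a bridge) the identity $i(M) = i(\si_\eta(M))$ is explicitly verified, so $\si_\eta$ maps $\widetilde{\M}_{\mu,\nu;i}^{(n)}$ to itself for each fixed $i$.

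First I would write out the definition of the coefficient to be evaluated. By \cref{eq:a-ki}, for $\tau = (n)$ we have
\[
\left(a_\eta\right)_{\mu,\nu;i}^{(n)}(\beta) = \sum_{M \in \widetilde{\M}^{(n)}_{\mu,\nu;i}}\beta^{\eta(M)+2i-g},
\]
where $g = n + 2 - (\ell(\mu)+\ell(\nu)+\ell(\tau))$ with $\ell(\tau)=\ell((n))=1$. Evaluating at $\beta = -1$ and pulling out the common factor $(-1)^{2i-g}$, which does not depend on $M$, I get
\[
\left(a_\eta\right)_{\mu,\nu;i}^{(n)}(-1) = (-1)^{2i-g}\sum_{M \in \widetilde{\M}^{(n)}_{\mu,\nu;i}}(-1)^{\eta(M)}.
\]
So the whole claim reduces to showing that the signed sum $\sum_{M \in \widetilde{\M}^{(n)}_{\mu,\nu;i}}(-1)^{\eta(M)}$ vanishes for every $i \geq 1$.

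Next I would apply the involution. Since $i \geq 1$, every map $M \in \widetilde{\M}^{(n)}_{\mu,\nu;i}$ has at least one handle in its root-deletion process, hence is \emph{not} unhandled, which is exactly the hypothesis under which $\si_\eta$ is defined in \cref{lem:involution}. Because $\si_\eta$ is a fixed-point-free involution on $\widetilde{\M}^{(n)}_{\mu,\nu;i}$ (fixed-point-freeness follows from the sign-flip property $(-1)^{\eta(\si_\eta(M))} = (-1)^{\eta(M)+1}$, which forbids $\si_\eta(M)=M$) that negates $(-1)^{\eta(M)}$ while preserving membership in $\widetilde{\M}^{(n)}_{\mu,\nu;i}$, the terms cancel in pairs $\{M,\si_\eta(M)\}$. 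Therefore $\sum_{M}(-1)^{\eta(M)} = 0$, and multiplying by the nonzero scalar $(-1)^{2i-g}$ gives $\left(a_\eta\right)_{\mu,\nu;i}^{(n)}(-1)=0$, as desired.

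The main obstacle is not the pairing argument itself, which is routine once the involution is in hand, but making sure that $\si_\eta$ genuinely stays within a single handle-stratum $\widetilde{\M}^{(n)}_{\mu,\nu;i}$; this is why I would take care to cite, from each branch of the construction in \cref{lem:involution}, the explicit verification that $i(M) = i(\si_\eta(M))$. A minor but worthwhile point to record is that $\si_\eta$ also preserves the type $(\mu,\nu;(n))$: the operators $\tau_{i_j}$ of \cref{def:twist} preserve black and white vertex distributions by \cref{prop:blabla}(1), and within this stratum the involution relates a unicellular map to another unicellular map, so the face distribution $(n)$ is preserved as well. With these bookkeeping facts in place, the vanishing is immediate.
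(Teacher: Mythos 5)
Your proof is correct and follows essentially the same route as the paper: both plug $\beta=-1$ into \cref{eq:a-ki} and invoke the sign-reversing involution $\si_\eta$ of \cref{lem:involution} on $\widetilde{\M}^{(n)}_{\mu,\nu;i}$ to conclude that the signed sum $\sum_{M}(-1)^{\eta(M)}$ vanishes. The only cosmetic difference is that the paper cancels by re-indexing the sum along $\si_\eta$ (which does not even require fixed-point-freeness), whereas you cancel in pairs $\{M,\si_\eta(M)\}$ after deducing fixed-point-freeness from the sign-flip property; both executions are valid.
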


\begin{proof}
Plugging $\beta = -1$ into \cref{eq:a-ki} one has
\[(-1)^{n+1 - \ell(\mu) - \ell(\nu)}\left(a_\eta\right)_{\mu,\nu;i}^{(n)}(-1) = \sum_{M \in \widetilde{\M}^{(n)}_{\mu,\nu;i}}(-1)^{\eta(M)}.\]
\cref{lem:involution} says that for each $i \geq 1$ there exists an involution $\sigma_\eta$ on the set $\widetilde{\M}^{(n)}_{\mu,\nu;i}$ which has a property that $(-1)^{\eta(\sigma_\eta(M))} = (-1)^{\eta(M)+1}$. This means that
\begin{multline*}
\sum_{M \in \widetilde{\M}^{(n)}_{\mu,\nu;i}}(-1)^{\eta(M)} =   \sum_{M \in \widetilde{\M}^{(n)}_{\mu,\nu;i}}(-1)^{\eta(\sigma_\eta(M))} = -\sum_{M \in \widetilde{\M}^{(n)}_{\mu,\nu;i}}(-1)^{\eta(M)} = 0.
\end{multline*}
Thus, $\left(a_\eta\right)_{\mu,\nu;i}^{(n)}(-1) = 0$ which finishes the proof.
\end{proof}

\begin{corollary}
The following equality holds true:
\begin{equation} 
\label{eq:Basis}
(-1)^{n+1 - \ell(\mu) - \ell(\nu)}\left(H_\eta\right)_{\mu, \nu}^{(n)}(-1) = \left(a_\eta\right)_{\mu,\nu;0}^{(n)}(-1) = \#\widetilde{\M}^{(n)}_{\mu,\nu;0}.
\end{equation}
\end{corollary}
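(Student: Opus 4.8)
The plan is to establish the corollary as a direct consequence of the polynomial decomposition in \cref{eq:HPolynomialForm} combined with the vanishing result of \cref{lem:LaCroixPolynomials}. First I would evaluate the expression from \cref{eq:HPolynomialForm} at $\beta = -1$, noting that for $\tau = (n)$ one has $\ell(\tau) = 1$, so $g = n + 2 - (\ell(\mu) + \ell(\nu) + 1) = n + 1 - \ell(\mu) - \ell(\nu)$. Plugging $\beta = -1$ into the sum gives
\[ \left(H_\eta\right)_{\mu, \nu}^{(n)}(-1) = \sum_{0 \leq i \leq [g/2]} \left(a_\eta\right)_{\mu,\nu;i}^{(n)}(-1) \, (-1)^{g-2i}. \]

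Next I would observe that the factor $(-1)^{g-2i} = (-1)^g = (-1)^{n+1-\ell(\mu)-\ell(\nu)}$ is independent of $i$, since $(-1)^{-2i} = 1$. This lets me pull the common sign outside the summation. Then, by \cref{lem:LaCroixPolynomials}, every term with $i \geq 1$ satisfies $\left(a_\eta\right)_{\mu,\nu;i}^{(n)}(-1) = 0$, so only the $i = 0$ term survives. This immediately yields the first equality of \cref{eq:Basis}, namely that $(-1)^{n+1-\ell(\mu)-\ell(\nu)} \left(H_\eta\right)_{\mu, \nu}^{(n)}(-1) = \left(a_\eta\right)_{\mu,\nu;0}^{(n)}(-1)$.

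For the second equality, I would return to the definition of $\left(a_\eta\right)_{\mu,\nu;0}^{(n)}$ in \cref{eq:a-ki}. For unhandled maps one has $i = 0$, and by \cref{cor:EtaDegree} the bounds force $0 \leq \eta(M) + 2\cdot 0 - g \leq 0$, hence $\eta(M) = g$ is constant on the set $\widetilde{\M}^{(n)}_{\mu,\nu;0}$. Therefore the exponent $\eta(M) + 2i - g = 0$ vanishes for every such map, so each summand $\beta^{\eta(M)+2i-g}$ equals $\beta^0 = 1$ regardless of $\beta$. Consequently $\left(a_\eta\right)_{\mu,\nu;0}^{(n)}(-1) = \#\widetilde{\M}^{(n)}_{\mu,\nu;0}$, the number of unhandled maps.

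I do not anticipate any serious obstacle here, as this corollary is essentially a bookkeeping assembly of previously established facts. The only point demanding mild care is tracking the parity of the global sign $(-1)^g$ and confirming that it matches the stated prefactor $(-1)^{n+1-\ell(\mu)-\ell(\nu)}$; this hinges on the specialization $\ell(\tau) = 1$, which is exactly the hypothesis $\tau = (n)$ implicit in writing $\left(H_\eta\right)_{\mu, \nu}^{(n)}$. The genuine mathematical content lives in \cref{lem:involution} and \cref{lem:LaCroixPolynomials}, which the present statement merely harvests.
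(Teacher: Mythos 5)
Your proposal is correct and follows exactly the paper's own argument: evaluate \cref{eq:HPolynomialForm} at $\beta=-1$, use \cref{lem:LaCroixPolynomials} to annihilate all terms with $i\geq 1$, and use \cref{cor:EtaDegree} to see that $\eta(M)=g$ on $\widetilde{\M}^{(n)}_{\mu,\nu;0}$, so the $i=0$ term counts the unhandled maps. Your write-up merely spells out the sign bookkeeping and the second equality (which the paper dismisses as obvious) in more detail.
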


\begin{proof}
It is enough to plug $\beta = -1$ into \cref{eq:HPolynomialForm} to obtain
\[(-1)^g\left(H_\eta\right)_{\mu, \nu}^{(n)}(-1)= \sum_{i \geq 0}\left(a_\eta\right)_{\mu,\nu;i}^{(n)}(-1) = \left(a_\eta\right)_{\mu,\nu;0}^{(n)}(-1),\]
where $g := n+1 - \ell(\mu) - \ell(\nu)$ and the last equality is a consequence of \cref{lem:LaCroixPolynomials}.
An equality 
\[ \left(a_\eta\right)_{\mu,\nu;0}^{(n)}(-1) = \#\widetilde{\M}^{(n)}_{\mu,\nu;0}\]
is obvious from \cref{cor:EtaDegree}.
\end{proof}

\section{\texorpdfstring{$b$}{b}--conjecture for unicellular maps and measure of non-orientability}
\label{sec:unicellular}
\subsection{Marginal sum}

We are going to prove that fixing white and black vertex distributions and allowing any face distribution,
the corresponding sum of coefficients in $\psi(\bm{x}, \bm{y}, \bm{z}; t, 1+\beta)$ is given by a measure of non-orientability of the appropriate maps. 
The developments in this section are similar to that of \cite[Section 3.5]{BrownJackson2007}
except that here we work in a more general setup (in \cite[Section
3.5]{BrownJackson2007} $\nu=(2^{n/2})$ with $n$ even, while here $\nu$
is an arbitrary partition) and with a slightly different function $\eta$.
We start with the following proposition.

\begin{proposition}
\label{prop:marginalsum}
For any positive integer $n$ and for any partitions $\mu,\nu \vdash n$, the following identity holds true:
\[ \sum_{\tau \vdash n} h_{\mu,\nu}^\tau(\beta) = (1+\beta)^{n+1-\ell(\mu)-\ell(\nu)}\sum_{\tau \vdash n} h_{\mu,\nu}^\tau(0).\]
\end{proposition}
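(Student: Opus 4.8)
The plan is to read the marginal sum off $\psi$ after specializing the $\zz$-alphabet, and then to pin down the $\a$-dependence of the resulting coefficients by a weight/rescaling argument. I would first set $\zz=(1,0,0,\dots)$. Since $p_k((1,0,\dots))=1$ for every $k\ge 1$, we get $p_\tau((1,0,\dots))=1$ for every partition $\tau$, so \eqref{EqDefH} gives
\[ \psi(\xx,\yy,(1,0,\dots);t,\a)=\sum_{n\ge 1}t^n\sum_{\mu,\nu\vdash n}\Big(\sum_{\tau\vdash n}h_{\mu,\nu}^\tau(\beta)\Big)p_\mu(\xx)p_\nu(\yy), \]
so the left-hand side of the proposition is exactly the coefficient of $p_\mu(\xx)p_\nu(\yy)t^n$ in this specialization (with $\beta=\a-1$).

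Next I would simplify the argument of the logarithm under the same specialization. By the one-variable evaluation \eqref{eq:EvaluationInOnePart} only the one-row partitions $\la=(n)$ survive, and combining the surviving value with the norm \eqref{eq:ScalarProd} collapses the series to
\[ \Phi(\xx,\yy;t,\a):=\sum_{n\ge 0}\frac{J_{(n)}^{(\a)}(\xx)\,J_{(n)}^{(\a)}(\yy)}{\a^n\,n!}\,t^n, \]
whence $\psi(\xx,\yy,(1,0,\dots);t,\a)=\a\,t\,\tfrac{\partial}{\partial t}\log\Phi$. Expanding both Jack factors with \eqref{eq:JackWithOnePart} yields
\[ \Phi(\xx,\yy;t,\a)=\sum_{n\ge 0}t^n\,n!\sum_{\mu,\nu\vdash n}\frac{\a^{\,n-\ell(\mu)-\ell(\nu)}}{z_\mu z_\nu}\,p_\mu(\xx)\,p_\nu(\yy), \]
which makes the key feature visible: in $\Phi$ the power of $\a$ attached to the monomial $p_\mu(\xx)p_\nu(\yy)t^n$ equals $n-\ell(\mu)-\ell(\nu)$.

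This invites the grading $w$ defined by $w(t)=1$ and $w(p_k(\xx))=w(p_k(\yy))=-1$, equivalently the substitution $\phi\colon p_k(\xx)\mapsto\a^{-1}p_k(\xx)$, $p_k(\yy)\mapsto\a^{-1}p_k(\yy)$, $t\mapsto\a t$; the previous display says precisely that $\Phi(\xx,\yy;t,\a)=\phi\big(\Phi(\xx,\yy;t,1)\big)$, i.e.\ in $\Phi$ the exponent of $\a$ on each monomial is its weight $w$. I would then check that this property survives the two operations applied to $\Phi$: as $w$ is additive under products of monomials and $\Phi$ has constant term $1$, the passage to $\log\Phi$ preserves it, and the Euler operator $t\tfrac{\partial}{\partial t}$ preserves each monomial (hence its weight). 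Consequently $\phi$ commutes with both $\log$ and $t\partial_t$, giving
\[ \psi(\xx,\yy,(1,0,\dots);t,\a)=\a\,\phi\big(\psi(\xx,\yy,(1,0,\dots);t,1)\big). \]
Since the $\a=1$ specialization is exactly the first display at $\beta=0$, its coefficient of $p_\mu(\xx)p_\nu(\yy)t^n$ is $\sum_{\tau}h_{\mu,\nu}^\tau(0)$; applying $\phi$ scales it by $\a^{\,n-\ell(\mu)-\ell(\nu)}$ and the outer factor $\a$ turns this into $\a^{\,n+1-\ell(\mu)-\ell(\nu)}$. Comparing with the first display and substituting $\a=1+\beta$ gives the identity.

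The one step needing care is the third: one must verify rigorously that ``$\a$ records the weight $w$'' is stable under the nonlinear passage from $\Phi$ to $\log\Phi$ (it suffices that this property is stable under sums and products of power series, which it is, using that $\Phi$ has constant term $1$), and that the outer operator $\a\,t\,\partial_t$ contributes exactly one extra power of $\a$ and no more. Everything else reduces to the direct substitutions provided by \eqref{eq:EvaluationInOnePart}, \eqref{eq:ScalarProd} and \eqref{eq:JackWithOnePart}.
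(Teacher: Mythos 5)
Your proposal is correct and follows essentially the same route as the paper's proof: specialize $\zz=(1,0,0,\dots)$, collapse the sum to one-row Jack polynomials via \cref{eq:EvaluationInOnePart} and \cref{eq:ScalarProd}, expand with \cref{eq:JackWithOnePart}, and extract the factor $(1+\beta)^{n+1-\ell(\mu)-\ell(\nu)}$ by comparing with the $\a=1$ series. The only difference is cosmetic: the paper pulls the powers of $\a$ out of the logarithm in a single asserted equality, whereas you justify that step explicitly via the rescaling substitution $\phi$ and its compatibility with $\log$ and $t\,\partial_t$, which is a welcome addition of rigor rather than a different argument.
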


\begin{proof}
We know that
\[ \sum_{\tau \vdash n} h_{\mu,\nu}^\tau(\beta) = [t^n p_\mu(\bm{x})p_\nu(\bm{y})]\psi(\bm{x}, \bm{y}, \bm{z}; t, \a)\bigg|_{\bm{z}=(1,0,0\dots)} \]
because of the trivial identity
\[ J^{(\a)}_\lambda(1,0,0,\dots) = J^{(\a)}_\lambda(\bm{x})\bigg|_{p_1(\bm{x}) = p_2(\bm{x}) = \cdots = 1}.\]
Using \cref{eq:EvaluationInOnePart} and replacing the scalar product by its expression given in \cref{eq:ScalarProd}, we obtain
\begin{equation} 
\sum_{\tau \vdash n} h_{\mu,\nu}^\tau(\beta) = 
[t^n p_\mu(\bm{x})p_\nu(\bm{y})]
(1+\beta)t\frac{\partial}{\partial_t}\log\left(\sum_{n \geq 0}t^n\frac{J^{(\a)}_{(n)}(\bm{x})J^{(\a)}_{(n)}(\bm{y})}{\a^n n!}\right).
\end{equation}
The formula for Jack polynomials indexed by one-part partitions given
in \cref{eq:JackWithOnePart} leads to the following equality:
\begin{multline*} 
\sum_{\tau \vdash n} h_{\mu,\nu}^\tau(\beta) = (1+\beta)[t^n p_\mu(\bm{x})p_\nu(\bm{y})] t\frac{\partial}{\partial_t}\log\left(\sum_{n \geq 0}t^n\sum_{\la^1, \la^2 \vdash n}\a^{n-\ell(\la^1)-\ell(\la^2)} \frac{n!p_{\la^1}(\bm{x})p_{\la^2}(\bm{y})}{z_{\la^1}z_{\la^2}}\right) \\ =
(1+\beta)^{n+1-\ell(\mu)-\ell(\nu)}[t^n p_\mu(\bm{x})p_\nu(\bm{y})]t\frac{\partial}{\partial_t}\log\left(\sum_{n \geq 0}t^n\sum_{\la^1, \la^2 \vdash n} \frac{n!p_{\la^1}(\bm{x})p_{\la^2}(\bm{y})}{z_{\la^1}z_{\la^2}}\right).
\end{multline*}
But the last expression is simply equal to
\begin{multline*} 
(1+\beta)^{n+1-\ell(\mu)-\ell(\nu)}[t^n p_\mu\big(\bm{x})p_\nu(\bm{y})]\ \psi(\bm{x}, \bm{y}, \bm{z}=(1,0,0\dots); t, 1\big) \\
(1+\beta)^{n+1-\ell(\mu)-\ell(\nu)}\sum_{\tau \vdash n} h_{\mu,\nu}^\tau(0),
\end{multline*}
which finishes the proof.
\end{proof}

We can prove now that the following marginal sum is given by a measure of non-orientability:

\begin{theorem}
\label{ptheo:marginalSumEta}
For any measure of non-orientability $\eta$, for any positive integer $n$, and for any partitions $\mu, \nu \vdash n$, the following identity holds true:
\begin{equation} 
\label{eq:marginalSumEta}
\sum_{\tau \vdash n}h_{\mu,\nu}^{\tau}(\beta) = \sum_{\tau \vdash n}\left(H_\eta\right)_{\mu,\nu}^{\tau}(\beta),
\end{equation}
where $\left(H_\eta\right)_{\mu,\nu}^{\tau}(\beta)$ are given by \cref{eq:LaCroixMeasure}.
\end{theorem}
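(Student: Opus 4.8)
The plan is to combine the algebraic input of \cref{prop:marginalsum} with the recursive structure of $\eta$ recorded in \cref{def:MeasureOfNonOrient}. Writing $\widetilde{\M}_{\mu,\nu}:=\bigcup_{\tau\vdash n}\widetilde{\M}_{\mu,\nu}^\tau$ for the set of all rooted bipartite maps with vertex distributions $\mu,\nu$ and arbitrary face distribution, the right-hand side of \cref{eq:marginalSumEta} is exactly $F_{\mu,\nu}(\beta):=\sum_{M\in\widetilde{\M}_{\mu,\nu}}\beta^{\eta(M)}$. On the other hand \cref{prop:marginalsum} rewrites the left-hand side as $(1+\beta)^{g}\sum_{\tau}h_{\mu,\nu}^\tau(0)$ with $g:=n+1-\ell(\mu)-\ell(\nu)$, and by \cref{theo:MapSeries} together with the fact that $\eta(M)=0$ iff $M$ is orientable we get $\sum_\tau h_{\mu,\nu}^\tau(0)=\#\bigcup_\tau \M_{\mu,\nu}^\tau=:N$, the number of rooted \emph{orientable} bipartite maps with vertex distributions $\mu,\nu$. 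Thus the theorem reduces to the purely combinatorial identity
\[ F_{\mu,\nu}(\beta) = (1+\beta)^{g}\,N, \qquad g = n+1-\ell(\mu)-\ell(\nu). \]

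Before proving this I would record the structural meaning of $g$. In the root-deletion process each bridge-move raises the number of connected components by one, so a map with $\ell(\mu)+\ell(\nu)$ vertices contains exactly $\ell(\mu)+\ell(\nu)-1$ bridges among its $n$ edges, leaving precisely $g$ non-bridge edges (borders, twists and handles). This identifies $g$ as the number of non-bridge positions in the root-deletion sequence and explains why it does not depend on $M$.

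I would then prove the identity by induction on $n$, classifying the root edge $e$ and matching the root-edge recursion for $F_{\mu,\nu}$ with that for $(1+\beta)^{g}N$. If $e$ is a bridge, then $M\setminus\{e\}=M_1\cup M_2$ with $\eta(M)=\eta(M_1)+\eta(M_2)$, $g=g_1+g_2$, and the orientable count factors accordingly, so both quantities factor multiplicatively over the two components and the inductive hypothesis closes this case. If $e$ is non-bridge, I would fix the deleted map $M^2=M\setminus\{e\}$ (which has $g-1$ non-bridge edges) together with the two corners into which the endpoints of $e$ are reattached, and note that there are exactly two ways to glue the ribbon of $e$: when the two corners lie in the same face of $M^2$ the two gluings give the $\{\text{border},\text{twisted}\}$ pair, and when they lie in distinct faces they give the handle pair of \cref{fig:TwistedMaps}. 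By \cref{def:MeasureOfNonOrient} the ``orientable'' gluing of each pair (the border, resp.\ the orientable handle) satisfies $\eta(M)=\eta(M^2)$ while the other gluing satisfies $\eta(M)=\eta(M^2)+1$; summing over the two gluings therefore contributes a factor $(1+\beta)$. Hence the non-bridge contribution to $F_{\mu,\nu}$ equals $(1+\beta)\sum_{M^2,\,\mathrm{attach}}\beta^{\eta(M^2)}$, which by the inductive hypothesis is $(1+\beta)^{g}\sum_{\mathrm{attach}}N(\mu',\nu')$; since the orientable gluings are exactly the configurations counted by the standard orientable root-edge recursion, this matches the non-bridge part of $(1+\beta)^{g}N$.

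The main obstacle is the non-bridge case. One must check, uniformly over the same-face and distinct-face situations, that the two admissible ribbon gluings of $e$ always produce maps whose $\eta$-values differ by exactly $1$ (this is precisely where the handle clause of \cref{def:MeasureOfNonOrient} is needed, and where \cref{prop:blabla} guarantees that twisting behaves well with respect to the labelling and the bridge/non-bridge classification), and that the attachment and vertex-degree bookkeeping for the orientable gluings reproduces the orientable root-edge recursion so that the ``orientable-choice'' configurations are counted by $N$. Some care is also required with the convention fixing which component is processed first after a bridge and with degenerate degree-one endpoints, but these do not affect the counting.
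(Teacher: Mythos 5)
Your overall route coincides with the paper's: reduce, via \cref{prop:marginalsum} and \cref{theo:MapSeries}, to the purely combinatorial identity $\sum_{M}\beta^{\eta(M)}=(1+\beta)^{n+1-\ell(\mu)-\ell(\nu)}N$, where $N$ is the number of rooted orientable bipartite maps with vertex distributions $\mu,\nu$, and then prove that identity by induction on $n$ through root-edge deletion, using that the two ribbon gluings of a non-bridge root edge have $\eta$-values $\eta(M^2)$ and $\eta(M^2)+1$ (border/twist pair in the same-face case, handle pair otherwise). All of these ingredients, including your observation that exactly $\ell(\mu)+\ell(\nu)-1$ bridges occur so that $g$ counts the non-bridge steps, are correct and are present in the paper's proof.

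However, there is a genuine gap: your induction does not close as stated. Your inductive hypothesis is the \emph{unrestricted} identity $F_{\mu',\nu'}(\beta)=(1+\beta)^{g'}N(\mu',\nu')$, but root-edge deletion produces sums that are restricted by the \emph{root-vertex degree}. Concretely, in the non-bridge case the black endpoint of the deleted edge must be re-attached at the root corner of $M^2$, so if the root vertex of $M$ has degree $i$, then $M^2$ ranges over maps with distributions $(\mu_{\downarrow(i)},\nu_{\downarrow(j)})$ (one part $i$ of $\mu$, resp.\ $j$ of $\nu$, lowered by one) \emph{and} root vertex of degree exactly $i-1$; different values of $i$ give different distributions, so these restricted sums cannot be reassembled into any $F_{\mu',\nu'}$. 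Hence your step ``which by the inductive hypothesis is $(1+\beta)^{g}\sum_{\mathrm{attach}}N(\mu',\nu')$'' applies the hypothesis to quantities it does not cover. The same issue arises in the bridge case, where in addition the second component is rooted at a \emph{white} vertex, so its black/white roles must be swapped. This is precisely why the paper proves the stronger, refined statement $\left(H_\eta\right)_{\mu,\nu;i}(\beta)=(1+\beta)^{n+1-\ell(\mu)-\ell(\nu)}\tilde{H}_{\mu,\nu;i}$, where both sides are restricted to maps whose root vertex has degree $i$: this family \emph{is} closed under the root-edge recursion (with the color-swapped factor $\left(H_\eta\right)_{\nu^2,\mu^2;j-1}$ handling bridges), and summing over $i$ then yields \cref{eq:marginalSumEta}. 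Your argument becomes correct once you strengthen the inductive statement in exactly this way; without that strengthening, the key step is invalid.
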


\begin{proof}
We recall that $\left(H_\eta\right)_{\mu,\nu}^{\tau}(\beta)$ is defined as a weighted sum of some rooted, bipartite maps (see \cref{eq:LaCroixMeasure}). Thus, one can define a statistic $\left(H_\eta\right)_{\mu,\nu;i}(\beta)$ as the right hand side of \eqref{eq:marginalSumEta} with a summation restricted to the maps with the root vertex of degree $i$. We are going to prove a stronger result, namely
\begin{equation} 
\label{eq:TraceRoot}
\left(H_\eta\right)_{\mu,\nu;i}(\beta) = (1+\beta)^{n+1-\ell(\mu)-\ell(\nu)}\tilde{H}_{\mu,\nu;i},
\end{equation}
where $\tilde{H}_{\mu,\nu;i}$ is the number of \emph{orientable} maps
with the root vertex of degree $i$, the black vertex distribution
$\mu$, and the white vertex distribution $\nu$. 
If \cref{eq:TraceRoot} holds true, then
\begin{multline*}
    \sum_{\tau \vdash n}\left(H_\eta\right)_{\mu,\nu}^{\tau}(\beta)
= \sum_{i \geq 1}\left(H_\eta\right)_{\mu,\nu;i}(\beta) = (1+\beta)^{n+1-\ell(\mu)-\ell(\nu)}\sum_{i \geq 1}\tilde{H}_{\mu,\nu;i} 
\\ = (1+\beta)^{n+1-\ell(\mu)-\ell(\nu)}\sum_{\tau \vdash n} h_{\mu,\nu}^\tau(0) = \sum_{\tau \vdash n}h_{\mu,\nu}^{\tau}(\beta).
\end{multline*}
The third equality uses the combinatorial interpretation of $h_{\mu,\nu}^\tau(0)$ (see \cref{theo:MapSeries})
while the last equality comes from \cref{prop:marginalsum}.
In this way we have shown that \cref{eq:TraceRoot} implies \cref{eq:marginalSumEta}. Thus, it is sufficient to prove \cref{eq:TraceRoot}.

Before we start a proof we introduce some notation. Let
$r_1,\dots,r_k$ be some positive integers such that $r_1 + \cdots +
r_k = n$. Let us fix a partition $\mu \vdash n$. We define
$\Sp^{(r_1,\dots,r_k)}(\mu)$ as the set of sequences of partitions
$(\mu^1,\dots,\mu^k)$ such that $\mu^1 \vdash r_1, \dots, \mu^k \vdash
r_k$ and such that their sum gives the fixed partition $\mu$. That is,
$\bigcup_{1 \leq i \leq k}\mu^i = \mu$. Moreover, for any positive
integer $i \geq 1$ and partition $\mu$ containing a part equal to $i$, we set
\[\mu_{\downarrow (i)} = \big( \mu \setminus (i)\big) \cup (i-1).\]

We are going to prove \cref{eq:TraceRoot} by induction on $n$.
Let $n = 1$; there exists only one partition of size $n$. That is,
$\mu = \nu = (1)$. Moreover, there is only one map with one edge, and it is planar, so clearly $H_{(1), (1);1}(\beta) = 1 = \tilde{H}_{(1), (1);1}$.
Let us fix $n \geq 2$ and assume now that the inductive assertion
holds true for all partitions of size smaller than $n$ and all
integers $i \geq 1$. Let us fix two partitions $\mu,\nu \vdash n$ and
an integer $i \geq 1$. Let $M$ be a map with the root vertex of
degree $i$, and the black (white, respectively) vertex distribution $\mu$ ($\nu$, respectively). We are going to understand the structure of $M\setminus\{e\}$, where $e$ is the root edge of $M$. There are two possibilities:
\begin{itemize} 
\item
$M\setminus\{e\}$ is a disjoint sum of two maps $M_1$ and $M_2$ (they
are ordered, i.e.~their indices matter) with root vertices of degrees
$i-1$, and $j-1$, respectively, the black vertex distributions
$\mu^1$, and $\mu^2$, respectively, and the white vertex distributions $\nu^1$, and $\nu^2$, respectively, where 
\begin{align*} 
&(\mu^1,\mu^2) \in \Sp^{(l,n-l-1)}(\mu_{\downarrow (i)}), \\
&(\nu^1,\nu^2) \in \Sp^{(l,n-l-1)}(\nu_{\downarrow (j)}),
\end{align*}
and $1 \leq j,l+1 \leq n$ are some integers;
\item
$M\setminus\{e\}$ is a single map $M'$ with the root vertex of degree
$i-1$, the black vertex distribution $\mu_{\downarrow (i)}$, and the white vertex distribution $\nu_{\downarrow (j)}$, where $2 \leq j \leq n$ is some integer.
\end{itemize}
Moreover,
\begin{itemize} 
\item
for any ordered pair of maps $M_1$, and $M_2$ with root vertices of
degrees $i-1$, and $j-1$, respectively, the black vertex distribution
$\mu^1$, and $\mu^2$, respectively, and the white vertex distribution $\nu^1$, and $\nu^2$, respectively, where 
\begin{align*} 
&(\mu^1,\mu^2) \in \Sp^{(l,n-l-1)}(\mu_{\downarrow (i)}), \\
&(\nu^1,\nu^2) \in \Sp^{(l,n-l-1)}(\nu_{\downarrow (j)}),
\end{align*}
and $1 \leq j,l+1 \leq n$ are some integers, there exists the unique
map $M$ with the root vertex of degree $i$ and the black (white, respectively) vertex distribution $\mu$ ($\nu$, respectively), such that $M^1 = M_1 \cup M_2$. In this case $\eta(M) = \eta(M_1) + \eta(M_2)$;
\item
for any map $M'$ with the root vertex of degree $i-1$, the black
vertex distribution $\mu\setminus (i) \cup (i-1)$ and the white vertex distribution $\nu\setminus (j) \cup (j-1)$, where $2 \leq j \leq n$ is some integer, there exists 
\[ 2(j-1)m_{j-1}(\nu)+2(j-1)\] 
maps with the root vertex of degree $i$ and the black (white,
respectively) vertex distribution $\mu$ ($\nu$, respectively), such
that removing its root edge gives a map $M'$. Indeed, each map with
$n$ edges and these properties is obtained by adding an edge to $M'$,
which connects the root corner $r$ of $M'$ to some corner $c$ of $M'$
incident to a white vertex of degree $j-1$. There are $(j-1)m_{j-1}(\nu) + (j-1)$ such corners (since there are $m_{j-1}(\nu) + 1$ white vertices of degree $j-1$ in the map $M'$) and for each chosen corner there are exactly two ways to connect it with the root corner of $M'$ by an edge (these two ways correspond to construction of maps $M$ and $\tau_1 M$ -- we recall that $\tau_1 M$ is a map obtained from $M$ by twisting its root edge; see \cref{def:twist}). Now, notice that there are following possibilities:
\begin{itemize}
\item
if the root corner $r$ of $M'$ and the corner $c$ belong to the same
face of $M'$ then exactly one rooted bipartite map from $\{M,\tau_1
M\}$ has the twisted root edge, while the second one has the root edge which is a border. Thus
\[\{\eta(M), \eta(\tau_1 M)\} = \{\eta(M'), \eta(M')+1\};\]
\item
if the root corner $r$ of $M'$ and the corner $c$ belong to different faces of $M'$ then both root edges of $M$ and $\tau_1 M$ are handles. Thus, strictly from the definition of $\eta$, one has \[\{\eta(M), \eta(\tau_1 M)\} = \{\eta(M'), \eta(M')+1\}.\]
\end{itemize}
\end{itemize}

Above analysis leads us to the following recursion obtained by removing root edges from the maps appearing in the summation index in the definition of $\left(H_\eta\right)_{\mu,\nu;i}(\beta)$ given by \cref{eq:LaCroixMeasure}:
\begin{multline*}
\left(H_\eta\right)_{\mu,\nu;i}(\beta) = \sum_{1 \leq j,l \leq n}
\sum_{\substack{(\mu^1,\mu^2) \in \Sp^{(l-1,n-l)}(\mu_{\downarrow (i)}), \\
(\nu^1,\nu^2) \in \Sp^{(l-1,n-l)}(\nu_{\downarrow (j)})}}
\left(H_\eta\right)_{\mu^1,\nu^1;i-1}(\beta)\left(H_\eta\right)_{\nu^2,\mu^2;j-1}(\beta)\\
+ (1+\beta)\sum_{2 \leq j \leq n}(j-1)(m_{j-1}(\nu) + 1)\left(H_\eta\right)_{\mu_{\downarrow (i)}, \nu_{\downarrow (j)};i-1}(\beta).
\end{multline*}
Using the inductive assertion, we obtain:
\begin{multline*}
\left(H_\eta\right)_{\mu,\nu;i}(\beta) = (1+\beta)^{n+1-\ell(\mu)-\ell(\nu)} \left( \sum_{1 \leq j,l \leq n}
\sum_{\substack{(\mu^1,\mu^2) \in \Sp^{(l-1,n-l)}(\mu_{\downarrow (i)}), \\
(\nu^1,\nu^2) \in \Sp^{(l-1,n-l)}(\nu_{\downarrow (j)})}}\tilde{H}_{\mu^1,\nu^1;i-1}\tilde{H}_{\nu^2,\mu^2;j-1}\right.\\
\left. + \sum_{2 \leq j \leq n}(j-1)(m_{j-1}(\nu) + 1)\tilde{H}_{\mu_{\downarrow (i)}, \nu_{\downarrow (j)};i-1}\right).
\end{multline*}
To finish the proof, it is enough to notice that the following recursion holds true:
\begin{multline*}
\tilde{H}_{\mu,\nu;i} = \left( \sum_{1 \leq j,l \leq n}
\sum_{\substack{(\mu^1,\mu^2) \in \Sp^{(l-1,n-l)}(\mu_{\downarrow (i)}), \\
(\nu^1,\nu^2) \in \Sp^{(l-1,n-l)}(\nu_{\downarrow (j)})}} \tilde{H}_{\mu^1,\nu^1;i-1}\tilde{H}_{\nu^2,\mu^2;j-1}\right.\\
\left.
+ \sum_{2 \leq j \leq n}(j-1)(m_{j-1}(\nu) + 1)\tilde{H}_{\mu_{\downarrow (i)}, \nu_{\downarrow (j)};i-1}\right).
\end{multline*}
Above relation comes from the analysis of the process of removing the
root edge from an orientable map with the root vertex of degree $i$,
the black vertex distribution $\mu$, and the white vertex distribution $\nu$. Such analysis is almost identical to the analysis we did in the general case, and we leave it as an easy exercise.
\end{proof}

\subsection{Some consequences of the polynomiality and the marginal sum results}

We start with an observation that polynomials $h_{\mu, \nu}^\tau(\beta)$ have some specific form:

\begin{lemma}
\label{lem:expansion}
For any positive integer $n$ and any partitions $\mu,\nu, \tau \vdash n$ one has the following expansion:
\begin{align*} 
h_{\mu, \nu}^\tau(\beta) = \begin{cases}\sum_{0 \leq i \leq [g/2]} a_{\mu,\nu;i}^\tau \beta^{g-2i}(\beta+1)^i &\text{ for } \ell(\mu)+\ell(\nu)+\ell(\tau) \leq 2 + n, \\
0 &\text{ otherwise }; \end{cases}
\end{align*}
where $g := 2 + n - \left(\ell(\mu)+\ell(\nu)+\ell(\tau)\right)$ and $a_{\mu,\nu;i}^\tau \in \QQ$.
\end{lemma}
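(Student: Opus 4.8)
The plan is to deduce the statement entirely from \cref{theo:Polynomiality} together with a single functional equation for $h_{\mu,\nu}^\tau$ coming from the $\alpha\leftrightarrow 1/\alpha$ duality of Jack polynomials. First, by \cref{theo:Polynomiality} the quantity $h_{\mu,\nu}^\tau(\beta)$ is a polynomial of degree $g:=2+n-(\ell(\mu)+\ell(\nu)+\ell(\tau))$; since a nonzero polynomial has nonnegative degree, this already forces $h_{\mu,\nu}^\tau\equiv 0$ whenever $g<0$, that is whenever $\ell(\mu)+\ell(\nu)+\ell(\tau)>2+n$, which is the second branch of the asserted formula. It remains to treat $g\ge 0$ and to show that the degree-$g$ polynomial $h_{\mu,\nu}^\tau$ lies in the subspace $V_g:=\Span\{\beta^{g-2i}(\beta+1)^i : 0\le i\le [g/2]\}$ of $\QQ[\beta]_{\le g}$.

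The key reduction is a clean description of $V_g$. Consider the map $\Phi$ of $\QQ[\beta]_{\le g}$ defined by $(\Phi P)(\beta)=(1+\beta)^g P\bigl(-\beta/(1+\beta)\bigr)$; a short computation shows $\Phi^2=\id$, and $\Phi$ is the action on degree-$g$ binary forms of the order-two matrix $\left(\begin{smallmatrix}-1&0\\1&1\end{smallmatrix}\right)\in\Gl_2$, whose eigenvalues are $1$ and $-1$. Hence on $\Symm^g$ the operator $\Phi$ has eigenvalues $(-1)^k$ for $k=0,\dots,g$, so its $(-1)^g$-eigenspace has dimension $[g/2]+1$. On the other hand a direct calculation gives, for $\beta':=-\beta/(1+\beta)$, the identity $(1+\beta)^g\,(\beta')^{g-2i}(1+\beta')^i=(-1)^g\,\beta^{g-2i}(\beta+1)^i$, so each generator of $V_g$ is a $(-1)^g$-eigenvector of $\Phi$. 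As the $[g/2]+1$ generators are linearly independent, $V_g$ is exactly this eigenspace. Consequently a polynomial $P$ of degree at most $g$ belongs to $V_g$ if and only if $\Phi P=(-1)^g P$.

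Thus the whole statement reduces to proving the functional equation $(1+\beta)^g h_{\mu,\nu}^\tau\bigl(-\beta/(1+\beta)\bigr)=(-1)^g h_{\mu,\nu}^\tau(\beta)$, which is a symmetry of $h_{\mu,\nu}^\tau(\alpha-1)$ under $\alpha\mapsto 1/\alpha$ (note that $1/\alpha-1=-\beta/(1+\beta)$ for $\beta=\alpha-1$). I would extract this from the definition \cref{EqDefH} by applying, in each of the three alphabets $\bm x,\bm y,\bm z$, the standard involution $\omega_\alpha\colon p_k\mapsto(-1)^{k-1}\alpha\,p_k$, which sends $J_\lambda^{(\alpha)}$ to a scalar multiple of $J_{\lambda'}^{(1/\alpha)}$ and transforms the $\alpha$-deformed Hall product accordingly (see \cite[Section~VI.10]{Macdonald1995}). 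Reindexing the sum over $\lambda\in\Y$ by the conjugate partition $\lambda\mapsto\lambda'$ leaves the inner series invariant, while $\omega_\alpha$ multiplies the coefficient of $p_\mu(\bm x)p_\nu(\bm y)p_\tau(\bm z)$ by $(-1)^{3n-\ell(\mu)-\ell(\nu)-\ell(\tau)}\alpha^{\ell(\mu)+\ell(\nu)+\ell(\tau)}$; collecting this with the normalising scalar products \cref{eq:ScalarProd}, the prefactor $\alpha\,t\,\partial_t$, and the overall power of $t$ is expected to produce precisely the factor $(-1)^g(1+\beta)^{-g}$ relating $h_{\mu,\nu}^\tau(\alpha-1)$ to $h_{\mu,\nu}^\tau(1/\alpha-1)$.

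The conceptual content lies in the elementary linear-algebra step; the main obstacle is the bookkeeping of the last paragraph, namely verifying that after applying $\omega_\alpha$ simultaneously in all three alphabets and performing the $\lambda\mapsto\lambda'$ reindexing, the accumulated powers of $\alpha$ (from the three families of power sums, from the $\alpha$-deformed scalar products, and from the logarithmic-derivative prefactor) together with all signs combine into exactly $(-1)^g\alpha^{g}$, and not some other exponent. Matching this exponent with $g=2+n-\ell(\mu)-\ell(\nu)-\ell(\tau)$ is what is consistent with the degree bound of \cref{theo:Polynomiality} and what makes the functional equation cut out $V_g$. A useful sanity check is that at $\beta=0$ the equation degenerates to $h_{\mu,\nu}^\tau(0)=(-1)^g h_{\mu,\nu}^\tau(0)$, forcing the orientable count $h_{\mu,\nu}^\tau(0)$ to vanish when $g$ is odd, exactly as it must since a half-integral genus admits no orientable map.
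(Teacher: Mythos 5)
Your strategy is sound and, in substance, it coincides with the proof the paper actually invokes: the paper's proof of \cref{lem:expansion} consists of two citations --- La Croix \cite[Corollary 5.22]{LaCroix2009} proved exactly this expansion \emph{conditionally} on polynomiality, and \cref{theo:Polynomiality} supplies the polynomiality --- and La Croix's conditional argument rests on precisely the ingredients you describe, namely the $\alpha\mapsto 1/\alpha$ duality of Jack polynomials together with the observation that this duality, plus a degree bound, forces membership in $\Span\{\beta^{g-2i}(\beta+1)^i\}$. Your linear-algebra half is correct and complete: $\Phi$ is an involution on $\QQ[\beta]_{\le g}$, each $\beta^{g-2i}(\beta+1)^i$ is a $(-1)^g$-eigenvector, the generators are independent (distinct orders of vanishing at $\beta=0$), and the eigenvalue count for $\left(\begin{smallmatrix}-1&0\\1&1\end{smallmatrix}\right)$ acting on degree-$g$ forms gives dimension $[g/2]+1$, so the containment is an equality. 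The vanishing branch for $\ell(\mu)+\ell(\nu)+\ell(\tau)>2+n$ also follows correctly from \cref{theo:Polynomiality}.

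The one genuine gap is that the functional equation $(1+\beta)^g\,h_{\mu,\nu}^\tau\bigl(-\beta/(1+\beta)\bigr)=(-1)^g\,h_{\mu,\nu}^\tau(\beta)$, which carries all of the arithmetic content, is never established: you explicitly defer the sign-and-exponent bookkeeping, and if either came out differently the argument would collapse. For the record, the computation does close, with exactly your sign and exponent. From $\omega_\alpha J_\lambda^{(\alpha)}=\alpha^{|\lambda|}J_{\lambda'}^{(1/\alpha)}$ and $\langle J_{\lambda'},J_{\lambda'}\rangle_{1/\alpha}=\alpha^{-2|\lambda|}\langle J_\lambda,J_\lambda\rangle_{\alpha}$, applying $\omega_\alpha$ in the three alphabets and reindexing $\lambda\mapsto\lambda'$ shows that the sum inside the logarithm in \cref{EqDefH} is sent to itself with $(t,\alpha)$ replaced by $(\alpha t,1/\alpha)$; since $t\partial_t$ commutes with the rescaling $t\mapsto\alpha t$, this yields $\omega_\alpha^{\otimes 3}\psi(\xx,\yy,\zz;t,\alpha)=\alpha^{2}\,\psi(\xx,\yy,\zz;\alpha t,1/\alpha)$. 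Extracting the coefficient of $t^n p_\mu(\xx)p_\nu(\yy)p_\tau(\zz)$ and using $\omega_\alpha p_\mu=(-1)^{|\mu|-\ell(\mu)}\alpha^{\ell(\mu)}p_\mu$ gives $h_{\mu,\nu}^\tau(1/\alpha-1)=(-1)^{3n-L}\alpha^{L-n-2}\,h_{\mu,\nu}^\tau(\alpha-1)$ with $L:=\ell(\mu)+\ell(\nu)+\ell(\tau)$, and since $(-1)^{3n-L}=(-1)^{n+L}=(-1)^g$ and $L-n-2=-g$, this is your equation after substituting $\beta=\alpha-1$. So the proposal is salvageable exactly as planned, but as written it proves the easy half (the eigenspace characterization) and leaves the hard half as an expectation; alternatively that half can simply be cited, since it is what La Croix establishes on the way to \cite[Corollary 5.22]{LaCroix2009}, which is the route the paper takes.
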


\begin{proof}
La Croix proved in \cite[Corollary 5.22]{LaCroix2009} that \cref{lem:expansion} holds true, assuming polynomiality of $h_{\mu, \nu}^\tau(\beta)$ (that he was not able to prove). \cref{theo:Polynomiality} completes the proof.
\end{proof}

We are now ready to prove \cref{theo:UnhandledOnefaceMaps}

\begin{proof}[Proof of \cref{theo:UnhandledOnefaceMaps}]
\cref{theo:MapSeries} already says that the cases $\beta = 0,1$
correspond to counting maps on orientable, and all (orientable or non-orientable),
respectively, surfaces. Thus, we need to establish the remaining identity:
\[ h_{\mu,\nu}^{(n)}(-1) = \left(H_\eta\right)_{\mu,\nu}^{(n)}(-1).\]
Note now that for fixed partitions $\mu,\nu \vdash n$ the variable $g := 2 + n - \left(\ell(\mu) + \ell(\nu) + \ell(\tau)\right)$ taken over all partitions $\tau \vdash n$ realizes a maximum for $\tau = (n)$. Hence
\begin{multline*} [\beta^{1 + n - \left(\ell(\mu) + \ell(\nu)\right)}]\sum_{\tau \vdash n}h_{\mu,\nu}^{\tau}(\beta) = [\beta^{1 + n - \left(\ell(\mu) + \ell(\nu)\right)}]h_{\mu,\nu}^{(n)}(\beta) = (-1)^{1 + n - \left(\ell(\mu) + \ell(\nu)\right)}h_{\mu,\nu}^{(n)}(-1)
\end{multline*}
by \cref{lem:expansion} and, similarly,
\begin{multline*} [\beta^{1 + n - \left(\ell(\mu) + \ell(\nu)\right)}]\sum_{\tau \vdash n}\left(H_\eta\right)_{\mu,\nu}^{\tau}(\beta) = [\beta^{1 + n - \left(\ell(\mu) + \ell(\nu)\right)}]\left(H_\eta\right)_{\mu,\nu}^{(n)}(\beta) \\
= (-1)^{1 + n - \left(\ell(\mu) + \ell(\nu)\right)}\left(H_\eta\right)_{\mu,\nu}^{(n)}(-1)
\end{multline*}
by \cref{eq:HPolynomialForm} and \cref{lem:LaCroixPolynomials}.
By \cref{ptheo:marginalSumEta} the following equality holds
\[ [\beta^{1 + n - \left(\ell(\mu) + \ell(\nu)\right)}]\sum_{\tau \vdash n}h_{\mu,\nu}^{\tau}(\beta) = [\beta^{1 + n - \left(\ell(\mu) + \ell(\nu)\right)}]\sum_{\tau \vdash n}\left(H_\eta\right)_{\mu,\nu}^{\tau}(\beta)\]
which implies the desired result.
\end{proof}

\begin{remark}
The equality
\[ [\beta^{1 + n - \left(\ell(\mu) + \ell(\nu)\right)}]\sum_{\tau \vdash n}h_{\mu,\nu}^{\tau}(\beta) = [\beta^{1 + n - \left(\ell(\mu) + \ell(\nu)\right)}]\sum_{\tau \vdash n}\left(H_\eta\right)_{\mu,\nu}^{\tau}(\beta),\]
combined with \cref{prop:marginalsum}, \cref{lem:LaCroixPolynomials}
and  \cref{lem:expansion} says that the top degree coefficient of
$h_{\mu,\nu}^{(n)}(\beta)$ is enumerated by \emph{unhandled} maps of
type $(\mu,\nu;(n))$, but it is also enumerated by \emph{orientable}
maps with the black (white, respectively) vertex distribution $\mu$
($\nu$, respectively) and the arbitrary face degree. In fact, one can use the proof of \cref{prop:marginalsum} to construct a bijection between these two sets recursively.
An interesting result of \'Sniady \cite[Corollary 0.5]{Sniady2015} states that the top-degree part of Jack characters indexed by a one-part partition can be also expressed as a linear combination of certain functions indexed by orientable maps. \'Sniady informed us in private communication \cite{SniadyPrivate} that he can construct a similar bijection, but for different ``measure of non-orientability'', which inspired us to initiate a research presented in this section. The connection between striking similarities in both results seems to be far from being understood.
\end{remark}

\section{Low genera cases and orientations}
\label{sec:lowGenera}

In this section we are going to prove \cref{theo:LowGenera}. In fact, we are going to show that there is an infinite family of measures of non-orientability for which, in low genera cases, $b$-conjecture holds true.

\subsection{A measure of non-orientability given by orientations} 

Let $M$ be a map. We say that $O$ is an \emph{orientation} of $M$ if it defines an orientation of each face of $M$ such that
\begin{itemize}
\item
the orientation of the root face given by $O$ is consistent with the orientation given by the root;
\item 
if $M$ is orientable then $O$ is the canonical orientation of $M$, that is the orientation for which each face of $M$ is oriented clockwise (counterclockwise, respectively) iff the root face is oriented clockwise (counterclockwise, respectively).
\end{itemize}

Let $\mathcal{O}$ be a \emph{set of orientations of all rooted maps}
(\emph{set of orientations}, for short), i.e.~ for any map $M$ there exists the \emph{unique} orientation $O$ of $M$ such that $O \in \mathcal{O}$. We are going to define a function $\eta_{\OOO}$ associated with $\OOO$ that takes as values maps and returns a nonnegative integer that, in some sense, ``measures non-orientability'' of the given map. This function will be defined recursively using the same procedure of deleting the root edge from the given map, as in \cref{def:MeasureOfNonOrient}.

\begin{definition}
\label{def:MeasureForOrientation}
Let $\mathcal{O}$ be a set of orientations.
We set $\eta_\OOO(M) = 0$ for $M$ without edges, we fix a positive
integer $n$, and we assume that
$\eta_\OOO(M)$ is already defined for all maps with at most $n-1$
edges. Let $M$ be a map with $n$ edges and
let $O \in \OOO$ be the orientation associated with $M^2$
(in the case where $M^2$ is a disjoint sum of two maps
$M_1, M_2$, we are taking two associated orientations $O_1, O_2 \in
\OOO$, respectively). Let $c'$ be the unique corner of $M^2$ containing the first corner $c$ of $M$ visited  after the root
corner of $M$ and $c'$ inherits an orientation from the corner
$c$. We set $\eta_\OOO(M) := \eta_\OOO(M^2)$ if the
orientation of $c'$ is consistent with the orientation given by $O$
and we say that \emph{$e$ is of the first kind}; otherwise we set
$\eta_\OOO(M) := \eta_\OOO(M^2)+1$ and we say that
\emph{$e$ is of the second kind} (in the case where $M^2
= M_1 \cup M_2$ is a disjoint sum of two rooted maps we set, by
convention, $\eta_\OOO(M) := \eta_\OOO(M_1) +
\eta_\OOO(M_2)$ and we set $e$ to be of the first kind).
\end{definition}

It is easy to see that for each set of orientations $\OOO$, the following holds true:
\begin{itemize}
\item
if the root edge $e$ of $M$ is a \emph{bridge} then $\eta_\OOO(M) = \eta_\OOO(M \setminus \{e\})$,
\item
if the root edge $e$ of $M$ is a \emph{border} then $\eta_\OOO(M) = \eta_\OOO(M \setminus \{e\})$,
\item
if the root edge $e$ of $M$ is a \emph{twisted} edge then $\eta_\OOO(M) = \eta_\OOO(M \setminus \{e\})+1$,
\item
if the root edge $e$ of $M$ is a \emph{handle} then $\{\eta_\OOO(M), \eta_\OOO(
\tau_1 M)\} = \{\eta_\OOO(M \setminus \{e\}), \eta_\OOO(M \setminus \{e\}) + 1\}$.
\end{itemize}

In other words, for any orientation $\OOO$ of all rooted maps, the
measure of non-orientability $\eta_\OOO$ associated with $\OOO$ is
also a measure of non-orientability given by
\cref{def:MeasureOfNonOrient}. However, the converse statement is not
true, i.e. there are many measures of non-orientability $\eta$ which
are not given by any set of orientations of all rooted maps.

\begin{remark}
Note that we use \textbf{two different} ways to make a distinction
between edges: the
first way is by determining their \emph{type} which can be a bridge, a
border, a twisted edge, or a handle. The second way is by determining
their \emph{kind} which can be the first or the second. Each
\emph{type} of edges has uniquely determined \emph{kind} except a handle which
can be both of the first and of the second kind. With this notation,
invariant $\eta_\OOO(M)$ associated with the rooted map $M$ is equal to the number of edges of the second
kind appearing in its root-deletion process.
\end{remark}

We are ready to restate \cref{theo:LowGenera} in the more general form:

\begin{theorem}
\label{theo:UnicellularLowGenera}
For a set $\OOO$ of orientations, for any positive integer $n$, and for any partitions $\mu,\nu \vdash n$ such that $\ell(\mu) + \ell(\nu) \geq n - 3$
the following equality holds true:
\[h_{\mu,\nu}^{(n)}(\beta) = \left(H_{\eta_\OOO}\right)_{\mu,\nu}^{(n)}(\beta).\]
\end{theorem}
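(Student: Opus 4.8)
The goal is to prove Theorem~\ref{theo:UnicellularLowGenera}: for a set $\OOO$ of orientations, and for partitions $\mu,\nu \vdash n$ with $\ell(\mu)+\ell(\nu) \geq n-3$, the equality $h_{\mu,\nu}^{(n)}(\beta) = \left(H_{\eta_\OOO}\right)_{\mu,\nu}^{(n)}(\beta)$ holds, where $\eta_\OOO$ is the particular measure of non-orientability coming from $\OOO$.

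**The overall plan.**

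The plan is to compare the two polynomials coefficient by coefficient, exploiting that both sides, when $\tau=(n)$, have the same degree $g := 1+n-\ell(\mu)-\ell(\nu)$, and that the constraint $\ell(\mu)+\ell(\nu) \geq n-3$ forces $g \leq 4$. This low degree is the whole point: there are only a handful of coefficients to match. By \cref{lem:expansion}, $h_{\mu,\nu}^{(n)}(\beta) = \sum_{0\le i \le [g/2]} a_{\mu,\nu;i}^{(n)} \beta^{g-2i}(\beta+1)^i$, and by \cref{eq:HPolynomialForm} together with \cref{prop:a-ki}, $\left(H_{\eta_\OOO}\right)_{\mu,\nu}^{(n)}(\beta) = \sum_{0\le i\le[g/2]} \left(a_{\eta_\OOO}\right)_{\mu,\nu;i}^{(n)}(\beta)\,\beta^{g-2i}$ where $\left(a_{\eta_\OOO}\right)_{\mu,\nu;i}^{(n)}$ has degree at most $i$. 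So the first step is to reduce the claim to the equality of these two ``block'' decompositions. With $g\le 4$ the index $i$ runs only over $i\in\{0,1,2\}$.

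**Matching the coefficients via the constraints already available.**

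First I would record the two constraints that pin down several coefficients for free. From \cref{theo:UnhandledOnefaceMaps} (the three evaluations at $\beta\in\{-1,0,1\}$) I get that the two polynomials agree at $\beta=0,\pm 1$; in particular, agreement at $\beta=-1$ forces, by \cref{lem:LaCroixPolynomials} and \cref{lem:expansion}, the top $i=0$ block to match: $a_{\mu,\nu;0}^{(n)} = \left(a_{\eta_\OOO}\right)_{\mu,\nu;0}^{(n)} = \#\widetilde{\M}^{(n)}_{\mu,\nu;0}$. Second, \cref{ptheo:marginalSumEta} (the marginal-sum identity) together with \cref{prop:marginalsum} gives the full factorization $(1+\beta)^{g}$ after summing over all $\tau$, and since $\tau=(n)$ is the unique $\tau$ realizing the maximal $g$, this transfers to a strong constraint on $h_{\mu,\nu}^{(n)}$ and $\left(H_{\eta_\OOO}\right)_{\mu,\nu}^{(n)}$ individually in top degree. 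The strategy is then purely algebraic: for $g\le 2$ these two facts already determine every coefficient (the blocks $i=0$ and $i=1$ are forced, and $\left(a_{\eta_\OOO}\right)_{\mu,\nu;1}^{(n)}$ is a degree-$\le 1$ polynomial whose two values are fixed). For $g=3,4$ one extra block $i=1$ (degree $\le 1$) or blocks $i=1,2$ (degrees $\le 1,\le 2$) survive, and I would extract the missing coefficients by taking further coefficient extractions $[\beta^k]$ of the marginal-sum identity, or equivalently by evaluating derivatives at the known points, until the number of linear constraints matches the number of unknown coefficients.

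**The main obstacle and how I would handle it.**

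The hard part will be the genus-$2$ case $g=4$, where the block $\left(a_{\eta_\OOO}\right)_{\mu,\nu;2}^{(n)}(\beta)$ is a genuine degree-$\le 2$ polynomial and is \emph{not} pinned down by the three evaluations plus the single $(1+\beta)$-factorization alone: one needs a genuinely new input about maps with exactly two handles. I expect this is precisely where the specific structure of $\eta_\OOO$ (that it comes from a global set of orientations, not just an abstract measure of non-orientability) must be used — the abstract axioms of \cref{def:MeasureOfNonOrient} leave the $i=2$ block ambiguous, whereas a coherent orientation controls how the two handles interact. Concretely, I would introduce a finer involution or a sign-reversing pairing on $\widetilde{\M}^{(n)}_{\mu,\nu;2}$ refining \cref{lem:involution}, using the edge \emph{kinds} (first/second) from \cref{def:MeasureForOrientation} rather than merely the edge \emph{types}, to show that the surviving middle coefficient of $\left(a_{\eta_\OOO}\right)_{\mu,\nu;2}^{(n)}$ matches $a_{\mu,\nu;2}^{(n)}$. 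The plausibility check is a dimension count: under $\ell(\mu)+\ell(\nu)\ge n-3$ there are at most three free coefficients on each side beyond those already fixed, and the evaluations at $\beta=-1,0,1$ give three independent linear conditions, so the system should be exactly determined once the orientation-based refinement supplies the remaining structural identity. If that structural identity proves stubborn, the fallback is to verify the $g=4$ case by a direct, finite case analysis of the possible $\widetilde{\M}^{(n)}_{\mu,\nu;i}$ contributions, which is feasible precisely because the bounded genus keeps the configurations finite.
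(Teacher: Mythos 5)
Your skeleton is the same as the paper's: decompose both polynomials into blocks $\beta^{g-2i}(\beta+1)^i$ via \cref{lem:expansion}, \cref{prop:a-ki} and \cref{eq:HPolynomialForm}, note that $\ell(\mu)+\ell(\nu)\geq n-3$ forces $g\leq 4$ so only $i\in\{0,1,2\}$ occur, use the three evaluations at $\beta\in\{-1,0,1\}$ from \cref{theo:UnhandledOnefaceMaps}, and observe that the only block not already forced into the shape $\widetilde{a}_i(1+\beta)^i$ by degree and by vanishing at $\beta=-1$ (\cref{lem:LaCroixPolynomials}) is the $i=2$ block. You also correctly diagnose that closing this last case requires a new involution on $\widetilde{\M}^{(n)}_{\mu,\nu;2}$ that uses the orientation structure of $\eta_\OOO$, not just the axioms of \cref{def:MeasureOfNonOrient}. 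This is exactly the paper's \cref{lem:2handles}: an involution $\sigma_\OOO$ with $\eta_\OOO(\sigma_\OOO(M)) = 2-\eta_\OOO(M)$, which makes $\left(a_{\eta_\OOO}\right)^{(n)}_{\mu,\nu;2}$ palindromic, hence equal to $\widetilde{a}(1+\beta)^2$ after combining with vanishing at $-1$; the final step is then the non-degenerate $3\times 3$ linear system you describe.

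The genuine gap is that you postulate this involution rather than construct it, and its construction is the entire technical content of the paper's Section 5. The naive pairing --- twist one or both of the two handle edges $e_i(M), e_j(M)$, as in \cref{lem:involution} --- provably fails in general: the paper exhibits an explicit map $M\in\widetilde{\M}^{(5)}_{(5),(5);2}$ where every map $\tau_1^{\epsilon_1}\tau_2^{\epsilon_2}\tau_3 M$ has the wrong edge types (a border becomes twisted, a handle stops being a handle), because twisting the later handle changes the corner sets, $C(M^k)\neq C((\tau_j M)^k)$. Handling this case requires the cut-and-reglue operation of \cref{lem:order}: erase the edges $e_k(M)$ and $e_j(M)$, keep their four half-edges, and re-pair them the unique other way so that the corner set of the root face is preserved, together with \cref{lem:pomoc} to show this bad case occurs only when the intermediate root edge is a border. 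None of this is hinted at in your proposal, and it is not a routine verification. Moreover, both of your fallback ideas would fail: extracting sub-top coefficients $[\beta^k]$ of the marginal-sum identity of \cref{ptheo:marginalSumEta} gives no constraint on $h_{\mu,\nu}^{(n)}$ alone, since for $k$ below the top degree the partitions $\tau$ with $\ell(\tau)\geq 2$ also contribute to that coefficient; and a ``direct, finite case analysis'' of the $g=4$ case is impossible, because bounding the genus does not bound $n$, so the family $\widetilde{\M}^{(n)}_{\mu,\nu;2}$ over all admissible $(n,\mu,\nu)$ is infinite.
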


The next subsections are devoted to its proof.

\subsection{Unicellular maps of low genera with two handles}

In this section we are going to analyze the structure of the
unicellular maps of low genera with two handles, which is necessary
for the proof of \cref{theo:UnicellularLowGenera}. Since this section is very technical, we would like to precede it by
a description of the main idea of the proof of
\cref{theo:UnicellularLowGenera}. We should try to keep it light and a
bit informal to motivate the reader to understand all the
technicalities that will appear after this introduction and that are
necessary to present the formal proof of
\cref{theo:UnicellularLowGenera}.

\subsubsection{General idea}
Let $\OOO$ be a set of orientations and let us fix a
positive integer $n$ and partitions $\mu, \nu \vdash n$ such that
$\ell(\mu) + \ell(\nu) = n-3$. The most important construction in this
section gives an involution
\[ \sigma_\OOO : \widetilde{\M}^{(n)}_{\mu,\nu;2} \to \widetilde{\M}^{(n)}_{\mu,\nu;2}\]
such that
\[ \eta_\OOO\left(\sigma_\OOO(M)\right) = 2 - \eta_\OOO(M). \]
It is an easy exercise (later on it will be explained in details) that
having the above mentioned involution we can use a simple polynomial
interpolation argument
to prove \cref{theo:UnicellularLowGenera}, thus in the following we
are going to focus on the construction of such involution. 

Firstly, we need to
understand how a map $M \in \widetilde{\M}^{(n)}_{\mu,\nu;2}$ can look
like. This map has a unique face, genus $2$, and exactly $2$ handles
appear during its root-deletion process that correspond to edges
$e_i(M)$ and $e_j(M)$ with labels $j > i$. Moreover,
\cref{lem:NrOfHandles} asserts that no twisted edges appear during the
root-deletion process of $M$, thus $\eta_\OOO(M) \in \{0,1,2\}$ and it
depends only on the fact wether the root edges of $M^i$ and $M^j$ are
handles of the first kind or of the second kind. The most natural
idea of how to construct the
map $\sigma_\OOO(M)$ is to reverse somehow the root-deletion process
of $M$ in a way that the edges of $M$ and $\sigma_\OOO(M)$ with
the same labels have the same types and such that the root edges of
$\sigma_\OOO(M)^i$ and $M^i$ are
    handles of different kinds. Similarly the root edges of
$\sigma_\OOO(M)^j$ and $M^j$ are
    handles of different kinds. This suggests that the map
    $\sigma_\OOO(M)$ might be constructed by twisting some of the
    edges of $M$ in the appropriate way, similar to how it was done in
    \cref{lem:involution}. However, it turned out that in some cases
    it is impossible and one needs a deeper analysis of
    the structure of the map $M$. In the following section, we present
    an example where twisting some edges never works, and we
    show on this specific example how to
    overcome this problem. We believe that this example is the accurate
    toy-example of the
    general case.

\subsubsection{Example}
\label{subsec:example}

\begin{figure}
\centering
\subfloat[]{
	\label{subfig:m1}
	\includegraphics[width=0.32\linewidth]{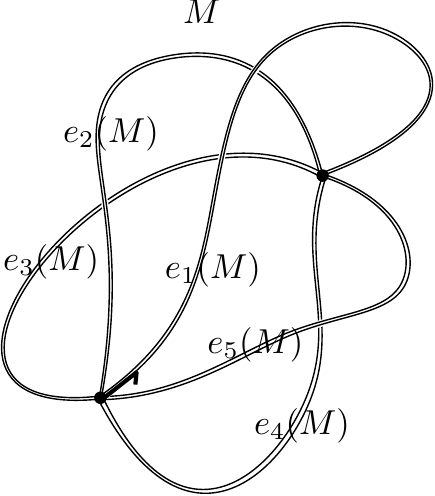}}
\subfloat[]{
	\label{subfig:m2}
	\includegraphics[width=0.32\linewidth]{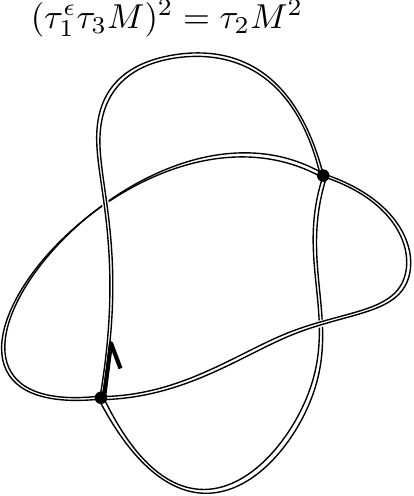}}
\subfloat[]{
	\label{subfig:m3}
	\includegraphics[width=0.32\linewidth]{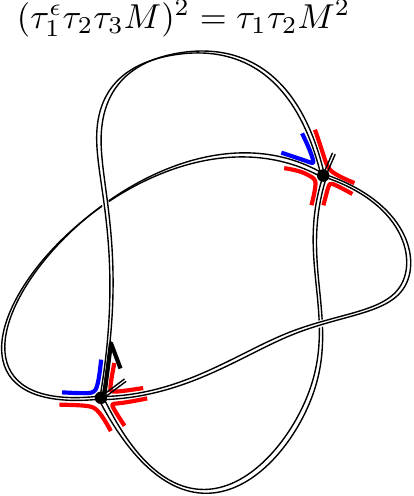}}
\caption{\protect\subref{subfig:m1} shows the map $M \in
\widetilde{\M}^{(5)}_{(5),(5);2}$ analyzed in this
section. \protect\subref{subfig:m2} shows that the map
$(\tau_1^\epsilon)\tau_3M)^2$ is unicellular and its root edge is
twisted. \protect\subref{subfig:m3} shows the map
$(\tau_1^\epsilon)\tau_2\tau_3M)^2$, which has two faces and the
corners lying in the same face have the same color.}
\label{fig:Przyklad'}
\end{figure}

Let us consider the following map $M \in
\widetilde{\M}^{(5)}_{(5),(5);2}$ presented in
\cref{subfig:m1}. Types of the root edges of the consecutive
maps obtained in the root-deletion process of $M$ are as follows:
$e_1(M)$ is a handle (of the first kind), $e_2(M)$ is a border, $e_3(M)$ is a
handle (of the first kind), $e_4(M)$ is a border again, and $e_5(M)$ is a
bridge. We would like to construct $\sigma_\OOO(M)$ by reversing the
root-deletion process of $M$ somehow in a way that:
\begin{itemize}
\item
the edges of $M$ and $\sigma_\OOO(M)$ with
the same labels have same types
\item the root edges of
$\sigma_\OOO(M)^3$ and $M^3$ are
    handles of different kinds. Similarly, the root edges of
$\sigma_\OOO(M)^1$ and $M^1$ are
    handles of different kinds.
\end{itemize} 
Thus, $\sigma_\OOO(M)^5 = M^5$ and $\sigma(M)^4=M^4$. Since the root edges of $\sigma_\OOO(M)^3$ and $M^3$ are handles
    of different kinds, $\si_\OOO(M)^3$ is obtained from $M^3$ by twisting its root edge. Thus, if we would like to build $\sigma(M)$ by
    twisting edges of $M$, it has to be of the following form: $\sigma(M)
    = \tau_1^{\epsilon_1}\tau_2^{\epsilon_2}\tau_3 M$, where
    $\epsilon_1, \epsilon_2 \in \{0,1\}$. However, for all these
    possible choices of $\epsilon_1,\epsilon_2$ the map
    $\tau_1^{\epsilon_1}\tau_2^{\epsilon_2}\tau_3 M$ does not satisfy
    above required properties. Indeed, the root
    edge of the map $(\tau_1^{\epsilon_1}\tau_2M)^2 = \tau_2 M^2$ is
    twisted. See \cref{subfig:m2}, which is a problem since we want this
    root edge to be a border. If we twist it, that is we
    consider the map $(\tau_1^{\epsilon_1}\tau_2\tau_3M)^2 = \tau_1\tau_2 M^2$, then its root edge is a
    border, so we fix previous problem. However, we encounter
    another case: half-edges $h_1(\tau_1^{\epsilon_1}\tau_2\tau_3M)$
    and $h_1 (\tau_1^{\epsilon_1}\tau_2\tau_3M)'$ lie in the same face of $\tau_1\tau_2 M^2$, thus
    the root edge of $\tau_1^{\epsilon_1}\tau_2\tau_3M$ cannot be a
    handles, see
    \cref{subfig:m2}. 

A crucial observation which helps to overcome the problem is the following: the set of corners
lying in the root face of $\tau_1\tau_2 M^2$ differs from the set of
corners lying in the root face of $M^2$ (compare \cref{subfig:m3} to
\cref{subfig:prz1}). We would like to fix this. One can show that if we erase the edges $e_1(M^2)$ and $e_2(M^2)$ from the map $M^2$, but
we do not erase the corresponding half-edges $h_1(M^2)$, $h_1(M^2)'$, $h_2(M^2)$, $h_2(M^2)'$, then there is a unique
way to choose two pairs from these four half-edges and draw two new 
edges connecting half-edges in each pair to obtain a map $M'$ different
from $M^2$ such that:
\begin{itemize}
\item
its
root edge is a border,
\item
the set of corners lying in its root face
is exactly the same as the set of corners lying in the root face of
$M^2$.
\end{itemize}
This map is shown in \cref{subfig:prz2}. Moreover, the root
edge of $(M')^2$ is a handle of the different kind than the root edge of
$M^3$. Now, we can finish the construction of $\sigma_\OOO(M)$. The set of corners
lying in the root face of $M'$ is the same as the set of corners lying in the root face of
$M^2$. Thus, if we connect the corners of $M'$ corresponding to
the root edge of $M$ by a new edge we will always create a handle (of two
possible kinds). Thus, it
is enough to connect these corners by a handle of different kind than
the root edge of $M$ to construct $\sigma_\OOO(M)$. In our case it is
a handle of the second kind and this map is shown in
\cref{subfig:prz3}. It is straightforward from this construction that
if we repeat this recipe to construct $\si_\OOO(\si_\OOO(M))$, we construct exactly the map $M$.

\begin{figure}
\centering
\subfloat[]{
	\label{subfig:prz1}
	\includegraphics[width=0.32\linewidth]{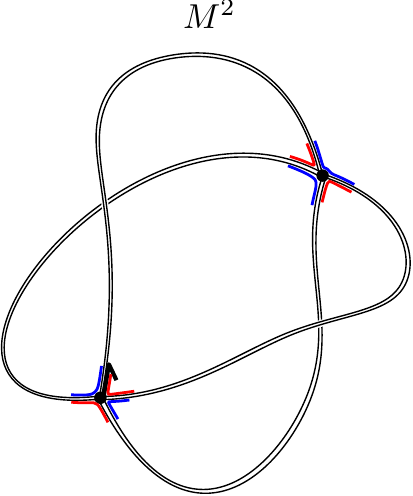}}
\subfloat[]{
	\label{subfig:prz2}
	\includegraphics[width=0.32\linewidth]{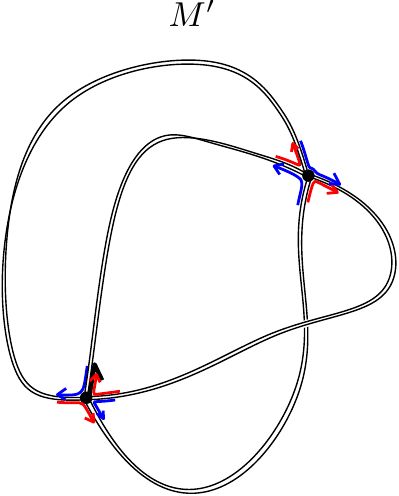}}
\subfloat[]{
	\label{subfig:prz3}
	\includegraphics[width=0.32\linewidth]{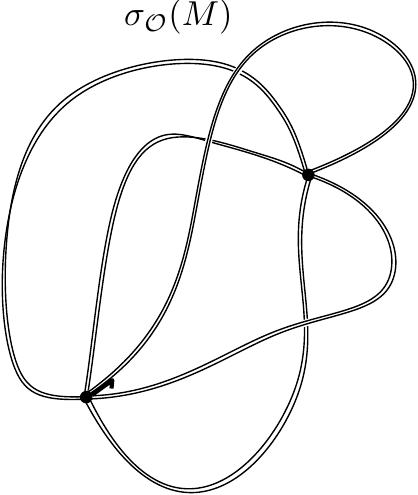}}
\caption{Comparing \protect\subref{subfig:prz1} to
  \protect\subref{subfig:prz2} we can see that both maps $M^2$ and
  $M'$ have two faces, and the corners in the same face have the same
  colors (red or blue), thus $C(M^2) = C(M')$. \protect\subref{subfig:prz3} shows the map
$\si_\OOO(M)$ obtained from $M'$ by connecting $h_1(M)$ with $h_1(M')$
in the appropriate way.}
\label{fig:Przyklad}
\end{figure}

It turned out that the general situation is basically the same. The next section is
devoted to the description of the structure of a map $M$ in a general
case when we cannot construct $\si_\OOO(M)$ simply by twisting some
edges of $M$. This description, which is given in
\cref{lem:order} and its proof are very technical. However, the reader should think that a general picture looks 
almost the same as the picture from this section:
a map $M$ might have a lot of edges, but only three edges play
important role in the construction of $\sigma_\OOO(M)$ and their role
is the same as the role of $e_1(M),e_2(M)$, and $e_3(M)$ in the above example.

\subsubsection{Details}
\label{subsec:details}

Let us fix a positive integer $n$, and partitions $\mu,\nu \vdash n$
such that $\ell(\mu) + \ell(\nu) = n-3$. Let $M \in
\widetilde{\M}^{(n)}_{\mu,\nu;2}$. Then \cref{eq:pomocnicze} states that there are no twisted edges in the root-deletion process
of $M$ (since $\ell(\mu)+\ell(\nu) = n-3$). Thus, for all positive
integers $k <i$
the root edge of $M^k$ is a bridge and there are two possible situations:

\begin{itemize}
\item
for all positive integers $i < k < j$ the root edge of $M^k$ is a bridge;
\item
there exists a unique positive integer $i < k < j$ such that the root
edge of $M^k$ is a border. Then there are still two possible cases:
\begin{itemize}
\item
$C(M^k) = C((\tau_j M)^k)$;
\item
$C(M^k) \neq C((\tau_j M)^k)$.
\end{itemize}
\end{itemize}

Note that the map $M$ presented in
\cref{subsec:example} was exactly the second case of the second case
in the above analysis. We start with a technical lemma that treats
this case in
general. All the symbols $n,i,j, \mu,\nu$ are as above and we recall that for any $m
\in [n]$ the half-edge $h_m(M)$ is the root of $M^m$ and $h_m(M)'$ is the second half-edge belonging to the root edge $e_m(M)$.

\begin{lemma}
\label{lem:order}
Let $M \in \widetilde{\M}^{(n)}_{\mu,\nu;2}$. We assume that there
exists a positive integer $i < k < j$ such that the root edge of $M^k$
is a border and such that $C(M^m) = C((\tau_j M)^m)$ for all positive
integers $m > k$, but $C(M^k) \neq C((\tau_j M)^k)$. 
Then:
\begin{enumerate}[label=(P\arabic*)]
\item
\label{item:lemma2}
if $M'$ is the map created from $M$ by erasing edges $e_k(M)$ and $e_j(M)$,
and merging pairs of half edges $\{h_k(M),h_j(M)'\}$ and
$\{h_j(M),h_k(M)'\}$ into edges $e_k'$ and $e_j'$, respectively (in
arbitrary way), then for all $m \in [n]$ the root edge of $M^m$ is a
bridge iff the root edge of $(M')^m$ is a bridge and $h_m(M) = h_m(M')$;
\item
\label{item:lemma3}
there is a unique way to construct a map $M'$ as in \ref{item:lemma2}
such that $C((M')^k) = C(M^k)$; then for all $m \in [n]$ types of
the root edges of $M^m$ and $(M')^m$ coincide and for any set $\OOO$
of orientations of all rooted maps the edges $e_j(M)$ and $e_j(M')$
are handles of different kinds. That is,
\[ \{\eta_\OOO(M^j), \eta_\OOO((M')^j)\} = \{0,1\}.\]
\end{enumerate}
\end{lemma}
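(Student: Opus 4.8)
The plan is to work entirely with the ribbon-graph description of the maps and to compare the root-deletion processes of $M$ and $M'$ edge by edge. At the level of the underlying graph, the reconnection defining $M'$ is a \emph{$2$-switch}: the four half-edges $h_k(M),h_k(M)',h_j(M),h_j(M)'$ are re-paired as $\{h_k(M),h_j(M)'\}$ and $\{h_j(M),h_k(M)'\}$. The first observation, which is immediate, is that this preserves the set of half-edges incident to each vertex, so the black and white vertex distributions $\mu,\nu$ are unchanged; moreover $h_k(M),h_j(M)$ sit at black vertices (being roots of $M^k,M^j$) while $h_k(M)',h_j(M)'$ sit at white vertices, so the two new pairs still join vertices of opposite colours and $M'$ is again bipartite. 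What the operation changes is only the gluing of the ribbons, i.e.\ the face structure recorded by $C(\cdot)$, and controlling this change is the entire content of the lemma.

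For \ref{item:lemma2} I would argue by a simultaneous induction on $m$ that $M^m$ and $(M')^m$ share the same root half-edge, $h_m(M)=h_m(M')$, and that $e_m(M)$ is a bridge exactly when $e_m(M')$ is. For $m\le k$ the edges $e_1,\dots,e_{m-1}$ deleted so far are literally unchanged by the reconnection, so only the bridge status of $e_m$ is at stake, and the $2$-switch on the pair $\{e_k,e_j\}$ can affect it solely through a change of \emph{global} connectivity; ruling this out is where the hypotheses must be used, since $e_j(M)$ being a handle and $e_k(M)$ a border, together with $C(M^m)=C((\tau_j M)^m)$ for $m>k$, pin down how the two half-edge pairs sit relative to the boundary walk and force the swap to preserve connectivity. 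For $k\le m\le j$ and for $m>j$ one continues the same bookkeeping, now also using that the four half-edges and their cyclic neighbours outside $\{k,j\}$ are untouched. This is the technically heaviest part of the argument.

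For \ref{item:lemma3} I would first settle uniqueness: each of the two new ribbons $e_k',e_j'$ admits two gluings, and exactly one of the resulting maps satisfies $C((M')^k)=C(M^k)$, because requiring the corner sets at step $k$ to agree pins down the relative orientation of the two gluings uniquely. Granting this choice, \ref{item:lemma2} already preserves bridges, and the remaining types (border, twisted, handle) are then forced to coincide for $M^m$ and $(M')^m$: one reads the type of a non-bridge edge off the change it induces in the number of faces, hence off $C(\cdot)$, and by construction $C((M')^m)=C(M^m)$ for every $m$ (for $m>k$ via the running hypothesis $C(M^m)=C((\tau_j M)^m)$ transported to $M'$, for $m\le k$ by the defining property of the chosen gluing). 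Finally, $\ell(\mu)+\ell(\nu)=n-3$ forces $j(M)=0$ by \cref{lem:NrOfHandles}, so $M^{j+1}=(M')^{j+1}$ carries neither a handle nor a twist in its own root-deletion process and is therefore orientable, whence $\eta_\OOO(M^{j+1})=0$; both $e_j(M)$ and $e_j(M')$ are handles over this orientable map, and the crosswise reconnection is precisely what reverses the side on which the $e_j$-ribbon closes up, so one is of the first kind and the other of the second. This gives $\{\eta_\OOO(M^j),\eta_\OOO((M')^j)\}=\{0,1\}$.

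The step I expect to be the genuine obstacle is the bridge-preservation claim inside \ref{item:lemma2}: since a $2$-switch can change global connectivity in general, the argument cannot rest on any generic property of edge swaps and must exploit the hypotheses on $C(M^m)$ and on the handle/border types of $e_j,e_k$ in an essential way. Everything else reduces to careful bookkeeping of half-edges and corner sets along the root-deletion process, of the same flavour as the construction in \cref{lem:involution}.
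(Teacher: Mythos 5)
Your proposal reproduces the skeleton of the paper's argument --- induction along the root-deletion process for \ref{item:lemma2}, uniqueness of the gluing via corner sets, and an orientation-reversal mechanism for the handle kinds --- but at each of the points where the lemma is actually hard you record that an argument is needed rather than giving one, so as it stands there is a genuine gap. The most serious one is the one you yourself flag: bridge preservation in \ref{item:lemma2}. The paper closes it with a concrete structural claim, namely that deleting the two edges containing $h_k(M)$ and $h_j(M)$ from $M^k$ leaves a \emph{connected} object. This follows from tracking face counts along the process (for $i<m\le k$ the map $M^m$ has two faces exactly when it contains $h_k(M)$, and is unicellular otherwise), which shows that $M^{k+1}$ is the genus-one unicellular map $M^j$ with trees planted in some of its corners; since $e_j(M)$ is a handle of $M^{k+1}$, its removal keeps the object connected, and connectivity of the complement of $\{e_k(M),e_j(M)\}$ in $M^k$ follows. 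Once one has this, the number of components of $M\setminus\{e_1(M),\dots,e_l(M)\}$ and of $M'\setminus\{e_1(M'),\dots,e_l(M')\}$ agree for every $l$, which is what makes your induction go through. Saying that the hypotheses ``pin down how the two half-edge pairs sit relative to the boundary walk'' is not a substitute for this step.

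The second gap is in \ref{item:lemma3}, where three separate facts are asserted without proof; in the paper all three come out of one corner-word computation. Writing the boundary walk of $M^{k+1}$ as a concatenation $l_1\cdot l_2$ of the words of the two faces of $M^{k+1}\setminus\{e_j(M)\}$, the hypothesis $C(M^k)\neq C((\tau_jM)^k)$ forces $h_k(M)$ to lie in a corner of $l_1$ and $h_k(M)'$ in a corner of $l_2$ (otherwise the two corner sets would coincide). This placement is what (a) makes $e_j'=\{h_j(M),h_k(M)'\}$ a handle in $(M')^j$ at all --- a fact you use but never establish; (b) yields, by comparing the sub-words $l_1',l_1'',l_2',l_2''$, that exactly one of the gluings satisfies $C((M')^k)=C(M^k)$; and (c) shows that the root-face traversal of $(M')^j$ induces on the non-root face of $M^{j+1}=(M')^{j+1}$ the orientation \emph{opposite} to the one induced by $M^j$, which by \cref{def:MeasureForOrientation} is exactly what makes the two handles be of different kinds. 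Your phrase ``the crosswise reconnection reverses the side on which the $e_j$-ribbon closes up'' gestures at (c) but is not accurate as stated: $e_j'$ pairs $h_j(M)$ with a different half-edge sitting in a different corner, so nothing is being re-glued ``on the other side'' of the same ribbon. Likewise, your justification of $C((M')^m)=C(M^m)$ for $m>k$ by ``transporting'' the hypothesis $C(M^m)=C((\tau_jM)^m)$ is unfounded: $M'$ is not $\tau_jM$, and relating the two maps is precisely what the structural analysis above is for.
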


\begin{proof}

Let $M'$ be a map constructed from $M$ as in
\ref{item:lemma2}. Strictly from the construction of root-deletion
process -- see \cref{subsec:TypesOfEdges} -- it is clear that if $m
\in [n]$, and the root edges of both $M^m$ and $(M')^m$ are not
bridges, then the roots of $M^{m}$ and $(M')^{m}$ coincide. It is
also clear that if the root edges of both $M^m$, and $(M')^m$ are bridges, and if the
second visited corner after the root corner in both maps $M^{m+1}$ and
$(M')^{m+1}$ coincide, then the root of $M^{m+1}$ and $(M')^{m+1}$
coincide, too.
So in order to prove \ref{item:lemma2} it is enough to show that for
all $m \in [n]$ the root edge of $M^m$ is a bridge iff the root edge
of $(M')^m$ is a bridge. We claim that 
\begin{enumerate}[label=($\star$)]
\item
\label{item:lemma4}
if one removes the edges containing $h_k(M)$ and $h_j(M)$ from $M^k$
then the resulting object $F$ is connected. Moreover, $F$ is obtained
by planting some maps into some corners of $M^{j+1}$.
\end{enumerate}
This claim easily implies the fact that for all $m \in [n]$ the root edge of $M^m$ is a bridge iff the root edge
of $(M')^m$ is a bridge. This may be shown by induction on
$m$. Assume that the root edge of $M^m$ is a bridge iff the root edge
of $(M')^m$ is a bridge for all $m < l \leq n$. This implies that for
all $m < l$ the root $h_m(M')$ of $(M')^m$ is equal to the root
$h_m(M)$ of $M^m$. Note that \ref{item:lemma4} implies that the number of
connected components of the graph $M\setminus\{e_1(M),\dots,e_l(M)\}$
is the same as the number of connected components of
$M\setminus\{e_k(M),e_j(M)\}\setminus\{e_1(M),\dots,e_l(M)\}$. But
strictly from the definition of $M'$, the last set is equal to
  $M'\setminus\{e_k',e_j'\}\setminus\{e_1(M'),\dots,e_l(M')\}$ which
 has the same number of connected components as
  $M'\setminus\{e_1(M'),\dots,e_l(M')\}$ by
  \ref{item:lemma4}. Thus, the root edge of $M^l$ is a bridge iff the root edge
of $(M')^l$ is a bridge, which finishes the proof of
\ref{item:lemma2}.

\begin{figure}
\centering
\subfloat[]{
	\label{subfig:Mk+1}
	\includegraphics[width=0.49\linewidth]{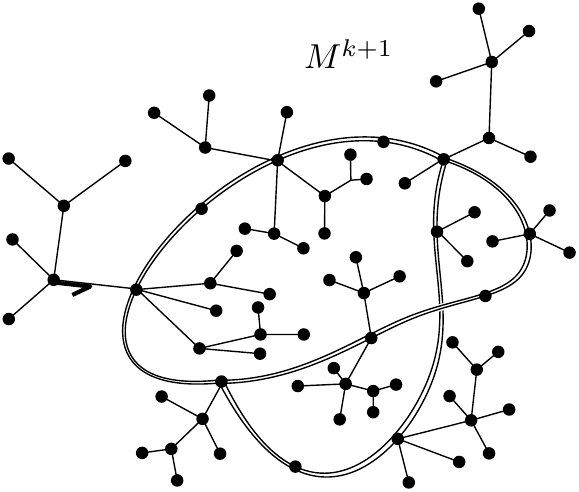}}
\subfloat[]{
	\label{subfig:Mk+1'}
	\includegraphics[width=0.49\linewidth]{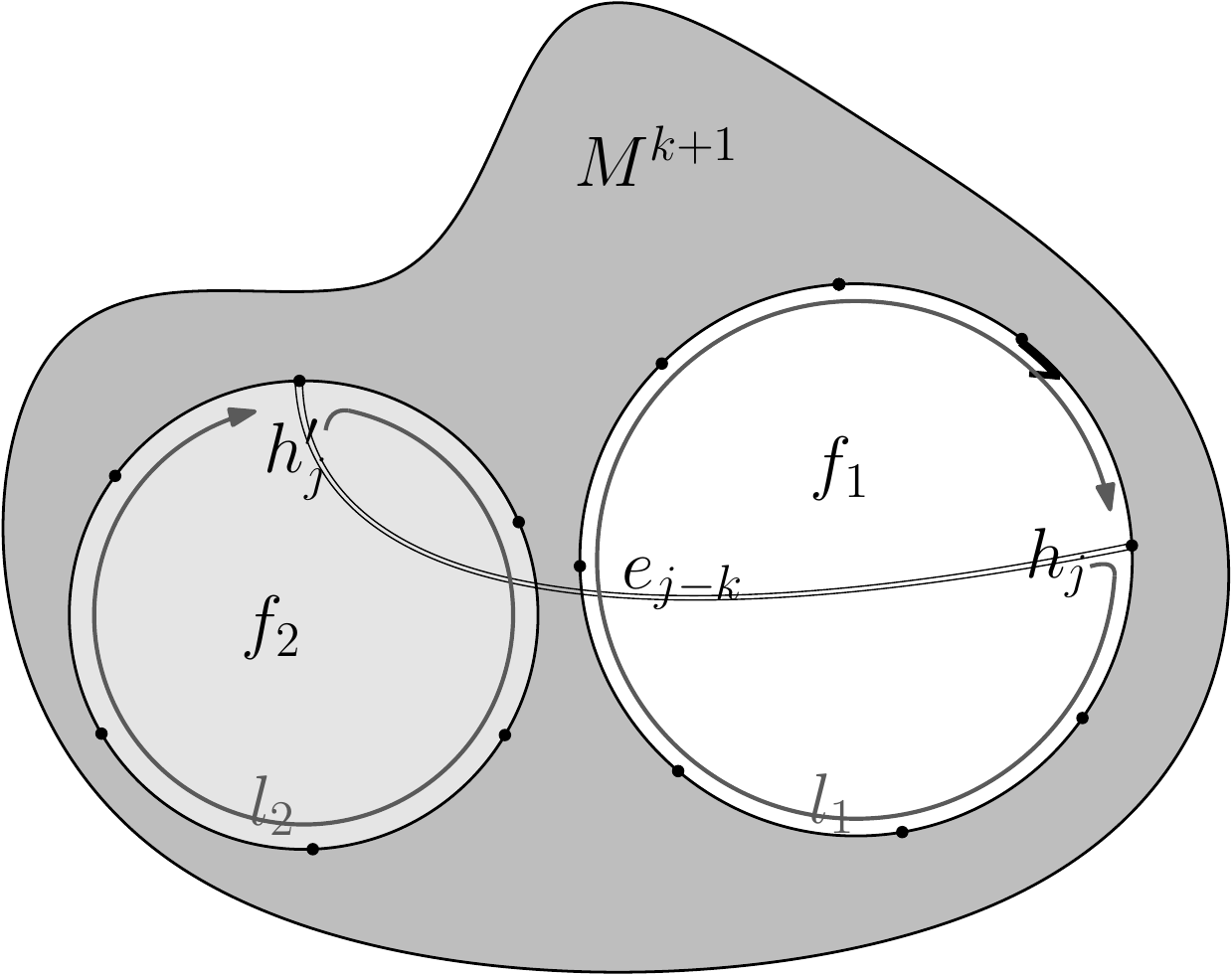}}
\caption{\protect\subref{subfig:Mk+1} shows that the unicellular map
  $M^{k+1}$ is obtained from the unicellular map $M^j$
  of genus $1$ by planting a collection of trees. \protect\subref{subfig:Mk+1'} represents
  diagrammatically the unicellular map $M^{k+1}$ -- the edge $e_j(M)$
  is its handle, and two areas $f_1,f_2$ represents two faces
  of the map $M^{k+1}\setminus\{e_j\left(M\right)\}$, and two
  oriented arcs $l_1,l_2$ correspond to the words given by
  reading consecutive corners visited in the root face.}
\label{fig:2handles}
\end{figure}

\medskip

In order to prove \ref{item:lemma4}, we need to analyze the structure
of the map $M^{k+1}$. This analysis will be also crucial in proving
\ref{item:lemma3}, (the structure of the map $M^{k+1}$ is shown on
\cref{fig:2handles}).
 First of all, it is clear from the classification of types of edges -- see \cref{subsec:TypesOfEdges} -- that 
\begin{itemize}
\item
for all $m \in [i]$ both maps $M^m$, and $(\tau_j M)^m$ are
unicellular,
\item
for $i < m \leq k$ both maps $M^m$, and $(\tau_j M)^m$ have two faces iff they contain a half-edge $h_k(M)$ (otherwise they are unicellular),
\item
for $k < m \leq j$ both maps $M^m$, and $(\tau_j M)^m$ are unicellular. 
\end{itemize}
As a result, all of the connected components of
$M\setminus\{e_1(M),\dots,e_{j-1}(M)\}$ are planar and unicellular
(thus they are trees),
except for the component containing the edge $e_j(M)$ which is also
unicellular but has genus equal to $1$. Indeed, it follows immediately
from Euler formula \cref{eq:euler} and from the definition of types of
edges given in \cref{subsec:TypesOfEdges}. Since the root edges of all
the maps $M^m$ are bridges for $k < m < j$, the map
$M^{k+1}$ is obtained from the unicellular map $M^j$ by planting some trees into some corners of it. In particular, the edge containing $h_j(M)$ has
the same type in both maps $M^j$ and $M^k$. Thus, it is a
handle. We conclude that $M^{k+1}\setminus\{e_j(M)\}$ is a map with two
faces -- in particular it is connected which proves our claim
\ref{item:lemma4}. 

\medskip

Above analysis says that if one removes the edge containing $h_j(M)$ from the map
$M^{k+1}$ but does not remove the corresponding half-edges $h_j(M)$
and $h_j(M)'$, the resulting object is a map with two faces $f_1$, and
$f_2$ and with two additional half-edges $h_j(M), h_j(M)'$ such that $h_j(M)$ lies in some corner belonging to $f_1$, and
$h_j(M)'$ lies in some corner belonging to $f_2$ (see
\cref{fig:2handles}). Let $l_1$ ($l_2$, respectively) be the word given
by reading consecutive corners of the face $f_1$ with additional
half-edge $h_j(M)$ ($f_2$ with additional half-edge $h_j(M)'$,
respectively) in a way that the word given by reading consecutive
corners of the unique face of the map $M^{k+1}$ with respect to the
root orientation is given by the concatenation $l_1\cdot l_2$ of the words
$l_1$ and $l_2$. See
\cref{fig:2handles}. For a given word $w$, we denote by
$\overleftarrow{w}$ the word obtained from $w$ by reading it backwards
(from right to left). Then the word given by reading consecutive
corners of the unique face of $(\tau_j M)^{k+1} = \tau_{j-k}\ M^{k+1}$
with respect to the root orientation is given by the concatenation
$l_1\cdot\overleftarrow{l_2}$.

\begin{figure}
\centering
\subfloat[]{
	\label{subfig:hard}
	\includegraphics[width=0.49\linewidth]{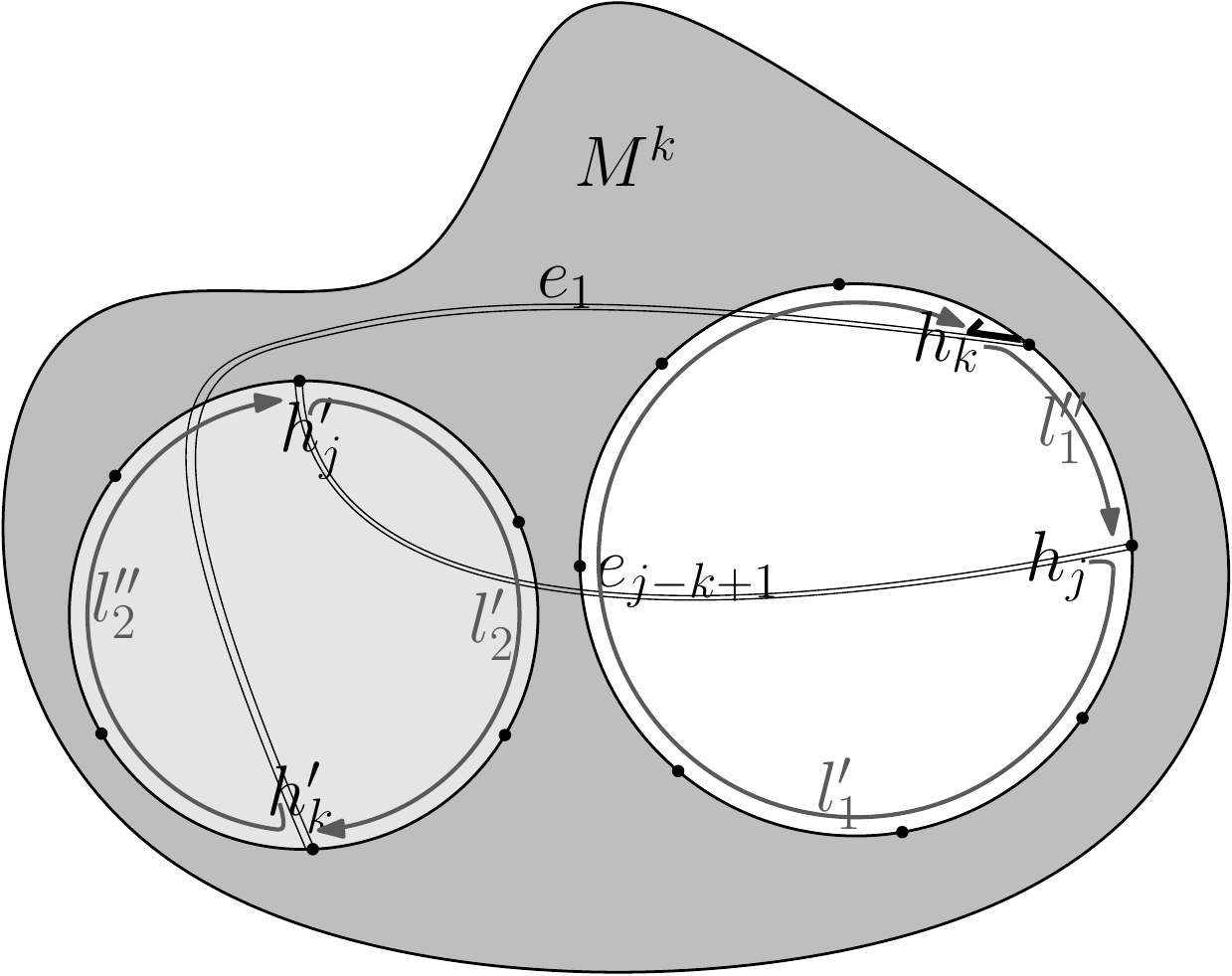}}
\subfloat[]{
	\label{subfig:hard'}
	\includegraphics[width=0.49\linewidth]{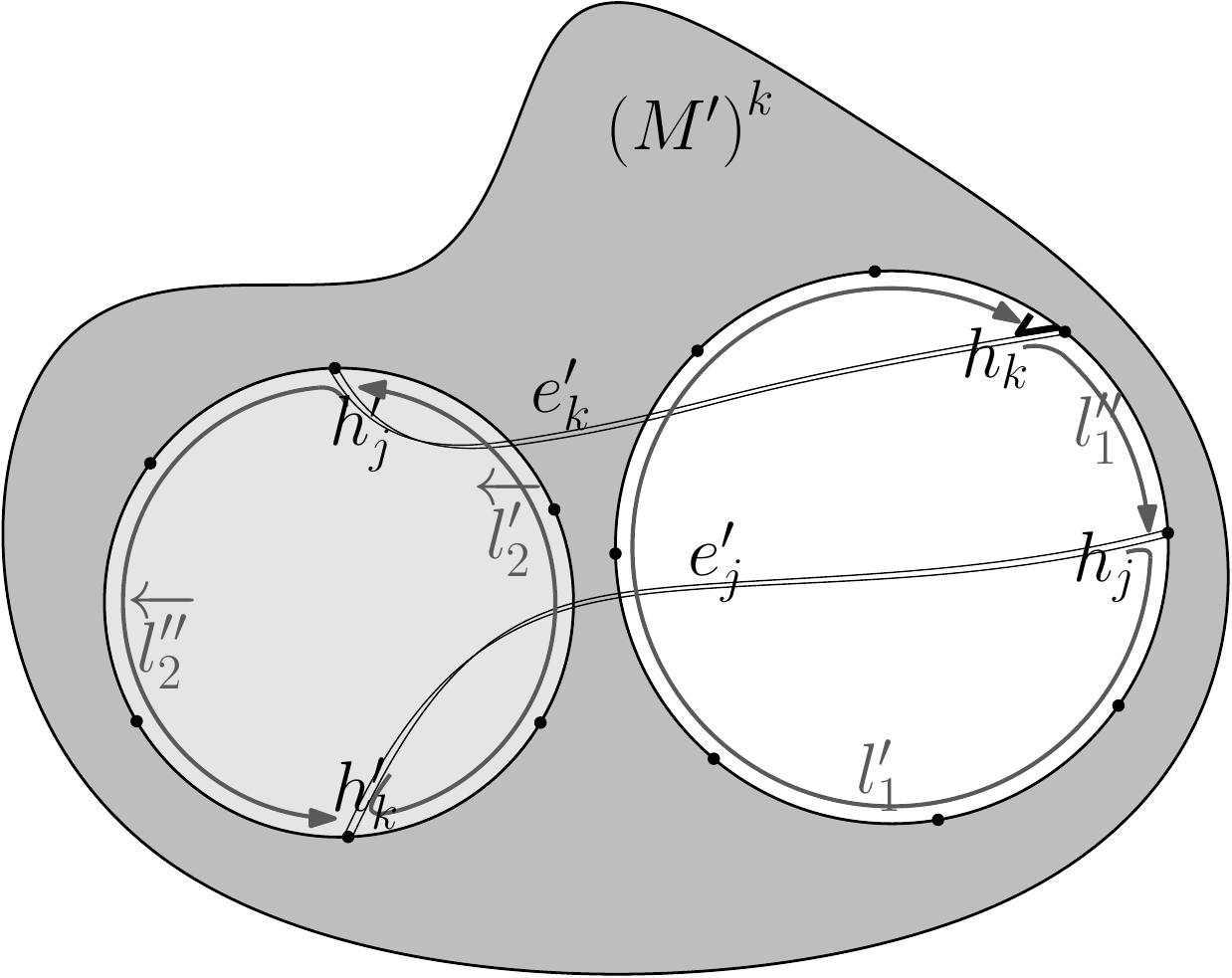}}
\caption{\protect\subref{subfig:hard} represents diagrammatically a
  map $M^k$ and \protect\subref{subfig:hard'} represents
  diagrammatically a map $(M')^k$. Indices of edges $e_k' =
  e_1\left((M')^k\right)$ and $e_j' = e_{j-k+1}\left((M')^k\right)$ distinguished in \protect\subref{subfig:hard'} correspond to their labels in $M'$. }
\label{fig:hard}
\end{figure}

Now, we recall our assumption that $C(M^k)\neq C((\tau_jM)^k)$. This
is equivalent to saying that the set of corners belonging to the root
face of $M^k$ is different than the set of corners belonging to the
root face of $(\tau_jM)^k$ (since both $M^k$ and $(\tau_jM)^k$ have
exactly two faces). Thus, the half-edge $h_k(M)$ is lying in
some corner belonging to $l_1$ and incident to $h_{k+1}(M)$ and $h_k(M)$ divides this corner into two new
corners. Similarly $h_k(M)'$ is lying in some corner belonging to $l_2$
and divides it into two new corners (indeed, if both half-edges
$h_k(M)$, and $h_k(M)'$ are lying in $l_1$, then, clearly, the sets
of corners belonging to the root face of $M^k$ and to the root face of
$(\tau_j M)^k$ coincide, which gives a contradiction with our
assumption). Let $l_1',l_1''$ and $l_2',l_2''$, respectively be two
new words obtained by reading consecutive corners between $h_k(M)$
and $h_j(M)$ and between $h_k(M)'$ and $h_j(M)'$, respectively, as
depicted in \cref{subfig:hard}. In other words, $l_1$ is a concatenation of
$(l_1')_-$, a letter $c_1$ which corresponds to the corner of
$M^{k+1}$ containing $h_k(M)$, and $_-(l_1'')$, where
$w_-$ ($_-w$, respectively) is a word obtained from $w$ by removing
its last (first, respectively) letter. Similarly, $l_2$ is a
concatenation of $(l_2')_-$, a letter $c_2$ which correspond to the
corner of $M^{k+1}$ containing $h_k(M)'$, and $_-(l_2'')$. Then the word given by reading consecutive
corners of the root face of $M^k$, starting from the first corner
visited after the root corner, is given by the concatenation
$l_2''\cdot l_1'$, while the word obtained by reading the
corners visited consecutively in the root face of $(\tau_j M)^k$,
starting from the first corner visited after the root corner, is given by
the concatenation $l_2''\cdot
\overleftarrow{l_1''}\cdot\overleftarrow{l_2'}\cdot l_1'$. In particular, the map $(\tau_j M)^k$ is unicellular, and its root is twisted, which shows that $C(M^k)\neq C((\tau_jM)^k)$, as we assumed. 

\medskip

Finally, in order to construct a map $M'$ as in \ref{item:lemma2} such
that $C((M')^k) = C(M^k)$ we need to merge $h_k(M)$ with $h_j(M)'$
and $h_j(M)$ with $h_k(M)'$ in a way such that the word given by reading
consecutive corners of the root face of $(M')^k$ with respect to the
root orientation and starting from the first visited corner after the
root corner is given by the concatenation of $\overleftarrow{l_2''}$
and $l_1'$. See \cref{subfig:hard'}. We recall that for $k+1 \leq m
\leq j$, if the unicellular map $M^m$ ($(M')^m$, respectively)
contains $h_j(M)$, then the edge containing $h_j(M)$ is a
handle. Moreover, if we equip two faces $f_1,f_2$ of
$M^m\setminus\{e_j\left(M^m\right)\}$ ($f_1',f_2'$ of
$(M')^m\setminus\{e_j\left(M^m\right)\}$, respectively), where $f_1$
($f_1'$, respectively) is the root face, into the orientation
inherited from the the root face of $M^m$ ($(M')^m$, respectively),
then the orientations of $f_1$, and $f_1'$ are the same, while the
orientations of $f_2$ and $f_2'$ are opposite to each other (compare
\cref{subfig:hard} to \cref{subfig:hard'}). By \cref{def:MeasureForOrientation}
\[ \{\eta_{\OOO}(M^j), \eta_{\OOO}((M')^j)\} = \{0,1\}. \]
Thus, $e_j(M)$ and $e_j(M')$ are handles of dfferent kinds.
In particular, for all integers $m \geq k$ the types of the root edges
of $M^m$ and $(M')^m$ are the same. Since $C((M')^k) = C(M^k)$ and since for all $m \in [k-1]$ the root edges of $M^m$ and $(M')^m$ contain the same pairs of half-edges and are not twisted nor bridges, they have the same types, which finishes the proof. 
\end{proof}

Now we are going to show that the assumption of
\cref{lem:order} that the root edge of $M^k$
is a border is in fact implied by the assumption that $C(M^k) \neq C((\tau_j M)^k)$.

\begin{lemma}
\label{lem:pomoc}
Let $M \in \widetilde{\M}^{(n)}_{\mu,\nu;2}$, let $i <j$ be the labels of
the edges that are handles in the root-deletion process of $M$, and let $k < j$ be the
largest positive integer such that $C(M^k) \neq C((\tau_j M)^k)$ (we
assume that it exists). 
Then necessarily $i < k < j$ and the root edge of $M^k$ is a border.
\end{lemma}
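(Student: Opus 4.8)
The plan is to reduce the whole statement to tracking a single Boolean quantity along the root-deletion, namely whether twisting $e_j$ changes the face partition. First I would record the basic dictionary. By \cref{prop:blabla}, since $e_j$ is a handle (hence not a bridge) at the stage $M^j$, twisting $e_j$ does not relabel the edges with labels $\le j$, so $(\tau_j M)^m = \tau_{e_j}(M^m)$ for every $m \le j$, while $(\tau_j M)^m = M^m$ for $m > j$. Moreover, because $M^m$ and $\tau_{e_j}(M^m)$ differ only in the gluing of the single edge $e_j$, one has $C(M^m) \ne C((\tau_j M)^m)$ if and only if twisting $e_j$ changes the number of faces of $M^m$, which, by the classification of types applied to the edge $e_j$ viewed inside $M^m$, happens exactly when $e_j$ is a border or a twisted edge of $M^m$.

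The heart of the argument is the claim that $e_j$ is never a twisted edge of $M^m$ for $1 \le m \le j$. I would prove this by following the orientability-coherence of the two occurrences of $e_j$ along the face boundary (equivalently, whether $e_j$ sits as a crosscap or not). Deleting a bridge, a border, or a handle splices the face-boundary words without reversing any segment, hence preserves this coherence for every other edge; only deleting a twisted edge reverses a segment and could flip it. Since no twisted edge occurs in the root-deletion of $M$ (by \cref{eq:pomocnicze} one has $j(M)=0$, as $g(M)=2$ and $i=2$ here), the status of $e_j$ is constant along $M^1, \dots, M^j$, and in $M^j$ the edge $e_j$ is a handle, i.e.\ coherent. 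Therefore $e_j$ is never twisted, so for $m \le i$, where $M^m$ is unicellular (all of $e_1,\dots,e_{i-1}$ are bridges, since a one-face map has no border as root edge, there are no twisted edges, and $e_i$ is the first handle), the edge $e_j$ can only be a handle and twisting it preserves the single face; hence $C(M^m) = C((\tau_j M)^m)$ for all $m \le i$. Together with the equality for $m \ge j$ (immediate for $m>j$, and for $m=j$ because $e_j$ is a handle), this confines the discrepancy to $i < m < j$, so $i < k < j$.

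It remains to identify the type of $e_k$. By maximality, $C(M^k)\ne C((\tau_j M)^k)$ while $C(M^{k+1})=C((\tau_j M)^{k+1})$; by the dictionary above, and since $e_j$ is never twisted, this means $e_j$ is a border of $M^k$, with its two half-edges lying in two distinct faces, whereas in $M^{k+1}$ the two half-edges of $e_j$ lie in a single face. Passing from $M^k$ to $M^{k+1}$ deletes only the edge $e_k$, and the sole way two distinct faces can merge into one is that $e_k$ is the edge separating them, i.e.\ a border. I would double-check the two degenerate possibilities: $e_k$ cannot be a bridge, since deleting a bridge only detaches a planar unicellular component planted in one corner and does not change which faces are incident to $e_j$; and $e_k$ cannot be a handle, because the only handles are $e_i$ and $e_j$ and $i<k<j$. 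Hence $e_k$ is a border, as claimed.

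The step I expect to be the main obstacle is the invariance claim that deleting non-twisted edges preserves the orientability-coherence (crosscap status) of $e_j$; making it rigorous requires precisely the face-word and corner bookkeeping carried out in the proof of \cref{lem:order} (the reading words $l_1,l_2$ and their concatenations, together with the reversal $\overleftarrow{\,\cdot\,}$ produced by a twist), now specialized to control the two occurrences of the single edge $e_j$. Everything else---the reduction through \cref{prop:blabla}, the unicellularity of $M^m$ for $m\le i$, and the final border identification---is routine once this invariance is in hand.
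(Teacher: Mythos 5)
Your reduction to the ``dictionary'' (namely that $C(M^m) \neq C((\tau_j M)^m)$ holds exactly when $e_j$, viewed as an edge of $M^m$, is a border or a twisted edge, while handles and bridges never produce a discrepancy) is correct, and so is your final step identifying $e_k$ as a border once one knows that $e_j$ is not twisted in $M^k$ for $i<k<j$. The genuine gap is the central claim that $e_j$ is \emph{never} twisted in $M^m$ for all $m \le j$ because its ``coherence status is constant along $M^1,\dots,M^j$''. That status (parallel versus antiparallel occurrences in a face word) is only defined while both sides of $e_j$ lie on a \emph{single} face, and it becomes undefined exactly when $e_j$ is a border. This matters at the one transition your argument waves through: in the discrepancy case, $e_j$ is a border of $M^{i+1}$, its two sides lying on the two distinct faces of $M^{i+1}$; passing from $M^{i+1}$ to $M^i$ re-attaches the handle $e_i$, which merges precisely those two faces, and the face word of $M^i$ splices the word of one face with the word of the other read in one of its two possible directions, depending on the gluing of $e_i$. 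One gluing makes the two occurrences of $e_j$ antiparallel (handle), the other makes them parallel (twisted), and \emph{both} gluings are legitimate elements of $\widetilde{\M}^{(n)}_{\mu,\nu;2}$: they are $M^i$ and $\tau_1(M^i)$, and twisting a handle yields a handle, so the root-deletion structure is unchanged (\cref{prop:blabla}). Concretely, take the map $M \in \widetilde{\M}^{(5)}_{(5),(5);2}$ of \cref{subsec:example}, where $i=1$, $j=3$ and $e_3$ is a border of $M^2$: for exactly one of $M$ and $\tau_1 M$ the edge $e_3$ is twisted in the full map, so for that map your claim fails at $m=1\le i$, and by your own dictionary $C(M^1)\neq C((\tau_3 M)^1)$. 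Hence your derivation of $i<k<j$ (``the discrepancy is confined to $i<m<j$ because there is none at $m\le i$'') is not justified.

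The lemma itself survives, but only because of a compensating fact your proof never establishes: a discrepancy at some $m\le i$ can arise only when $e_j$ is a border of $M^{i+1}$, and propagating through the bridges between $i$ and the border position forces $e_j$ to be a border of $M^{k_0}$ as well, i.e.\ forces a discrepancy at $k_0>i$; therefore the \emph{largest} discrepancy still lies strictly between $i$ and $j$ and sits at the border. You need this extra case analysis (or a substitute) to close the argument. Note that the paper's own proof sidesteps crosscap-tracking entirely: it records how, when the root edge of $M^m$ is a bridge or a handle, the set $C(M^m)$ is determined by $C(M^{m+1})$ together with data unchanged by twisting $e_j$, so the equality $C(M^m)=C((\tau_j M)^m)$ propagates downward from $m=n$ and can first fail only at a position whose root edge is a border; since borders occur only strictly between $i$ and $j$, both conclusions of the lemma follow simultaneously, with no need to know the type of $e_j$ inside $M^m$ at all. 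Your analysis restricted to the range $i<m<j$ is sound; it is only the handle transition at $m=i$ where the proposal is genuinely wrong.
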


\begin{proof}

We recall from the beginning of \cref{subsec:details} that there are
two possible cases:
\begin{itemize}
\item
for all positive integers $m \in [j]\setminus\{i, j\}$ the root edge of $M^m$ is a bridge;
\item
there exists the unique positive integer $i < k < j$ such that the root
edge of $M^k$ is a border and for all positive integers $m \in [j]\setminus\{i, k,j\}$ the root edge of $M^m$ is a bridge.
\end{itemize}

We are going to compare the sets $C(M^{m+1})$ and $C(M^m)$ in the following cases:
\begin{itemize}
\item
the root edge of $M^m$ is a bridge: there exists $l >
m+1$ such that $M^m\setminus\{e_m\left(M\right)\} = M^{m+1}\cup M^l$
and there exist two corners $c_1 \in f_1 \in C(M^{m+1})$, and $c_2 \in
f_2 \in C(M^l)$ which correspond to the root corners of $M^{m+1}$ and
$M^l$, respectively, and which are divided by $h_m(M)$ and $h_m(M)'$,
respectively, into two pairs of new corners $c_1',c_1''$, and $c_2',c_2''$ such that
\begin{align*} 
C(M^{m}) &= \underbrace{C(M^{m+1})\setminus\{f_1\}}_{\text{the set of corners belonging to the faces of $M^{m+1}$ different than the root face}} \\
&\cup \underbrace{C(M^l)\setminus\{f_2\}}_{\text{the set of corners belonging to the faces of $M^l$ different than the root face}}\\
&\cup \underbrace{\{f_1\setminus\{c_1\} \cup f_2\setminus\{c_2\}\cup
  \{c_1',c_1'',c_2'c_2''\}\}.}_{\text{the set of corners belonging to
  the root face of $M^m$ obtained by merging root faces of $M^{m+1}$ and $M^l$}}
\end{align*}
\item
the root edge of $M^m$ is a handle: there exist two
corners $c_1 \in f_1 \in C(M^{m+1})$, and $c_2 \in f_2 \in C(M^{m+1})$
containing $h_m(M)$ and $h_m(M)'$, respectively, such that $f_1 \neq
f_2$, and $h_m(M)$, $h_m(M)'$, respectively, divides $c_1$ and $c_2$
into two pairs of new corners $c_1',c_1''$, and $c_2',c_2''$,
respectively. Thus,
\begin{align*} 
C(M^{m}) &= \underbrace{C(M^{m+1})\setminus\{f_1,f_2\}}_{\text{the set
           of corners belonging to the faces of $M^{m+1}$ different
           from the faces merged by a handle $e_m(M)$}} \\
&\cup \underbrace{\{f_1\setminus\{c_1\} \cup f_2\setminus\{c_2\}\cup \{c_1',c_1'',c_2'c_2''\}\}.}_{\text{the set of corners belonging to the root face of $M^m$ obtained by merging faces $f_1$ and $f_2$}}
\end{align*}
\end{itemize}
If for all $m \in [j]\setminus\{i,j\}$ the root edge of $M^m$ is a
bridge then by \cref{prop:blabla} also the root edge of $(\tau_j M)^m$
is a bridge thus for all $m \leq j$ types of the root edges of
$M^m$ and $(\tau_j M)^m$ coincide and they are either bridges or
handles. Since for all $m > j$ the maps $M^m$ and $(\tau_j M)^m$ are
the same, above analysis gives immediately that $C(M^m) = C((\tau_j
M)^m)$ holds true for all $m \in [n]$. This proves that if there exists
$k \in [n]$ such that  $C(M^k) \neq C((\tau_j M)^k)$ and such that $k$
is the greatest possible, then necessarily $i < k < j$ and the root edge of $M^k$ is a border, which proves \cref{lem:pomoc}.

\end{proof}

We are finally ready to construct the promised involution.

\begin{lemma}
\label{lem:2handles}
Let $\OOO$ be a set of orientations and let us fix a positive integer $n$ and partitions $\mu, \nu \vdash n$ such that $\ell(\mu) + \ell(\nu) = n-3$. Then, there exists an involution
\[ \sigma_\OOO : \widetilde{\M}^{(n)}_{\mu,\nu;2} \to \widetilde{\M}^{(n)}_{\mu,\nu;2}\]
such that
\[ \eta_\OOO\left(\sigma_\OOO(M)\right) = 2 - \eta_\OOO(M). \]
\end{lemma}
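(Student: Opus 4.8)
The plan is to reduce the statement to reversing the \emph{kind} of each of the two handles of $M$. Fix $M \in \widetilde{\M}^{(n)}_{\mu,\nu;2}$; here $\tau=(n)$, so $\ell(\tau)=1$, and since $\ell(\mu)+\ell(\nu)=n-3$ the Euler formula together with \cref{eq:pomocnicze} gives $2g(M)=n+2-(\ell(\mu)+\ell(\nu)+1)=4$ and $j(M)=2g(M)-2\cdot 2=0$. Thus the root-deletion process of $M$ contains no twisted edge and exactly two handles, at positions $i<j$, while every other edge is a bridge or a border; consequently $\eta_\OOO(M)\in\{0,1,2\}$ equals the number of handles among $e_i(M),e_j(M)$ that are of the second kind. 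Hence it suffices to produce an involution that reverses the kind of both handles while keeping, at every position, the type (bridge/border/handle) of the corresponding edge unchanged: such a map sends $\eta_\OOO$ from $r$ to $2-r$, and, because twisting preserves vertex distributions by \cref{prop:blabla} and the surgery used below only reconnects half-edges, its image again lies in $\widetilde{\M}^{(n)}_{\mu,\nu;2}$ (the genus being forced by the type sequence via \cref{lem:NrOfHandles}).

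The definition of $\sigma_\OOO$ splits according to \cref{lem:pomoc}: either \textbf{(i)} $C(M^m)=C\big((\tau_j M)^m\big)$ for every $m\in[n]$, or \textbf{(ii)} there is a (necessarily unique) integer $i<k<j$ for which $e_k(M)$ is a border and $C(M^k)\neq C\big((\tau_j M)^k\big)$.

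In case \textbf{(i)} I set $\sigma_\OOO(M):=\tau_i\tau_j M$. Since $e_j(M)$ is a handle, deleting it commutes with twisting it, so $(\tau_j M)^m=M^m$ for $m>j$; and as the face sets $C(M^m)$ and $C\big((\tau_j M)^m\big)$ agree for all $m$, twisting $e_j$ changes no edge type and merely turns $e_j$ into a handle of the opposite kind. The operator $\tau_i$ then acts at the earlier position, where $e_i$ is still a handle: it leaves $(\cdot)^m$ unchanged for $m>i$ and flips the kind of $e_i$. Thus both handles are reversed and the type sequence is intact. For involutivity one checks that the construction keeps us in case \textbf{(i)}; since $e_i,e_j$ are non-bridges, \cref{prop:blabla} gives $\tau_i\tau_j=\tau_j\tau_i$ and $\tau_i^2=\tau_j^2=\id$, whence $\sigma_\OOO(\sigma_\OOO(M))=\tau_i\tau_j\tau_i\tau_j M=M$.

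In case \textbf{(ii)} I invoke \cref{lem:order}: let $M'$ be the unique map obtained by erasing $e_k(M),e_j(M)$ and remerging the four half-edges so that $C\big((M')^k\big)=C(M^k)$. By \cref{lem:order} the map $M'$ has exactly the same type sequence as $M$, and $e_j(M),e_j(M')$ are handles of opposite kinds, so the surgery flips $e_j$ while leaving $e_i$ a handle of the unchanged kind. I then put $\sigma_\OOO(M):=\tau_i M'$, so that $\tau_i$ additionally flips $e_i$; once more both handles are reversed and every type is preserved. The delicate point—and the step I expect to be the crux—is involutivity: one must verify that $\tau_i M'$ again falls into case \textbf{(ii)} with the same border $e_k$, and that re-running the surgery recovers $M$. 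This follows because the remerging prescription of \cref{lem:order} is symmetric in $M$ and $M'$ (it is dictated by the common face word $\overleftarrow{l_2''}\cdot l_1'$, exactly as in the example of \cref{subsec:example}), and because $\tau_i$, acting at position $i<k<j$ on a non-bridge handle, commutes with the surgery by \cref{prop:blabla}. Combining the two cases yields the required involution $\sigma_\OOO$ on $\widetilde{\M}^{(n)}_{\mu,\nu;2}$ satisfying $\eta_\OOO(\sigma_\OOO(M))=2-\eta_\OOO(M)$.
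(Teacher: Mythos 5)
Your reduction---no twisted edges, $\eta_\OOO(M)$ equals the number of second-kind handles among $e_i(M),e_j(M)$, so it suffices to flip the kind of both handles while preserving all edge types---is exactly the paper's frame, as is the case split via \cref{lem:pomoc} and the appeal to \cref{lem:order} in case (ii). The gap is in the claim that a \emph{fixed} map ($\tau_i\tau_j M$ in case (i), $\tau_i M'$ in case (ii)) performs the flip. You treat the kind of the earlier handle $e_i$ as an intrinsic attribute of that edge, unchanged when the part of the map below it is modified and toggled only by $\tau_i$ itself. By \cref{def:MeasureForOrientation}, however, the kind of $e_i$ is determined by comparing a corner orientation against the orientation that $\OOO$ assigns to the map obtained after deleting $e_1,\dots,e_i$; for $\tau_i\tau_j M$ that map is $(\tau_j M)^{i+1}$, which is a \emph{different} map from $M^{i+1}$ (it contains the twisted edge). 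A set of orientations is an arbitrary choice of one orientation per map, with no compatibility imposed between a map and its twists, so $\OOO$ may orient $(\tau_j M)^{i+1}$ in such a way that $e_i$ has the opposite kind in $\tau_j M$ to the one it has in $M$. Writing $a,b\in\{0,1\}$ for the kinds of $e_i,e_j$ in $M$ and $c$ for the kind of $e_i$ in $\tau_j M$, one has $\eta_\OOO(M)=a+b$ and $\eta_\OOO(\tau_i\tau_j M)=(1-c)+(1-b)$, which equals $2-\eta_\OOO(M)$ if and only if $c=a$; when $c=1-a$ your candidate misses the target by $\pm 1$. The same objection applies verbatim in case (ii): \cref{lem:order}~\ref{item:lemma3} controls the kind of $e_j$ only, and since $(M')^{i+1}\neq M^{i+1}$, nothing controls the kind of $e_i$ in $M'$ relative to $M$.

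This is precisely why the paper does not fix the exponent of $\tau_i$: it considers the two candidates $\tau_j M$ and $\tau_i\tau_j M$ (resp.\ $M'$ and $\tau_i M'$). These coincide at all levels $m>i$ and differ by twisting the handle at level $i$, so by the handle property in \cref{def:MeasureForOrientation} their $\eta_\OOO$-values are consecutive integers $\{1-b,\,2-b\}$; since $2-\eta_\OOO(M)=2-a-b$ lies in this set, exactly one candidate attains it, and one defines $\sigma_\OOO(M):=\tau_i^{\epsilon}\tau_j M$ (resp.\ $\tau_i^{\epsilon}M'$) for that $\OOO$-dependent $\epsilon\in\{0,1\}$. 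Involutivity survives this non-fixed choice: by \cref{prop:blabla} (and, in case (ii), the symmetry of the surgery that you correctly identified), $M$ itself is one of the two candidates for $\sigma_\OOO(\sigma_\OOO(M))$, it attains the required value $2-\eta_\OOO(\sigma_\OOO(M))=\eta_\OOO(M)$, and since the two candidates' values differ by one, only one of them can attain it; hence $\sigma_\OOO(\sigma_\OOO(M))=M$. With this correction, your argument becomes the paper's proof.
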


\begin{proof}
Let us fix $M \in \widetilde{\M}^{(n)}_{\mu,\nu;2}$ and let $i <j$ be the labels of
the edges that are handles in the root-deletion process of $M$.

There are two possible cases: for all positive integers $m$ sets
$C(M^m)$ and $C((\tau_jM)^m)$ are the same, or not. If they coincide,
we consider two maps: $M_1 := \tau_i \tau_j M$, and $M_2 := \tau_j
M$. \cref{prop:blabla} ensures that labels in $M_1$, and $M_2$ are the
same as in $M$. Thus, types of the root edges of all three maps
$M^m, M_1^m$, and $M_2^m$ coincide, too. In particular, the root edges of $M^j$, and $M_1^j = M_2^j = \tau_1 (M^j)$ are
handles so we have 
\begin{equation}
\label{eq:mapy12}
\{\eta_{\OOO}(M^j), \eta_{\OOO}(\tau_1 M^j)\} = \{0,1\}.
\end{equation}
Moreover, the root edge of both $M_1^i$, and $M_2^i$ is a handle and
\begin{multline*}
\{\eta_\OOO(M_1^i), \eta_\OOO(M_2^i) \} = \{\eta_\OOO(\tau_1(\tau_j M)^i), \eta_\OOO((\tau_j M)^i) \}
= \{\eta_\OOO((\tau_j M)^{i+1}), \eta_\OOO((\tau_j M)^{i+1})+1 \}.
\end{multline*}
Combining this with \cref{eq:mapy12}, we obtain that there exists an integer $l \in \{1,2\}$ such that
 \[\eta_\OOO(M_l) = 2 - \eta_\OOO(M). \]
In other words, there exists $\epsilon \in \{0,1\}$ such that $M_l = \tau_i^\epsilon \tau_j M$. We define:
\[ \si_\OOO(M) := \tau_i^\epsilon \tau_j M.\]
Now, it is straightforward from the construction that $\si_\OOO(M) \in \widetilde{\M}^{(n)}_{\mu,\nu;2}$ and $\si_\OOO(\si_\OOO(M))$ is again of the same form. That is, $\si_\OOO(\si_\OOO(M)) := \tau_i^\epsilon \tau_j\si_\OOO(M)$. Thus, by \cref{prop:blabla},
\[ \si_\OOO(\si_\OOO(M)) = \tau_i^{2 \epsilon} \tau_j^2 M = M,\]
which finishes the proof in the first case.

\medskip

Now, we are going to treat the second case. That is, we assume that
there exists $k \in [n]$ such that $C(M^k) \neq C((\tau_j M)^k)$, and
we choose the greatest possible $k$ with this property. Let $M'$ be
the unique map as in \cref{lem:order} \ref{item:lemma3} associated with
$M$ and we define two maps $M_1 := M', M_2 := \tau_i M'$. First of
all, \cref{prop:blabla}, \cref{lem:order}, and \cref{lem:pomoc} state
that for all positive integers $m$ types of the root edges of all
three maps $M,M_1$ and $M_2$ coincide. Thus, $M,M_1,M_2 \in
\widetilde{\M}^{(n)}_{\mu,\nu;2}$. Moreover, \cref{prop:blabla} and \cref{lem:order} say that for all positive integers $m$ the roots of all three maps $M^m, M_1^m$, and $M_2^m$ are the same. Thus, for all positive integers $m > i$ maps $M_1^m$ and $M_2^m$ coincide and
\begin{equation}
\label{eq:mapy22}
\{\eta_{\OOO}(M^j), \eta_{\OOO}(M_1^j) = \eta_{\OOO}(M_2^j) = \eta_{\OOO}((M')^j)\} = \{0,1\},   
\end{equation}
by \cref{lem:order} \ref{item:lemma3}. Also
\begin{multline*}
\{\eta_\OOO(M_1^i), \eta_\OOO(M_2^i) \} = \{\eta_\OOO(\tau_1(M')^i), \eta_\OOO((M')^i) \} \\
= \{\eta_\OOO((M')^{i+1}), \eta_\OOO((M')^{i+1})+1 \} = \{\eta_\OOO((M')^{j}), \eta_\OOO((M')^{j})+1 \},
\end{multline*}
and combining it with \cref{eq:mapy22}, we obtain that there exists an integer $l \in \{1,2\}$ such that
 \[\eta_\OOO(M_l) = 2 - \eta_\OOO(M). \]
In other words, there exists $\epsilon \in \{0,1\}$ such that $M_l = \tau_i^\epsilon M'$, and we define:
\[ \si_\OOO(M) := \tau_i^\epsilon M'.\]
Now, it is straightforward from the construction that
$\si_\OOO(\si_\OOO(M))$ is again of the same form. That is
$\si_\OOO(\si_\OOO(M)) := \tau_i^\epsilon \si_\OOO(M)'$, where
$\si_\OOO(M)'$ is a map given by \cref{lem:order} \ref{item:lemma3}. Thus,
\[ \si_\OOO(\si_\OOO(M)) = \tau_i^\epsilon (\tau_i^\epsilon M')' = \tau_i^{2\epsilon} (M')' = M,\]
where the last two equalities are clear from the construction of $M'$
given in the proof of \cref{lem:order} \ref{item:lemma3} -- see \cref{fig:hard}.

Since there are no other cases, we have constructed the required involution, which finishes the proof.
\end{proof}

\medskip

Now, we have all necessary ingredients to prove \cref{theo:UnicellularLowGenera}.

\medskip

\begin{proof}[Proof of \cref{theo:UnicellularLowGenera}]
Let $\OOO$ be an orientation of all rooted maps and let us fix a positive integer $n$ and partitions $\mu, \nu \vdash n$ such that $\ell(\mu) + \ell(\nu) = n-3$. Thanks to \cref{prop:a-ki} we know that $\left(a_{\eta_\OOO}\right)^{(n)}_{\mu,\nu;2}$ is the polynomial in $\beta$ of the following form: 
\[  \left(a_{\eta_\OOO}\right)^{(n)}_{\mu,\nu;2}(\beta) := \sum_{M \in \widetilde{\M}^{(n)}_{\mu,\nu;2}} \beta^{\eta_\OOO(M)} = a + b \beta + c \beta^2,\]
where $a,b,c$ are nonnegative integers.
Moreover, \cref{lem:2handles} gives the following equality 
\[ \sum_{M \in \widetilde{\M}^{(n)}_{\mu,\nu;2}} \beta^{\eta_\OOO(M)} = \sum_{M \in \widetilde{\M}^{(n)}_{\mu,\nu;2}} \beta^{2-\eta_\OOO(M)}.\]
Hence 
\[\left(a_{\eta_\OOO}\right)^{(n)}_{\mu,\nu;2}(\beta) = a + b \beta + a \beta^2.\]
Finally, \cref{lem:LaCroixPolynomials} says that $\left(a_{\eta_\OOO}\right)^{(n)}_{\mu,\nu;2}(-1) = 0$. Thus, there exists a positive integer $\left(\widetilde{a_{\eta_\OOO}}\right)^{(n)}_{\mu,\nu;2}$ such that
\[ \left(a_{\eta_\OOO}\right)^{(n)}_{\mu,\nu;2}(\beta)
=\left(\widetilde{a_{\eta_\OOO}}\right)^{(n)}_{\mu,\nu;2}\cdot (1+\beta)^2. \]
\cref{cor:EtaDegree} asserts that for all positive integers $n$ and
partitions $\mu,\nu \vdash n$ such that $\ell(\mu) + \ell(\nu) > n -
3$ the set $\widetilde{M}^{(n)}_{\mu,\nu;i}$ is empty for $i \geq 2$.
Thus, \cref{prop:a-ki} and \cref{lem:LaCroixPolynomials} give that for
any positive integer $n$, partitions $\mu,\nu \vdash n$ such that
$\ell(\mu) + \ell(\nu) > n - 3$, and a nonnegative integer $i$, there exists a positive integer $\left(\widetilde{a_{\eta_\OOO}}\right)^{(n)}_{\mu,\nu;i}$ such that
\[ \left(a_{\eta_\OOO}\right)^{(n)}_{\mu,\nu;i}(\beta) =\left(\widetilde{a_{\eta_\OOO}}\right)^{(n)}_{\mu,\nu;i}(1+\beta)^i.\]
Plugging it into \cref{eq:HPolynomialForm}, one has the following expression
\[ \left(H_{\eta_\OOO}\right)_{\mu,\nu}^{(n)}(\beta) = \sum_{0 \leq i \leq [g/2]}\left(\widetilde{a_{\eta_\OOO}}\right)^{(n)}_{\mu,\nu;i}\beta^{g-2i}(\beta+1)^i,\]
where $g = n+1 - \left(\ell(\mu)+\ell(\nu)\right) \leq 4$, and the above equation involves at most three coefficients $\left(\widetilde{a_{\eta_\OOO}}\right)^{(n)}_{\mu,\nu;i}$, where $i \in \{0,1,2\}$.
Notice that \cref{lem:expansion} gives a similar expression for quantities $h_{\mu,\nu}^{(n)}(\beta)$:
\[ h_{\mu,\nu}^{(n)}(\beta) = \sum_{0 \leq i \leq [g/2]}a_{\mu,\nu;i}^{(n)} \beta^{g-2i}(\beta+1)^i,\]
and, again, for $g \leq 4$, it involves at most three coefficients $a_{\mu,\nu;i}^{(n)}$, where $i \in \{0,1,2\}$. 
By \cref{theo:UnhandledOnefaceMaps} we know that
\[ h_{\mu,\nu}^{(n)}(\beta) = \left(H_{\eta_\OOO}\right)_{\mu,\nu}^{(n)}(\beta)\]
for $\beta \in \{-1,0,1\}$. So for a fixed positive integer $n$ and
partitions $\mu,\nu \vdash n$ such that $\ell(\mu) + \ell(\nu) \geq n
- 3$ it gives rise to a system of three equations with at most three
indeterminates. It is easy to check that this system is non-degenerate. Thus, it has a unique solution. In other words $a_{\mu,\nu;i}^{(n)} = \left(\widetilde{a_{\eta_\OOO}}\right)^{(n)}_{\mu,\nu;i}$ for all nonnegative integers $i$, which finishes the proof.
\end{proof}

\section{Concluding remarks}
\label{sec:conclude}

We are going to finish this paper by posing some natural questions and
remarks related to the combinatorial side of $b$-conjecture.

\subsection{Removing edges in a different order}
Note that the function $\eta$ given by \cref{def:MeasureOfNonOrient}
is built recursively by the root-deletion procedure, which gives a
natural order on the set of edges of a given map $M$. One can wonder
if there are some other, natural ways to define an order on $E(M)$
which have chances to give an affirmative answer to \cref{conj:BConj}
using the statistic $\eta$  as in \cref{def:MeasureOfNonOrient}, but with
respect to the considered order. For instance, the author of this paper together
with F\'eray and \'Sniady studied \cite{DolegaFeraySniady2014} a problem of
understanding a combinatorial structure of Jack characters, already
mentioned in \cref{subsec:related}. We proved that a
similarly defined ``measure of non-orientability'', but considered with
respect to the uniform random order on $E(M)$, has many desired properties. Chapuy
and the author of this paper constructed in \cite{ChapuyDolega2015}
a certain directed graph associated with a bipartite quandrangulation
$\qq$ (that is a map with all faces of degree $4$), called the Dual
Exploration Graph (DEG, for short), which is visiting all faces of
$\qq$ in some particular order. Since maps (not necessarily bipartite)
with $n$ edges are in a natural bijection with bipartite
quadrangulations with $n$ faces and edges of a given map correspond
to faces of an associated quadrangulation, DEG defines also an order
on the set of edges of a given map and La Croix suggested
\cite{LaCroixPrivate} to use this order to define a measure of
non-orientability $\eta$ with respect to it. We did not study
combinatorial properties of the statistic $\eta$ defined in this way and we leave open the problem wether it gives the correct answer for $b$-conjecture.

\subsection{Unhandled maps and evaluation at \texorpdfstring{$b=-1$}{b=-1}}

\cref{cor:EtaDegree} suggests that unhandled maps are of special
interest: indeed, for any measure of non-orientability $\eta$ and for
any partitions $\mu,\nu,\tau$ of a positive integer $n$, the
top-degree coefficient in the polynomial
$\left(H_\eta\right)_{\mu,\nu}^\tau(\beta)$ is given by unhandled maps
of type $(\mu,\nu;\tau)$. In particular this top-degree part does not
depend on the choice of $\eta$, but it does depend on the order of
edges we are removing from a given map. Moreover, \cref{eq:Basis}
ensures that in the case of one-part partition $\tau = (n)$, the
top-degree coefficient in the polynomial
$\left(H_\eta\right)_{\mu,\nu}^\tau(\beta)$ coincides, up to a sign, with the evaluation of this polynomial in $\beta = -1$. We can also prove that for any partitions $\mu,\nu$ of a positive integer $n \geq 2$ and for any $l \in [n-1]$ the top-degree coefficient in the polynomial  $\left(H_{\eta_\OOO}\right)_{\mu,\nu}^{(n-l,l)}(\beta)$ coincides with the evaluation of this polynomial in $\beta = -1$ for any set $\OOO$ of orientations (in fact, the main idea of the proof in the case when $\ell(\mu) + \ell(\nu) \geq n - 3$ was given in \cref{lem:order}, and the general case is almost the same). Thus, there are natural questions:

\begin{question}
\label{question}
Is it true that for any measure of non-orientability $\eta$, for any positive integer $n$, and for any partitions $\mu, \nu, \tau \vdash n$ the following equality holds true:
\[ \left(H_\eta\right)_{\mu,\nu}^\tau(-1) = \#\left(\widetilde{M_\eta}\right)_{\mu,\nu;0}^\tau?\]
\end{question}

\begin{question}
\label{question2}
Is it true that for any measure of non-orientability $\eta$, for any positive integer $n$, and for any partitions $\mu, \nu, \tau \vdash n$ the following equality holds true:
\[ \left(H_\eta\right)_{\mu,\nu}^\tau(-1) = h_{\mu,\nu}^\tau(-1)?\]
\end{question}

Another interesting direction of the research initiated in this paper
is an understanding of the combinatorial structure of an unhandled map
of type $(\mu,\nu;\tau)$ for arbitrary partitions $\mu,\nu,\tau$ of a
positive integer $n$. Note that the set of unhandled maps of a given
type is not rooted invariant. However, \cref{prop:marginalsum}
together with \cref{ptheo:marginalSumEta} imply that unicellular
unhandled maps with the black vertex distribution $\mu$ and the white
vertex distribution $\nu$ are in a bijection with \emph{orientable
  maps} with the black vertex distribution $\mu$, the white vertex
distribution $\nu$, and the \emph{arbitrary} face distribution, which
clearly \emph{are rooted invariant} (one can even use a proof of
\cref{ptheo:marginalSumEta} to construct such a bijection recursively). Therefore one can ask the following question:

\begin{question}
Is it true that for any given type $(\mu,\nu;\tau)$ of unhandled maps,
there exists some class of maps (orientable?), with the black vertex
distribution $\mu$, the white vertex
distribution $\nu$, and possibly some additional data (labeling faces?), which is rooted invariant, and which is in some natural bijection with the corresponding set of unhandled maps? 
\end{question}

Finally, one can refine \cref{question} by asking:

\begin{question}
Is it true that for any measure of non-orientability $\eta$, for any positive integer $n$, and for any partitions $\mu, \nu, \tau \vdash n$ the following equality holds true:
\begin{align*} \left(H_\eta\right)_{\mu,\nu}^\tau(\beta) = \begin{cases}\sum_{0 \leq i \leq [g/2]} \widetilde{a_{\mu,\nu;i}^\tau} \beta^{g-2i}(\beta+1)^i &\text{ for } \ell(\mu)+\ell(\nu)+\ell(\tau) \leq 2 + n, \\
0 &\text{ otherwise }; \end{cases}
\end{align*}
where $g := 2 + n - \left(\ell(\mu)+\ell(\nu)+\ell(\tau)\right)$ and $2^i \widetilde{a_{\mu,\nu;i}^\tau}  =\#\left(\widetilde{M_\eta}\right)_{\mu,\nu;i}^\tau$?
\end{question} 

We leave all these questions wide open for future research.

\section*{Acknowledgments}

I thank Valentin F\'eray and Piotr \'Sniady for several years of collaboration
on topics related to the current paper. I also thank Michael La Croix for a very interesting discussion concerning 
the $b$-Conjecture. English proofreading service was financed by
\emph{Narodowe Centrum Nauki}, grant number
2014/15/B/ST1/00064.

\bibliographystyle{amsalpha}

\bibliography{biblio2015}

\end{document}